\documentclass{amsart}
\usepackage{graphicx,amsfonts,amssymb,amsmath,amsthm,url}

\theoremstyle{plain} 
\newtheorem{theorem}             {Theorem}  [section]
\newtheorem{lemma}      [theorem]{Lemma}
\newtheorem{corollary}  [theorem]{Corollary}

\theoremstyle{definition}

\newtheorem{example}    [theorem]{Example}

\theoremstyle{remark}
\newtheorem{remark}              {Remark}

\setlength{\evensidemargin}{0in} \setlength{\oddsidemargin}{0in}
\textwidth=6.5 true in
\textheight=8 true in
\topmargin 0cm


\def\Gal{\operatorname{Gal}}
\def\sgn{\operatorname{sgn}}
\def\sym{\operatorname{sym}}
\def\Tr{\operatorname{Tr}}
\def\SL{\operatorname{SL}}
\def\GL{\operatorname{GL}}
\renewcommand{\Re}{\mathrm{Re}}
\renewcommand{\Im}{\mathrm{Im}}
\def\eps{\varepsilon}
\newcommand{\RestrictK}{$(k \geq 4)$ }
\def\otherN{N'}

\newcommand{\efrac}[2]{\genfrac{}{}{0pt}{}{#1}{#2}}

\begin{document}

\title[Stable averages
of central values of Rankin-Selberg L-functions]%
{Stable averages
  of central values of \\
  Rankin-Selberg L-functions: some new variants }

\author{Paul Nelson}

\begin{abstract}
  As shown by Michel-Ramakrishan (2007) and later generalized by
  Feigon-Whitehouse (2008), there are ``stable'' formulas for
  the average central L-value of the Rankin-Selberg
  convolutions of some holomorphic forms of fixed even weight and
  large level against a fixed imaginary quadratic theta series.
  We obtain exact finite formulas for twisted first moments of
  Rankin-Selberg L-values in much greater generality and prove
  analogous ``stable'' formulas when one considers either
  arbitrary modular twists of large prime power level or real
  dihedral twists of odd type associated to a Hecke character of
  mixed signature.
\end{abstract}
\maketitle

\tableofcontents

The special values $L(f,s)$ of $L$-functions attached to
automorphic forms $f$ are of fundamental arithmetic interest;
for instance, such values (often conjecturally) carry
information concerning the arithmetic of number fields (the
class number formula) and elliptic curves (the Birch and
Swinnerton-Dyer conjecture).  Motivated by this interest, a
basic problem in modern number theory is to study the behavior
of such values as $f$ traverses a family of automorphic forms.
Some typical problems of interest are to
\begin{enumerate}
\item show that $L(f,s)$ is non-vanishing for at
least one (or several) such $f$,
\item show that $L(f,s)$ satisfies a nontrivial upper bound in terms
  of $s$ and the conductor of $f$ (the subconvexity
  problem),
  and
\item study the (possibly twisted) moments of $f \mapsto
  L(f,s)$; such study has often served as technical input in
  approaches to the above two problems.
\end{enumerate}

In this paper we consider the family of Rankin-Selberg
$L$-values $L(f \otimes g, s)$ where $g$ is a fixed holomorphic
modular form on $\GL(2)/\mathbb{Q}$ and $f$ traverses a
family of holomorphic cusp forms of
fixed weight, level and nebentypus.  We are motivated by work of
Michel-Ramakrishnan \cite{MR07} and later Feigon-Whitehouse
\cite{FW08}, who show for certain dihedral forms $g$
arising from idele class characters on imaginary quadratic
fields that there
are \emph{finite} formulas for the twisted first moments of
central values $f \mapsto L(f \otimes g,\tfrac{1}{2})$ that
simplify considerably, reducing to just one or two terms, when
the level of the family to which $f$ belongs is taken
to be sufficiently
large.  One application of such finite formulas that would be
inaccessible with an inexact asymptotic formula is to
show that there exist
$f$ for which the algebraic part of $L(f \otimes
g,\tfrac{1}{2})$ is non-vanishing modulo a prime $p$.  The
present work stemmed from a desire to understand better the
scope and generality of such phenomena.  Because the methods of
\cite{MR07} and \cite{FW08} make essential use of the
restriction that $g$ be dihedral by invoking respectively the
Gross-Zagier formula and Waldspurger's formula, we wondered whether
the results obtained are likewise exclusive to dihedral $g$ or
if they extend to general modular forms $g$.  Our aim in this
paper is to show that they do in fact hold quite generally.

Before stating our own results, let us recall in more detail
the relevant results of Michel-Ramakrishnan \cite{MR07}:
\begin{enumerate}
\item[(I)] Let $-D$ be a negative odd fundamental discriminant, let $\Psi$ be
  a class group character of $\mathbb{Q}(\sqrt{-D})$, and let
  $g_\Psi$ be the weight $1$ theta series of level $D$ and nebentypus
  $\chi_{-D} = (-D |\cdot)$ attached to $\Psi$.
  Let $N$ be a rational prime that is inert in
  $\mathbb{Q}(\sqrt{-D})$,
  let $k$ be a positive even integer,
  and let $f$ traverse the set of arithmetically-normalized
  ($\lambda_f(1) = 1$)
  holomorphic newforms of weight $k$ on
  $\Gamma_0(N)$.
  Then there is a simple finite formula
  \cite[Thm 1]{MR07} for the
  twisted first moment of central $L$-values
  \begin{equation*}
    \sum_f \frac{L(f \otimes g_\Psi,\tfrac{1}{2}) }{
      \int_{\Gamma_0(N) \backslash \mathbb{H}} |f|^2
      y^{k} \, \frac{d x \, d y}{y^2}}
    \lambda_m(f),
  \end{equation*}
  where 
  $\lambda_m(f) m^{(k-1)/2}$ is the $m$th Fourier coefficient of $f$
  and $L(f \otimes g,s)$ is normalized so that it satisfies
  a functional
  equation
  under $s \mapsto 1-s$.
  We have spelled out the Petersson norm explicitly here because
  we shall
  use a different normalization later in the paper
  (see \eqref{eq:1}).
\item[(II)] Moreover, the formula in question
  becomes astonishingly simple in the so-called
  ``stable range'' $N > m D$, in which case all but one or two
  of its terms vanish;
  for instance, if $k \geq 4$ and $N > m D$ then one has
  \cite[Cor 1]{MR07}
  \begin{equation*}
    \frac{\Gamma(k-1)}{(4 \pi)^{k-1}}
    \sum_f \frac{L(f \otimes g_\Psi,\tfrac{1}{2}) }{
      \int_{\Gamma_0(N) \backslash \mathbb{H}} |f|^2
      y^{k} \, \frac{d x \, d y}{y^2}
      }
      \lambda_m(f)
      =   
    2 
    \frac{\lambda_m(g)}{m^{1/2}} L(\chi_{-D},1),
  \end{equation*}
  where $\lambda_m(g) m^{(l-1)/2}$ (with $\ell = 1$) is the $m$th Fourier coefficient
  of $g = g_\Psi$
  and $L(\chi_{-D},1) = \sum_{n \geq 1} \chi_{-D}(n)/n
  = 2 \pi h / (w D^{1/2})$ with $h$ the class number of
  $\mathbb{Q}(\sqrt{-D})$
  and $w$ the order of its unit group.
\end{enumerate}
As an application, the authors of \cite{MR07} derive some hybrid
subconvexity, non-vanishing and non-vanishing mod $p$ results
for $N$ and $D$ in certain ranges; while conceivably the subconvexity and
non-vanishing results could have also been derived with a
non-exact asymptotic formula having a $o(1)$ term, the
non-vanishing mod $p$ results relied crucially on the finiteness
of the formula.
Note also that while subconvex bounds for $L(f \otimes g,
\tfrac{1}{2})$
are known in generality
\cite{michel-2009} when either $f$ or $g$ is fixed,
the results of \cite{MR07} address the case that
$f$ and $g$ vary simultaneously while satisfying the constraint
$(k D)^\delta \ll N \ll D (k D)^{-\delta}$ for some fixed
$\delta > 0$.
Feigon-Whitehouse \cite{FW08} generalized many
of these results to the context of holomorphic Hilbert modular
forms of squarefree level $\mathfrak{N}$ over a totally real
number field $F$ averaged against a fixed theta series
associated to an idele class character on a CM extension $K/F$
under certain additional conditions such as that
$\mathfrak{N}$ be a squarefree product of primes
that are inert in $K$.

As indicated above, our aim in this paper is to prove analogues
of assertions (I) and (II)
for general (not necessarily dihedral)
holomorphic forms $g$ on $\GL(2)/\mathbb{Q}$.
To give a flavor for the results we obtain, we begin
by stating
one of the simplest (and to the author, most surprising) consequences.
Denote by $S_k(N,\chi)$ the space of
holomorphic cusp forms of
weight $k$,
level $N$ and nebentypus $\chi$,
and let $S_k(N) =S_k(N,\chi_0)$ where $\chi_0$ is the principal
character
of modulus $N$.
For $f = \sum a_n n^{(k-1)/2} q^n \in S_k(N,\chi)$
and $g = \sum b_n n^{(l-1)/2} q^n \in S_l(D,\eps)$
($q = \exp(2 \pi i z)$),
we define
\begin{equation}\label{eq:4}
  L(f \otimes g, s) = L(\chi \eps,2 s)
  \sum_{n \geq 1} \frac{a_n b_n}{n^s}
\end{equation}
for $\Re(s) > 1$ and in general by meromorphic continuation
\cite{Ra39,Se40,Ja72,Li79};
note that while this ad hoc definition has classical precedent
as in \cite{Li79}, it may differ by some bad Euler factors
from the canonical normalization when $f$ and $g$ are
newforms.
The critical line for
$L(f \otimes g, s)$
is $\Re(s) = 1/2$,
and $L(\chi \eps, s) = \sum_{n \geq 1} \chi(n) \eps(n) n^{-s}$
($\Re(s) > 1$) is the Dirichlet $L$-function;
note that if for instance $\chi_0$ is the principal character mod
$N$,
then $L(\chi_0 \eps,s)$ is $L(\eps,s)$ without the Euler factors
at $N$.
\begin{theorem}\label{thm:simple}
  Let $l$ be an even positive integer,
  let $k \geq 5$ be an odd positive integer
  such that $k > l$,
  and let $\chi$ be a primitive Dirichlet character of conductor
  $N$.
  Let $\mathcal{F}$ be an orthogonal basis of $S_k(N,\chi)$
  with respect to the Petersson inner product.
  Then for each fixed $g = \sum b_n n^{(l-1)/2} q^n \in S_l(1)$,
  we have
  \begin{equation}\label{eq:2}
    \frac{\Gamma(k-1)}{(4 \pi)^{k-1}}
    \sum_{f \in \mathcal{F}}
    \frac{
      L(f \otimes g,\tfrac{1}{2})
    }
    {
      \int_{\Gamma_0(N) \backslash \mathbb{H}} |f|^2
      y^{k} \, \frac{d x \, d y}{y^2}
    }
    = b_1 L(\chi,1).    
  \end{equation}
\end{theorem}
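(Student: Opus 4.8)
The plan is to turn the left side of \eqref{eq:2} into a single Fourier coefficient of an explicit holomorphic cusp form, via a Rankin--Selberg integral followed by a holomorphic projection. Write $\kappa=k-l$, which is odd and $\ge 1$; note that $S_k(N,\chi)\ne 0$ forces $\chi(-1)=(-1)^k=-1$, so $\chi$ is odd and $L(\chi,1)\ne 0$, and since both sides of \eqref{eq:2} are linear in $g$ one may assume $g$ is a Hecke eigenform with real Fourier coefficients. Let $E(z,s)$ denote the weight-$\kappa$ real-analytic Eisenstein series for $\Gamma_0(N)$ attached to the cusp $\infty$ with nebentypus $\chi$, normalized so that its $\Gamma_\infty$-term in the $\Gamma_\infty\backslash\Gamma_0(N)$-expansion is $\mathrm{Im}(z)^s$. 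Unfolding the pairing $\langle f,\,g\,E(\cdot,\bar s)\rangle$ against that expansion — the $x$-integral isolates the diagonal, the $y$-integral produces a $\Gamma$-factor — gives, for $\Re(s)$ large and then by meromorphic continuation,
\[
  \big\langle f,\; g\,E(\cdot,\bar s)\big\rangle
  \;=\; \frac{\Gamma(s+k-1)}{(4\pi)^{s+k-1}}\;
  \frac{L\!\left(f\otimes g,\ s+\tfrac{\kappa}{2}\right)}{L\!\left(\chi,\ 2s+\kappa\right)} .
\]
Specialising to $s=s_0:=\tfrac{1-\kappa}{2}$ — which is a non-positive integer because $\kappa$ is odd, and is exactly the centre of the functional equation of $E(z,s)$ — the $L$-argument becomes $\tfrac12$, the denominator becomes $L(\chi,1)$, and (using that $E(z,s)$ is regular at $s_0$) one obtains, with $\Psi:=g\cdot E(\cdot,s_0)$,
\[
  L\!\left(f\otimes g,\tfrac12\right)
  \;=\; \frac{(4\pi)^{(k+l-1)/2}}{\Gamma\!\big(\tfrac{k+l-1}{2}\big)}\; L(\chi,1)\; \langle f,\,\Psi\rangle .
\]

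Now divide by $\langle f,f\rangle$ and sum over $\mathcal F$. Since $g$ is cuspidal, $\Psi$ is a fixed weight-$k$, nebentypus-$\chi$ automorphic form decaying rapidly at the cusps, and by completeness of $\mathcal F$ in $S_k(N,\chi)$ (equivalently the reproducing kernel, equivalently the Petersson formula; no anomalous term arises as $k\ge 5$) the sum $\sum_{f\in\mathcal F}\langle f,\Psi\rangle/\langle f,f\rangle$ is the first Fourier coefficient of the holomorphic projection $\Psi^\flat:=\Pi_{\mathrm{hol}}(\Psi)\in S_k(N,\chi)$. Thus the left side of \eqref{eq:2} equals $\tfrac{\Gamma(k-1)}{(4\pi)^{k-1}}\cdot\tfrac{(4\pi)^{(k+l-1)/2}}{\Gamma((k+l-1)/2)}\,L(\chi,1)$ times $[q^1]\Psi^\flat$, and it remains to compute this coefficient.

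The heart of the proof is the identification of $\Psi^\flat$. The Maass--Shimura operator $\delta_j=\tfrac1{2\pi i}\big(\partial_z+\tfrac{j}{z-\bar z}\big)$ sends the weight-$j$ Eisenstein series $E(z,s)$ to a nonzero multiple of the weight-$(j+2)$ Eisenstein series $E(z,s-1)$; iterating $p:=\tfrac{\kappa-1}{2}$ times from the holomorphic weight-$1$ Eisenstein series $\mathcal E_1$ of level $N$ and nebentypus $\chi$ (which exists, $\chi$ being odd, and equals $E(z,0)$ in weight $1$) shows that $E(z,s_0)=c_\kappa\,\delta^{(p)}\mathcal E_1$ for an explicit $c_\kappa\ne 0$; in particular $E(z,s_0)$ is nearly holomorphic of depth $p$. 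Hence $\Psi=c_\kappa\,g\cdot\delta^{(p)}\mathcal E_1$ is nearly holomorphic of weight $k$, so its holomorphic projection is given by a local differential formula: the assignment $(g,\mathcal E_1)\mapsto\Pi_{\mathrm{hol}}(g\cdot\delta^{(p)}\mathcal E_1)$ is a covariant bilinear differential operator on pairs of holomorphic forms of weights $(l,1)$ valued in holomorphic forms of weight $l+1+2p=k$, and the space of such operators is one-dimensional, spanned by the $p$-th Rankin--Cohen bracket. Therefore $\Psi^\flat=c_\kappa c'_{k,l}\,[\,g,\mathcal E_1\,]_p$ for an explicit constant $c'_{k,l}$, and since in $[\,g,\mathcal E_1\,]_p=\sum_{a+b=p}(-1)^a\binom{l+p-1}{b}\binom{p}{a}D^ag\cdot D^b\mathcal E_1$ (with $D=q\,\tfrac{d}{dq}$) the factor $D^ag$ is supported in degrees $\ge1$ while $D^b$ kills the constant term of $\mathcal E_1$ for $b\ge1$, only the term $a=p,b=0$ contributes in degree $1$:
\[
  [q^1]\Psi^\flat \;=\; c_\kappa c'_{k,l}\,(-1)^p\, b_1\cdot [q^0]\mathcal E_1 \;=\;(\text{explicit constant})\cdot b_1 .
\]

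Assembling the pieces, the left side of \eqref{eq:2} equals $b_1\,L(\chi,1)$ times an explicit product of $\Gamma$-values, powers of $4\pi$, the constant $c_\kappa$, the Rankin--Cohen constant $c'_{k,l}$, and $[q^0]\mathcal E_1$; the assertion of the theorem is that this product equals $1$. The main obstacle is exactly this last bookkeeping: determining $c_\kappa$ (equivalently, the precise value of $E(z,s)$ at the centre of its functional equation) and $c'_{k,l}$, and checking the ensuing $\Gamma$-function identity. The hypothesis $k>l$ is what makes the mechanism run: it forces $\kappa\ge1$, so $E(z,s)$ has strictly positive weight, and together with $k$ odd and $l$ even it places $s_0$ at the centre of the functional equation as a non-positive integer — the configuration in which $E(z,s_0)$ is nearly holomorphic and the holomorphic projection of $g\cdot E(z,s_0)$ collapses to a single Rankin--Cohen bracket. (One could instead avoid nearly holomorphic forms by substituting the Fourier expansion of $E(z,s_0)$ into the holomorphic-projection integral term by term; then the contributions of its non-constant Fourier coefficients are the ``extra'' terms, and showing that they integrate to zero is where the input $k>l$ is spent.)
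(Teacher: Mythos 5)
Your route is genuinely different from the paper's. The paper obtains Theorem \ref{thm:simple} as the $m=1$, $D=1$ specialization of Theorem \ref{thm:finite-formula} (via Corollary \ref{thm:ex2}): one expands $\sum_n b_n \mathbb{E}[\overline{\lambda_1}\lambda_n]n^{-s}$ by the Petersson formula, applies the functional equation of the additive twists of $g$ (Voronoi) to the off-diagonal, and observes that the resulting sum over $n$ is restricted to $1\le n\le m\delta-1$, which is empty for $m=1$, while the $n=m\delta$ term dies because $S_{1/2}^1(0)=0$ for primitive $\chi$; only the diagonal $b_1L(\chi,1)$ survives. Your argument is instead the Gross--Zagier-style computation of the kernel of $f\mapsto L(f\otimes g,\tfrac12)$ as a holomorphic projection of $g\cdot E(\cdot,s_0)$, exploiting that $s_0=\tfrac{1-\kappa}{2}$ is a non-positive integer so that $E(\cdot,s_0)$ is nearly holomorphic and the projection collapses to a Rankin--Cohen bracket whose $q^1$-coefficient sees only $b_1$. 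The skeleton is sound: the unfolding identity, the location of $s_0$, the Maass--Shimura descent to the weight-one Eisenstein series (the constants $-(s+j)/4\pi$ never vanish along your iteration), and the degree-one coefficient computation of the bracket are all correct in structure, and the hypotheses $k$ odd, $l$ even, $k>l$ are spent exactly where you say.

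However, as written the proposal does not prove the theorem: it proves only that the left side equals $C\cdot b_1L(\chi,1)$ for a constant $C$ that you explicitly decline to evaluate, and the entire content of an exact identity like \eqref{eq:2} (and of its applications, e.g.\ non-vanishing mod $p$) is that $C=1$. This is not dismissible bookkeeping: $C$ aggregates (i) the value of $E_\kappa(z,s_0)$, where one must check that the second constant term $\varphi(s)y^{1-\kappa-s}$ does not contribute at the self-dual point $s_0$ (it does not, but only because $\varphi_{\infty\infty}\equiv 0$ for primitive $\chi$ --- an argument you need and do not give); (ii) the constant in $\Pi_{\mathrm{hol}}(g\cdot\delta^{(p)}\mathcal{E}_1)=c'_{k,l}[g,\mathcal{E}_1]_p$, for which your appeal to one-dimensionality of covariant bilinear differential operators is not yet a proof, since one must first show the projection \emph{is} such an operator (e.g.\ via the explicit expansion of $\delta^{(p)}$ in powers of $y^{-1}$ and termwise holomorphic projection); and (iii) the constant term of $\mathcal{E}_1$. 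Two smaller points: $\sum_{f}\langle f,\Psi\rangle/\langle f,f\rangle$ is not basis-independent and is not $a_1(\Pi_{\mathrm{hol}}\Psi)$ --- you need the factor $\overline{\lambda_1(f)}$ (equivalently, a basis normalized by $\lambda_1(f)=1$), exactly as in the paper's definition \eqref{eq:3}; and the unfolding produces $\sum\overline{a_n}b_n n^{-s}$ rather than $\sum a_nb_nn^{-s}$, so the reduction to real-coefficient eigenforms $g$ and the ensuing conjugations must be tracked against the unconjugated definition \eqref{eq:4}.
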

For example, Theorem \ref{thm:simple}
applies when $g$ is the modular discriminant
of weight $12$ and $f$ traverses any space of cusp forms
of weight $13$ and primitive nebentypus $\chi$.
It shows immediately that for some $f$ as above,
the algebraic part of $L(f \otimes g,\tfrac{1}{2})$
is non-vanishing modulo any prime of $\overline{\mathbb{Q}}$
for which that of $L(\chi,1)$ is non-vanishing.
If one postulates the nonnegativity
of $L(f  \otimes g, \tfrac{1}{2})$
then Theorem \ref{thm:simple} gives a hybrid subconvex bound
for $N,\chi,k,l$ as above
provided that $(k-l) \gg (k N)^{1+\delta}$ for some $\delta >
0$.

Our main findings may be summarized as follows.
\begin{itemize}
\item When $g \in S_l(D,\eps)$ is an arbitrary fixed holomorphic
  cusp form of squarefree level $D$, we obtain finite formulas
  along the lines of assertion (I) above for the twisted first
  moments of $S_k(N,\chi) \ni f \mapsto L(f \otimes g, s)$
  whenever
  $k > l$,
  $2 s$ is an integer of the same parity as $k \pm l$, and
  $1 - ( k-l)/2 \leq s \leq ( k-l)/2$ (Theorem
  \ref{thm:finite-formula}).
  Under the above conditions, $s$ is a critical value of $L(f \otimes g, s)$
  in the sense of Deligne.
  When $s = 1/2$, these condition hold if and only if
  $k > l$ and
  the parities of $k$ and $l$ are opposite.
\item We find
  that assertion (II) above extends without essential
  modification to arbitrary cusp forms $g$ with primitive
  nebentypus and squarefree level (Theorem
  \ref{thm:real-simplification}, Remark \ref{rmk:why-is-this-theorem-relevant}).
  For instance, while the
  methods of \cite{MR07} and \cite{FW08} apply only to dihedral
  forms coming from idele class characters on imaginary
  quadratic fields, we obtain analogous results for the
  (holomorphic) theta series attached to finite order mixed
  signature idele class characters on real quadratic fields
  (Theorem \ref{thm:real-concrete}).
\item Assertion (II) above says that the finite formula
  for the twisted first moment
  simplifies considerably when
  the level of
  the varying form $f$ is sufficiently large compared to that
  of the fixed form $g$.
  We observe a new phenomenon: such simplification occurs also when
  the level of $f$ is sufficiently divisible by prime divisors
  of the level of $g$ (Theorem \ref{thm:stab-vert-sense}).
  Note that in the works \cite{MR07}
  and \cite{FW08}, the levels of $f$ and $g$ are always taken
  to be relatively prime.
\end{itemize}
Our analysis makes use of the results and method of Goldfeld-Zhang
\cite{GoZh99}, who compute the kernel of the linear map $f
\mapsto L(f \otimes g, s)$ in some generality.  They suggest in
their paper that their results may have some applications for
special values of $L(f \otimes g,s)$, and we consider our work
in that spirit.

Our paper is organized as follows.
In \S\ref{sec:main-result}
we state our general result, which requires a fair amount of
notational baggage;
the reader is encouraged to skim this section on a first reading
and to look instead at
\S\ref{sec:examples}, in which some simple but representative
examples are spelled out.
In the remainder of \S\ref{sec:results}
we report on some numerical checks of our formulas
and describe some of the applications mentioned above.
In \S\ref{sec:proofs} we give proofs.

\section{Results}\label{sec:results}

\subsection{Main result}\label{sec:main-result}
In this section we state our main result, from which all others
shall follow; we spell out some special
cases that require less notational overhead in \S\ref{sec:examples}.
Throughout this paper we let
$k,l,N,D$ be positive integers
and 
$\chi$ mod $N$, $\eps$ mod
$D$ Dirichlet characters such that
\begin{equation}\label{eq:assumptions}
  \text{$k \geq 4$, $D$ is squarefree,
    $\chi(-1) = (-1)^k$,
    and $\eps(-1) = (-1)^l$.}
\end{equation}
We expect that our main results
(Theorem \ref{thm:eta-value} and its corollaries)
hold under the weaker condition $k \geq 2$
(the restriction to $k \geq 4$ is discussed in
\S\ref{sec:some-results-goldf-2}).

The group $\GL_2(\mathbb{R})^+$ acts by fractional linear
transformations
on the upper half-plane $\mathbb{H} = \{x + i y : y > 0\}$
in the usual way.
Recall the weight $k$ slash operator
on functions $f: \mathbb{H} \rightarrow \mathbb{C}$:
for $\alpha = \left(
  \begin{smallmatrix}
    a&b\\
    c&d
  \end{smallmatrix}
\right) \in \GL_2(\mathbb{R})^+$, the function $f|_k \alpha$
sends $z$ to $\det(\alpha)^{k/2} (c z + d)^{-k} f(\alpha z)$.
The space $S_k(N,\chi)$ of holomorphic cusp forms
of weight $k$, level $N$ and nebentypus $\chi$ consists of holomorphic
functions $f : \mathbb{H} \rightarrow \mathbb{C}$
that satisfy
\begin{equation*}
  f|_k \gamma = \chi(d) f
  \quad \text{ 
    for all }
  \gamma =
  \begin{bmatrix}
    * & * \\
    * & d
  \end{bmatrix}
  \in \Gamma_0(N) = \SL_2(\mathbb{Z}) \cap
  \begin{bmatrix}
    \mathbb{Z}  & \mathbb{Z}  \\
    N \mathbb{Z}  & \mathbb{Z} 
  \end{bmatrix}
\end{equation*}
and vanish at the cusps of $\Gamma_0(N)$;
the space $S_l(D,\eps)$ is defined analogously.
For $g \in S_l(D,\eps)$
we write
\begin{equation}
  g(z) = \sum_{m = 1}^\infty
  b_m m^{(l-1)/2} q^m,
  \quad q = e^{2 \pi i z},
\end{equation}
so that the Fourier coefficient $b_m$ of $g$ is normalized so
that the Deligne bound reads $|b_p/b_1| \leq 2$ when $g$ is a
newform.  The cusps of $\Gamma_0(D)$ are indexed by the
factorizations of $D$ as a product $D = \delta \delta '$ of
positive integers $\delta$ and $\delta '$.  The scaling matrix
for the cusp indexed by $\delta$ is
\begin{equation}
  W_\delta =
  \begin{bmatrix}
    * & * \\
    \delta  & \delta '
  \end{bmatrix}
  \begin{bmatrix}
    \delta  &  \\
    & 1
  \end{bmatrix}
  \quad \text{ with }
  \begin{bmatrix}
    * & * \\
    \delta  & \delta '
  \end{bmatrix} \in \SL_2(\mathbb{Z}).
\end{equation}
The matrix $W_\delta$ is uniquely determined up to
left-multiplication by $\Gamma_0(D)$.
We write
\begin{equation}
  g| W_\delta (z) =
  \sum_{m = 1}^{\infty}
  b_m^\delta m^{(l-1)/2} q^m
\end{equation}
for the Fourier coefficients
of $g$ at the cusp indexed by $\delta$,
so that in particular $b_m^1 = b_m$.
This notation for the Fourier coefficients of $g$
at the cusps of $\Gamma_0(D)$
will be in effect throughout the paper.
In the special case that $\eps$ is primitive
and $g$ is a normalized newform, Atkin-Li \cite{MR508986}
obtained a formula for the coefficients $b_n^D$,
which we collect here for convenience:
\begin{theorem}\label{thm:eta-value}
  Suppose that $\eps$ is primitive and that $g = \sum_{n \geq 1}
  b_n n^{\frac{l-1}{2}} q^n \in S_l(D,\eps)$ is a
  newform with $b_1 = 1$.
  Define the Gauss sum $\tau(\eps)$
  as in \eqref{eq:14}.
  Then $|b_D| = 1$
  and
  \begin{equation}
    b_n^D = \eps(-1) \frac{\tau(\eps)}{D^{1/2}} \overline{b_D
      b_n}
    \quad \text{ for all } n \geq 1.
  \end{equation}
\end{theorem}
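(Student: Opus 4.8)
The numbers $b_n^D$ record, up to the conjugate coefficients $\overline{b_n}$, the pseudo-eigenvalue of the Fricke involution acting on the newform $g$, so the plan is in three steps: reduce $\{b_n^D\}_{n\ge 1}$ to a single scalar, evaluate that scalar, and pin down its absolute value. First I would reconcile the scaling matrix $W_D$ (the case $\delta=D$, $\delta'=1$ of the general $W_\delta$) with the Fricke involution $\eta_D=\bigl(\begin{smallmatrix}0 & -1\\ D & 0\end{smallmatrix}\bigr)$: up to the width normalization built into $\bigl(\begin{smallmatrix}\delta & \\ & 1\end{smallmatrix}\bigr)$ and an element of $\Gamma_0(D)$ of trivial nebentypus, $g\,|\,W_D$ agrees with $g\,|_l\,\eta_D$. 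Since $D$ is squarefree and $\eps$ is primitive, $\eta_D$ carries $S_l(D,\eps)$ isomorphically onto $S_l(D,\overline{\eps})$ and intertwines the Hecke operators $T_p$ $(p\nmid D)$ and $U_p$ $(p\mid D)$ with the corresponding operators on $S_l(D,\overline{\eps})$ up to the standard twisting scalars; hence, by multiplicity one,
\[
  g\,|\,W_D=\eta(g)\,\overline{g},\qquad \overline{g}:=\sum_{n\ge 1}\overline{b_n}\,n^{(l-1)/2}q^n\in S_l(D,\overline{\eps}),
\]
for some scalar $\eta(g)$. Comparing $q$-expansions gives $b_n^D=\eta(g)\,\overline{b_n}$ for all $n$, with $\eta(g)=b_1^D$ because $b_1=1$, so the theorem reduces to the two assertions $|b_D|=1$ and $\eta(g)=\eps(-1)\,\tau(\eps)\,D^{-1/2}\,\overline{b_D}$.

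Both of these I would extract from the local structure of $g$ at the primes $p\mid D$, which is also where the computational work sits. For each $p\mid D$ one has $p\,\|\,D$ with $\eps$ ramified at $p$ of conductor exactly $p$, so the local component $\pi_{g,p}$ is a ramified principal series $\pi(\mu_p,\nu_p)$ with $\mu_p$ unramified and $\nu_p$ ramified; consequently the $U_p$-eigenvalue of $g$ is $\mu_p(p)\,p^{(l-1)/2}$ with $|\mu_p(p)|=1$, whence $|b_p|=1$ and $|b_D|=\prod_{p\mid D}|b_p|=1$ by multiplicativity of the Fourier coefficients. The Fricke involution $\eta_D$, being (up to positive scalars) the product of the Atkin--Lehner involutions at the $p\mid D$, acts on $g$ through the product of the corresponding local pseudo-eigenvalues; a short computation with the local $\eps$-factor --- equivalently, with the Gauss sum of $\eps$ at $p$ --- evaluates the $p$-component as $\eps_p(-1)\,\tau(\eps_p)\,p^{-1/2}\,\overline{b_p}$, and multiplying over $p\mid D$, using $\prod_{p\mid D}\eps_p(-1)=\eps(-1)$, $\prod_{p\mid D}p^{1/2}=D^{1/2}$, and the factorization of $\tau(\eps)$ into the local Gauss sums of $\eps=\prod_{p\mid D}\eps_p$, gives $\eta(g)=\eps(-1)\,\tau(\eps)\,D^{-1/2}\,\overline{b_D}$, as asserted.

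The same constant can also be obtained by the classical global Gauss-sum trick, which I would run if one prefers to stay within the $q$-expansion formalism: primitivity of $\eps$ gives $\sum_{a\bmod D}\overline{\eps}(a)\,e^{2\pi i na/D}=\eps(n)\,\tau(\overline{\eps})$ (vanishing unless $(n,D)=1$), so summing the transforms of $g$ under the translates $z\mapsto z+a/D$ weighted by $\overline{\eps}(a)$ and then applying $\eta_D$ expresses $g\,|\,W_D$ as a fixed multiple of $\sum_{(n,D)=1}\overline{\eps}(n)\,b_n\,n^{(l-1)/2}q^n$; since $\overline{b_n}=\overline{\eps}(n)\,b_n$ for $(n,D)=1$ (from $\overline{b_p}=\overline{\eps}(p)\,b_p$ for $p\nmid D$ and the Hecke recursion), this is the prime-to-$D$ part of $\overline{g}$, hence --- by the multiplicity-one fact already invoked --- a multiple of $\overline{g}$; reading off the coefficient of $q$ and using $\tau(\eps)\,\overline{\tau(\eps)}=D$ and $\tau(\overline{\eps})=\eps(-1)\,\overline{\tau(\eps)}$ recovers $\eta(g)$. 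In both routes I expect the only genuine obstacle to be the bookkeeping of normalizations --- tracking the half-integral powers of $D$ produced by the weight-$l$ slash against the $n^{(l-1)/2}$ convention, and the sign $\eps(-1)$ coming from $\bigl(\begin{smallmatrix}0 & -1\\ D & 0\end{smallmatrix}\bigr)^2=-D\cdot I$ and $g\,|_l\,(-I)=\eps(-1)g$ --- rather than any conceptual point.
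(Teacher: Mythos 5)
Your proposal is correct and follows essentially the same route as the paper: identify $W_D$ with the Fricke involution up to an element of $\Gamma_1(D)$, use multiplicity one to reduce $\{b_n^D\}$ to a single pseudo-eigenvalue, and evaluate that scalar as $\eps(-1)\tau(\eps)D^{-1/2}\overline{b_D}$ with $|b_D|=1$. The only difference is that the paper simply cites Atkin--Li and Iwaniec/Iwaniec--Kowalski for the value of the pseudo-eigenvalue and for $|b_D|=1$ (while flagging the sign issue from Atkin--Li's use of $-W_D$), whereas you sketch self-contained derivations of those cited facts.
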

\begin{proof}
  One has $W_D \in \Gamma_1(D) \left(
    \begin{smallmatrix}
      0  &-1\\
      D& 0
    \end{smallmatrix}
  \right)$, so the multiplicity-one theorem implies $b_n^D =
  \overline{\eta(g) b_n}$
  for some scalar $\eta(g)$; according to \cite[Thm
  6.29]{Iw97}, \cite[Prop 14.15]{MR2061214} and
  \cite[6.89]{Iw97}, we have $\eta(g) = \tau(\overline{\eps})
  b_D D^{-1/2}$.
  Using that $\overline{\tau(\overline{\eps})} =
  \eps(-1) \tau(\eps)$, we obtain the claimed formula.  Note that Atkin-Li
  actually consider the operator $h \mapsto h|(-W_D) = (-1)^l
  h|W_D$, so one must take care in citing their results.
\end{proof}

It will be convenient to define a scaled Petersson inner product
on $S_k(N,\chi)$ by the formula
\begin{equation}\label{eq:1}
  (f,g) =
  \frac{(4 \pi)^{k-1}}{\Gamma(k-1)}
  \int_{\Gamma_0(N) \backslash \mathbb{H}} \overline{f(z)} g(z)
  y^{k} \, \frac{d x \, d y}{y^2}.
\end{equation}
By abuse of notation, we write $\sum_{f \in S_k(N,\chi)}$ for the
sum
over all $f$ in any fixed orthogonal basis of $S_k(N,\chi)$
with respect to the inner product (\ref{eq:1}).

Recall the definition (\ref{eq:4})
of $L(f \otimes g,s)$.
Our basic object of study is
for any $k,N,\chi$ as above, $m \in \mathbb{N}$
and $g \in S_l(D,\eps)$ the twisted first moment
\begin{equation}\label{eq:3}
  \mathcal{M}_s(k,N,\chi,m,g)
  :=
  \sum_{f \in S_k(N,\chi)}
  \frac{L(f \otimes g,s)}{
    (f,f)
  }
  \overline{\lambda_m(f)},
\end{equation}
where $\lambda_m(f) m^{(k-1)/2}$ is the $m$th
Fourier coefficient of $f$.
Note that
the definition (\ref{eq:3}) is independent of
the choice of orthogonal basis
of $S_k(N,\chi)$.
Letting $S_k^\#(N,\chi)$ denote the subspace
of newforms in $S_k(N,\chi)$ and
$\sum_{f \in S_k^\#(N,\chi)}$ the sum over $f$ in an orthonormal
basis thereof, we similarly define
\begin{equation}\label{eq:5}
  \mathcal{M}_s^\#(k,N,\chi,m,g)
  :=
  \sum_{f \in S_k^\#(N,\chi)}
  \frac{L(f \otimes g,s)}{
    (f,f)
  }
  \overline{\lambda_m(f)}.
\end{equation}

To state our formula for these moments, we must introduce some
notation.
For any integers $a$ and $b$, let $(a,b)$ denote
their greatest common divisor and $[a,b]$ their least common multiple.
For $s \in \mathbb{C}$, let $|.|^s
: \mathbb{N} \rightarrow \mathbb{C}$ denote the multiplicative
function $n \mapsto n^s$.  For a function $\xi : \mathbb{N}
\rightarrow \mathbb{C}$ we let
$\sigma[\xi]$ denote its convolution with the constant function
$1$,
so that
\begin{equation}\label{eq:6}
  \sigma[\xi](n) = \sum_{d | n} \xi(d).
\end{equation}
Here and always the sum is taken over the positive divisors $d$ of $n$.
In the special case that
$\xi$ is a product of a Dirichlet character
with some complex power of $|.|$,
it will be convenient to denote by $\sigma[\xi](0)$ the
value at $s = 0$ of the meromorphic continuation
of the function
$s \mapsto \sigma[\xi |.|^{-s}](0)$,
for which the series definition applies
when $\Re(s) \gg 1$.
For example, we have $\sigma[\chi \eps |.|^{-s}](0) = L(\chi \eps,s)$
for any $s \in \mathbb{C}$.
We extend $\sigma[\xi]$ to negative integers $n$
via $\sigma[\xi](n) = \sigma[\xi](|n|)$.

Now suppose that $\xi$ is a Dirichlet character.
We let $L_N(\xi,s)$ denote the product of its Euler factors
at primes dividing $N$
and $L^N(\xi,s)$ the product of the rest of its Euler factors,
so that for $\Re(s) > 1$,
\[
L(\xi,s) = \sum_{n \geq 1}
\frac{\xi(n)}{n^s}
= L_N(\xi,s) L^N(\xi,s),
\quad L_N(\xi,s)
= \prod_{p|N} (1 - \xi(p) p^{-s})^{-1}.
\]
If the modulus of $\xi$ factors as $\prod m_i$ for some
pairwise relatively
prime positive integers $m_i$, we may write $\xi = \prod
\xi_{m_i}$
where $\xi_{m_i}$ has modulus $m_i$.
For example, our character $\eps$ of modulus $D$
factors as $\eps = \eps_{\delta } \eps_{\delta '}$ for any
factorization $D = \delta \delta '$ and also as
a product $\prod_{p|D}
\eps_p$
over the prime divisors of $D$.  If $\xi$ is primitive of conductor $m$, let
\begin{equation}\label{eq:14}
  \tau(\xi) = \sum_{a \in (\mathbb{Z}/q)^*} \xi(a) e_m(a)
\end{equation}
denote its Gauss sum;
here and always $e_m : \mathbb{Z}/m \rightarrow S^1$ is the
additive character
\begin{equation*}
  e_m(a) = e^{2 \pi i a /m}.
\end{equation*}

Recall that our fixed cusp form $g$ has squarefree level $D$.
Let $\delta$ be a positive divisor of $D$
and $\delta ' = D / \delta$ its complement in $D$.
To each such $\delta$ we associate
the primitive character $\xi$ of conductor $q$
that induces the character $\chi \eps_{\delta '}$
mod $[N, \delta ']$;
to keep the notation uncluttered,
we suppress the dependence on $\delta$
of $\xi$ and $q$.
We adopt the convention that $\xi(0) = 1$ if $q = 1$
and $\xi(0) = 0$ if $q \neq 1$.
For each nonzero integer $A$ we define a factorization
$A = A_1 A_2$ where $0 < A_1 | q^\infty$
(i.e., $A_1$ is a positive product of divisors of $q$)
and $(A_2,q) = 1$;
for $A = 0$, we take $A_1 = 1$ and $A_2 = 0$.
Note that such factorizations depend upon $q$, and hence upon
$\delta$,
but we again suppress this dependence.
Our $\delta$-dependent notation may  be summarized
as
\begin{align*}
  &\delta \delta ' = D,
  \quad \eps = \eps_{\delta } \eps_{\delta '}
  \text{ with }
  \eps_\delta \text{ mod } \delta,
  \eps_{\delta'} \text{ mod } \delta', \\
  &\xi \text{ mod } q \text{ primitive } \rightsquigarrow
  \chi \eps_{\delta '} \text{ mod } [ N, \delta '], \\
  &\xi(0) = 1 \text{ if } q = 1 \text{ and } 0 \text{ otherwise,} \\
  &0 \neq A = A_1 A_2 \text{ with } 0 < A_1 | q^\infty,
  (A_2,q) = 1, \\
  &0 = A = A_1 A_2 \text{ with } A_1 = 1, A_2 = 0.
\end{align*}
For example, the statement of Theorem \ref{thm:finite-formula}
involves an integer $M := [N, \delta ']_2$.
This notation means that $M$ is the largest prime-to-$q$ divisor of
the least common multiple of $N$ and $\delta ' = D/\delta$.
\begin{theorem}\label{thm:finite-formula}
  Let $k,N,\chi,l,D,\eps$ satisfy
  our usual assumptions \eqref{eq:assumptions},
  and let $s \in \mathbb{C}$.
  Suppose that $k > l$,
  that
  $2s$ is an integer of the same parity as $k \pm l$,
  and that $1 - (k-l)/2 \leq s \leq (k-l)/2$.
  Then for each $g \in S_l(D,\eps)$
  and $m \in \mathbb{N}$, we have
  \begin{equation}\label{eq:7}
    \mathcal{M}_s(k,N,\chi,m,g)
    = L(\chi \eps, 2 s)
    \frac{b_m}{m^s}
    + 2 \pi i^{-k}
    \sum_{\delta|D}
    T_s^\delta \mathop{\sum_{n=1}^{m \delta}}_{N_1 | (m \delta - n)_1 q}
    \frac{b_n^\delta }{n^{1-s}}
    I_s \left( \frac{m \delta }{n} \right)
    S_s^\delta(m \delta - n),
  \end{equation}
  where we set $M := [N, \delta ']_2$ and
  \begin{align*}
    T_s^\delta
    &=
    \left( \frac{\delta }{4 \pi ^2 } \right) ^{\tfrac{1}{2} - s}
    \frac{i^l \chi(\delta) \tau(\xi)
      (\eps_\delta |.|^{-2s})(q)
    }{
      \eps_\delta(\delta ')
    }
    (\eps_\delta \xi |.|^{1-2s})(M)
    \\
    S_s^\delta(x)
    &=
    (\eps_\delta |.|^{1-2s})(x_1)
    \overline{\xi}(x_2)
    \sum_{e|(M,x_2)}
    \mu \left( \frac{M}{e} \right)
    \frac{e}{M}
    \sigma[\eps_\delta \xi |.|^{1-2s}] \left( \frac{x_2}{e}
    \right), \\
    I_s(y)
    &=
    \begin{cases}
      \displaystyle
      y^{\frac{1-k}{2}}
      (y-1) ^{\frac{k-l}{2} - 1 + s }
      \frac{
        \Gamma \left( \frac{l + k}{2} - s \right)
      }{
        \Gamma (l) \Gamma \left( \frac{k - l}{2} + s
        \right)
      }
      F \left(
        \efrac{
          \frac{l - k}{2} + 1 - s,
          \frac{l - k}{2} + s
        }{
          l}
        ; \frac{1}{1 - y}
      \right) 
      & y > 1,
      \\
      0 & y = 1, s \neq 1/2,
      \\
      i^{l-k+1}/2
      & y = 1, s =1/2
    \end{cases}
  \end{align*}
  with $F = {}_2 F_1$ the Gauss hypergeometric function.
  If $k - l$ is even, $s = 1$, and for some $\delta \mid D$
  both of the characters $\chi \eps_{\delta'}$ and
  $\eps_{\delta}$
  are principal, then one should interpret the undefined factor
  $I_1(1) S_1^\delta(0) = 0 \cdot \infty$
  in the term $n = m \delta$ of the inner sum
  of the RHS of \eqref{eq:7}
  as the finite quantity
  $(-1)^r \phi(a)/ (2 r a)$,
  where $r = \frac{k-l}{2} - 1 \in \mathbb{Z}_{\geq 0}$
  and $a = \delta M = \delta [N, \delta ']$.
\end{theorem}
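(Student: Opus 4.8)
The plan is to express the moment $\mathcal{M}_s(k,N,\chi,m,g)$ as a Petersson-type inner product and then invoke the explicit computation of the kernel of $f \mapsto L(f\otimes g,s)$ due to Goldfeld--Zhang \cite{GoZh99}. First I would recall that, for a critical value $s$ of the stated type, the map $S_k(N,\chi)\ni f \mapsto L(f\otimes g,s)$ is represented by a Poincar\'e-type series: there is an explicit kernel $G_s \in S_k(N,\chi)$ (built from $g$, its images $g|W_\delta$ at the cusps of $\Gamma_0(D)$, and an Eisenstein ingredient responsible for the $L(\chi\eps,2s)$ factor) such that $L(f\otimes g,s) = (G_s, f)$ for all $f$. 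Given such a kernel, the spectral completeness relation $\sum_{f} \overline{\lambda_m(f)}\, (G_s,f)/(f,f) = $ ($m$th ``Fourier/Hecke coefficient'' of $G_s$ normalized as in \eqref{eq:3}) reduces the moment to reading off a single coefficient of $G_s$. This is the standard mechanism; the bulk of the work is in making the kernel and its $m$th coefficient explicit.

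The second step is the explicit unfolding. Writing $L(f\otimes g,s) = L(\chi\eps,2s)\sum_n a_n b_n n^{-s}$ (valid after analytic continuation in $s$ into the critical strip, which is legitimate because the conditions $1-(k-l)/2\le s\le (k-l)/2$ and the parity constraint place $s$ in the region of absolute convergence of the relevant Rankin--Selberg integral against a holomorphic $g$ of lower weight --- here the assumption $k>l$ is essential), I would realize $\sum_n a_n b_n n^{-s}$ as an integral of $f$ against a nonholomorphic Eisenstein series times $g$, and then open the Eisenstein series and sum over the cusps of $\Gamma_0([N,D])$. The cusps relevant to the level $[N,D]$ are indexed, up to the contribution of $N$, by divisors $\delta \mid D$; at each such cusp one picks up $g|W_\delta$, whence the Fourier coefficients $b_n^\delta$, the scaling factor $(\delta/4\pi^2)^{1/2-s}$, the root-number-type constant $T_s^\delta$ (which packages $\chi(\delta)$, $i^l$, the Gauss sum $\tau(\xi)$ of the primitive character $\xi$ inducing $\chi\eps_{\delta'}$, and the local factors $(\eps_\delta|.|^{-2s})(q)$, $\eps_\delta(\delta')^{-1}$), and the arithmetic sum $S_s^\delta$ coming from the Ramanujan-sum expansion over $(\mathbb{Z}/[N,\delta'])$ of the additive characters that appear --- this is exactly where $M=[N,\delta']_2$, the M\"obius sum $\sum_{e\mid(M,x_2)}\mu(M/e)(e/M)$, and the divisor sum $\sigma[\eps_\delta\xi|.|^{1-2s}]$ enter, the splitting $x = x_1 x_2$ with $x_1 \mid q^\infty$ separating the ``bad'' part supported on the conductor of $\xi$. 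The archimedean integral over $y$ produces $I_s(y)$: one is left with a Mellin-type integral of $y^{k/2}$ against $W_{s}$-Whittaker data and a power of $(y-1)$, which evaluates to the displayed $_2F_1$ by a standard hypergeometric integral (Euler's integral representation), with the $y=1$ boundary cases handled separately --- the value $i^{l-k+1}/2$ at $y=1$, $s=1/2$ coming from a residue/limit as the two hypergeometric parameters collide.

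The third step is the coefficient extraction: $\mathcal{M}_s$ equals the $m$th normalized coefficient of $G_s$, and matching the Dirichlet series $\sum a_n b_n n^{-s}$ coefficient-wise against the kernel's expansion yields the main term $L(\chi\eps,2s)\, b_m m^{-s}$ (the ``diagonal'' $n = m\delta$ with $\delta = 1$, i.e. the identity coset) plus the off-diagonal sum $2\pi i^{-k}\sum_{\delta\mid D} T_s^\delta \sum_{n=1}^{m\delta} \cdots$, the congruence condition $N_1 \mid (m\delta-n)_1 q$ on the inner sum being precisely the support condition for nonvanishing of the relevant Ramanujan/Gauss sums at level $[N,\delta']$. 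Finiteness of the sum is automatic since $I_s(m\delta/n)$ vanishes for $n > m\delta$ (the factor $(y-1)^{(k-l)/2-1+s}$ together with $y\ge 1$), so only $1\le n\le m\delta$ survive. I expect the \emph{main obstacle} to be the bookkeeping at the cusps: correctly tracking, for each $\delta\mid D$, the interaction between the nebentypus $\chi\eps$, the decomposition $\eps=\eps_\delta\eps_{\delta'}$, the primitive character $\xi$ inducing $\chi\eps_{\delta'}$ mod $[N,\delta']$, and the prime-to-$q$ versus $q$-primary parts of the various moduli --- in particular getting the constants $T_s^\delta$ and $S_s^\delta$ exactly right (signs, Gauss-sum conjugations, the $\eps_\delta(\delta')$ in the denominator) rather than up to an ambiguous unit, which is what makes the formula an honest identity usable for mod-$p$ nonvanishing. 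Finally, the degenerate case $k-l$ even, $s=1$, with both $\chi\eps_{\delta'}$ and $\eps_\delta$ principal gives $I_1(1)=0$ against $S_1^\delta(0)=\sigma[\,|.|^{-1}](0)=\zeta(1)=\infty$; here I would redo the archimedean and arithmetic local computations jointly, taking the limit $s\to 1$ so that the simple zero of $I_s(1)$ (order from $\Gamma((k-l)/2+s)^{-1}$ evaluated where $(k-l)/2-1+s$ hits a nonpositive... rather, from the explicit $y=1,s\ne1/2$ value being $0$) cancels the simple pole of $L(\chi_0\eps_{\delta'}\cdot\chi_0\eps_\delta,2s)$-type factor at $s=1$, yielding the stated finite value $(-1)^r\phi(a)/(2ra)$ with $r=(k-l)/2-1$ and $a=\delta[N,\delta']$; this is a routine but delicate L'H\^opital computation once the two factors are isolated.
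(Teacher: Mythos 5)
Your high-level skeleton (represent $f\mapsto L(f\otimes g,s)$ by a kernel, extract its $m$th coefficient, observe that the answer truncates at the special $s$) matches the paper's, but the computational route you describe is not the one the paper takes, and your justification of the truncation has a genuine gap.

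On the route: you cite Goldfeld--Zhang but then describe the Gross--Zagier method --- realize $\sum a_nb_nn^{-s}$ as an integral of $f$ against $gE_s$, open the Eisenstein series over the cusps of $\Gamma_0([N,D])$, and (implicitly) holomorphically project. The paper goes the other way around the duality of Lemma \ref{lem:obvious2}: it computes $L_s(\lambda_m^*)=\sum_n b_nn^{-s}\lambda_n(\lambda_m^*)$ by inserting the Petersson formula for $\lambda_n(\lambda_m^*)$, opening the Kloosterman sums, and applying Voronoi summation (the functional equation \eqref{eq:func-eqn} for additive twists of $L(g,\cdot)$) to the $n$-sum; the divisors $\delta\mid D$, the Gauss and Ramanujan sums packaged in $T_s^\delta$ and $S_s^\delta$, and the Mellin--Barnes integral $I_s$ all arise on that side. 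Your route is viable in principle (it is essentially how Michel--Ramakrishnan proceed for dihedral $g$), but it requires holomorphic projection and a trace from level $[N,D]$ down to level $N$, neither of which you address, and avoiding exactly that machinery is the point of the paper's method.

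On the gap: your claim that the hypotheses ``place $s$ in the region of absolute convergence of the relevant Rankin--Selberg integral'' is false --- at $s=1/2$ nothing converges absolutely. The truncation to $n\le m\delta$ is not a convergence statement; it is the vanishing of $I_s(y)$ for $y<1$ at the special $s$, caused by the factor $\Gamma(\tfrac{l-k}{2}+s)^{-1}$ in the $y<1$ branch of \eqref{eq:Isoln} (equivalently, the holomorphy of \eqref{eq:26} in a left half-plane), not, as you write, by ``$(y-1)^{(k-l)/2-1+s}$ together with $y\ge 1$'': for $n>m\delta$ one has $y=m\delta/n<1$. More seriously, you never confront the fact that the infinite-sum identity (Theorem \ref{thm:general-kernel}) is only proved, and its series only converges, in the strip $(3-k)/2<\Re(s)<(k-1)/2$ (Lemma \ref{lem:convergence-of-basic-kernel-sum}); when $l=1$ the special points $s_0=\tfrac{k-1}{2}$ and $1-\tfrac{k-1}{2}$ lie on the boundary of that strip, so one cannot set $s=s_0$ and discard the tail term by term. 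The paper must take a limit $s\to s_0$ from inside the strip and prove the shifted-convolution estimate \eqref{eq:24} to justify the interchange. Relatedly, the hypothesis $k\ge 4$ enters because the Petersson-side expansion is valid for $\Re(s)>5/4$ and the Voronoi-dual expansion for $\Re(s)<(k-1)/2$, and these ranges must overlap; your sketch gives no indication of where this is used. Without these points the passage from the exact infinite identity to the finite formula is not justified.
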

\begin{remark}
  Under the hypotheses of
  Theorem \ref{thm:finite-formula},
  the hypergeometric function appearing in the definition
  of $I_s(y)$ for $y > 1$ is a rational function of $y$
  with rational coefficients (that may
  be expressed in terms of associated Legendre
  functions of the first kind).
  Precisely, if $(x)_n = x(x+1) \dotsb (x+n-1)$ is the
  Pochhammer symbol
  and we set $r = \frac{k-l}{2} - s \in \mathbb{Z}_{\geq 0}$,
  then
  \begin{equation}\label{eq:8}
    F \left(
      \efrac{
        \frac{l - k}{2} + 1 - s,
        \frac{l - k}{2} + s
      }{
        l}
      ; x
    \right)
    = \sum_{n=0}^{r}
    \frac{(-r+1-2s)_n (-r)_n}{(l)_n (1)_n}
    x^n.
  \end{equation}
  For example, if $k - l = 1$ and $s = 1/2$,
  then (\ref{eq:8}) is identically $1$.
  Thus the RHS of (\ref{eq:7})
  is an explicit finite expression whose computation
  reduces to that of Dirichlet $L$-values
  and Gauss sums.
\end{remark}
\begin{remark}
  Theorem \ref{thm:finite-formula} gives a formula for the twisted first
  moment over a basis of all cusp forms, not just newforms.
  In certain applications we have
  $\mathcal{M}_s^\#(k,N,\chi,m,g)=\mathcal{M}_s(k,N,\chi,m,g)$;
  for example,
  this
  happens if there are no oldforms in $S_k(N,\chi)$,
  or if for any oldform $f \in S_k(N,\chi)$ the functional
  equation
  implies
  $L(f \otimes g,1/2)=0$.
  In that case Theorem
  \ref{thm:finite-formula}
  provides (tautologically) an average over newforms.
\end{remark}
\begin{remark}
  When $S_k(N,\chi)$ is one-dimensional and spanned by $f$,
  Theorem \ref{thm:finite-formula}
  gives an exact formula for some values of $L(f \otimes g,s)$.
  In general, one can use linear algebra to recover exact
  finite formulas for
  $L(f \otimes g, s) / (f,f)$ from those given for the twisted
  moments
  $\mathcal{M}_s(k,N,\chi,m,g)$
  for a sufficiently large set of integers $m$.
  This may be of independent computational interest,
  as it gives an exact approach for calculating
  the algebraic part of
  $L(f \otimes g,s)$ in contrast to a more traditional approach
  using
  an approximate
  functional equation.
\end{remark}
\begin{remark}
  One can obtain a similar finite formula with an additional
  explicit term
  when $g$ is non-cuspidal, but for technical reasons we have
  not
  carried this out (see \S\ref{sec:some-results-goldf-2}).
\end{remark}

\subsection{Examples}\label{sec:examples}
In this section we spell out some ready-to-use
deductions
of Theorem \ref{thm:finite-formula}
in special cases for which less notational
overhead is required.
Recall the definition of $I_s$ from the statement
of Theorem \ref{thm:finite-formula}.
\begin{corollary}\label{thm:ex2}
  Suppose that $k$ is odd, $N > 1$, $\chi$ is
  primitive, $l$ is even, $D = 1$, and $k > l$.
  Let $g \in S_l(D) = S_l(1)$.  Then
  \begin{equation}\label{eq:1aaa}
    \mathcal{M}_{1/2}^\#(k,N,\chi,m,g)
    =
    L(\chi,1)
    \frac{ b_m }{ m ^{1/2} }
    +  \frac{2 \pi i^{l-k} \tau(\chi) }{N}
    \sum_{n=1}^{m-1} \frac{b_n}{n^{1/2}} I_{1/2} \left( \frac{m}{n}
    \right) \sigma[\chi](m-n).
  \end{equation}
\end{corollary}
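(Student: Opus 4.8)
The plan is to deduce Corollary \ref{thm:ex2} directly from Theorem \ref{thm:finite-formula} by specializing to $D = 1$ and $s = \tfrac12$. Two preliminary points come first. Since $\chi$ is primitive of conductor $N$, it is induced from no proper divisor of $N$, so $S_k(N,\chi)$ contains no oldforms and $\mathcal{M}_{1/2}^\#(k,N,\chi,m,g) = \mathcal{M}_{1/2}(k,N,\chi,m,g)$; hence it suffices to compute the latter via \eqref{eq:7}. And the hypotheses of Theorem \ref{thm:finite-formula} are met: with $k$ odd and $l$ even the integer $2s = 1$ has the same parity as $k \pm l$, and $k > l$ gives $1 - (k-l)/2 \le \tfrac12 \le (k-l)/2$; the standing hypothesis $k \ge 4$ from \eqref{eq:assumptions} is automatic in every non-trivial case, since $S_l(1) \neq 0$ already forces $l \ge 12$ and thus $k \ge 13$.

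Next I would unwind the $\delta$-dependent notation of \S\ref{sec:main-result} in the degenerate case $D = 1$. There is a single divisor $\delta = \delta' = 1$; the character $\eps$ is trivial, and $\chi \eps_{\delta'} = \chi$ is already primitive, so $\xi = \chi$ and $q = N$. Consequently $M = [N,1]_2 = 1$ (every prime of $N$ divides $q = N$), and likewise $N_1 = N$, so the congruence $N_1 \mid (m\delta - n)_1 q$ restricting the inner sum of \eqref{eq:7} holds for every $n$. The leading term of \eqref{eq:7} is then $L(\chi\eps, 2s)\,b_m/m^s = L(\chi,1)\,b_m/m^{1/2}$, the first term of \eqref{eq:1aaa}. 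For the correction term I would simplify the three $\delta$-indexed ingredients at $\delta = 1$, $s = \tfrac12$: in $T_s^\delta$ the prefactor $(\delta/4\pi^2)^{1/2 - s}$ is $1$, the characters $\eps_\delta$ and $|.|^{1-2s}$ are trivial, and $\chi(\delta) = \eps_\delta(\delta') = (\eps_\delta\xi|.|^{1-2s})(M) = 1$, so $T_{1/2}^1 = i^l \tau(\chi)\,(\eps_\delta|.|^{-2s})(q) = i^l \tau(\chi)/N$ and $2\pi i^{-k} T_{1/2}^1 = 2\pi i^{l-k}\tau(\chi)/N$, the constant in \eqref{eq:1aaa}; the coefficient $b_n^\delta$ is just $b_n = b_n^1$; $I_s(m\delta/n)$ is $I_{1/2}(m/n)$; and in $S_s^\delta$ the divisor $M = 1$ reduces the $e$-sum to its single term $e = 1$, after which $S_{1/2}^1(m\delta - n)$ collapses, via an elementary divisor-sum identity, to the factor $\sigma[\chi](m-n)$.

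Finally one must reconcile the ranges of summation: \eqref{eq:7} runs to $n = m\delta = m$, whereas \eqref{eq:1aaa} stops at $n = m-1$. For $n = m$ the argument $m - n = 0$ has prime-to-$q$ part $0$, so $\xi(0) = \chi(0) = 0$ because $q = N > 1$, forcing $S_{1/2}^1(0) = 0$; since $I_{1/2}(1) = i^{l-k+1}/2$ is finite, the $n = m$ term vanishes, and the $0 \cdot \infty$ interpretation clause at the end of Theorem \ref{thm:finite-formula} does not intervene, as it applies only when $k - l$ is even while here $k - l$ is odd. I do not anticipate a substantive obstacle: the whole proof is the careful translation of the heavily overloaded notation of Theorem \ref{thm:finite-formula} into this degenerate setting, the only delicate points being the conventions governing $\xi(0)$ and the factorization $A = A_1 A_2$ (which here makes $M = 1$ and renders the congruence condition vacuous), together with checking that the exceptional $0 \cdot \infty$ clause is genuinely inactive.
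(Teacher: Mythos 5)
Your proposal is correct and follows essentially the same route as the paper's own proof: apply Theorem \ref{thm:finite-formula} with $D=\delta=\delta'=1$, so that $\xi=\chi$, $q=N$, $M=1$, compute $T_{1/2}^1=i^l\tau(\chi)/N$, $S_{1/2}^1(0)=0$ and $S_{1/2}^1(x)=\sigma[\chi](x)$ for $x\neq 0$, note $b_n^1=b_n$, and use primitivity of $\chi$ to rule out oldforms so that $\mathcal{M}^\#=\mathcal{M}$. Your write-up is simply a more explicit account of the same specialization (including the verification of the hypotheses of Theorem \ref{thm:finite-formula}, the vacuity of the congruence condition, and the vanishing of the $n=m$ term), all of which the paper leaves implicit.
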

\begin{proof}
  We apply Theorem \ref{thm:finite-formula}.
  Under the stated conditions, we have
  \[
  T_{1/2}^1 = 
  i^{l}\tau(\chi)
  N
  ,
  \quad
  S_{1/2}^1(0) = 0,
  \quad S_{1/2}^1(x) = \sigma[\chi](x)
  \text{ for } x
  \neq 0.
  \]
  Note also that $b_n^1 = b_n$.
  The primitivity
  of $\chi$ implies that there are no oldforms in $S_k(N,\chi)$,
  so $\mathcal{M}_s^\# (\dotsb) = \mathcal{M}_s(\dotsb)$ and
  the claim follows.
\end{proof}
\begin{remark}
  When $m = 1$, the sum over $n$ in (\ref{eq:1aaa})
  is empty, so we recover the statement of
  Theorem \ref{thm:simple}.
\end{remark}

\begin{corollary}\label{thm:ex1}
  Suppose that $k$ is even, $N$ is prime, $\chi = \chi_0$ is the
  principal character
  modulo $N$,
  $l$ is odd, $D$ is prime, $\eps$ is primitive,
  $k > l$, $N \neq D$
  and $g \in S_l(D,\eps)$ is a Hecke eigenform with
  $b_1 = 1$.  Then 
  \begin{eqnarray}\label{eq:13}
    \mathcal{M}_{1/2}(k,N,\chi,m,g)
    &=&
    \left( 1 - \frac{\eps(N)}{N} \right)
    L(\eps,1)
    \frac{b_m}{m^{1/2}}
    + \eps(N) \frac{\tau(\eps)^2 \overline{b_D^2}}{D}
    \left( 1 - \frac{1}{N} \right)
    L(\overline{\eps},1)
    \frac{\overline{b_m}}{m^{1/2}} \\
    &+& \nonumber
    \frac{2 \pi i^{l-k}
      \tau(\overline{\eps})
      \eps(N)
    }{
      D
    }
    ( S_1 + S_D),
  \end{eqnarray}
  where
  \begin{align*}
    S_1 &= 
    \sum_{n=1}^{m-1}
    \frac{b_n}{n^{1/2}}
    I_{1/2} \left( \frac{m}{n} \right)
    \left(
      \overline{\eps}(N)
      \sigma[\overline{\eps}] \left( \frac{m - n }{N} \right)
      -
      \frac{\sigma[\overline{\eps}](m-n)}{N}
    \right), \\
    S_D &= -
    D^{1/2} b_D
    \sum_{n=1}^{mD-1}
    \frac{\overline{b_n}}{n^{1/2}}
    I_{1/2} \left( \frac{m D}{n} \right)
    \left(
      \sigma[\eps] \left( \frac{m - n }{N} \right)
      -
      \frac{\sigma[\eps](m-n)}{N}
    \right)
  \end{align*}
\end{corollary}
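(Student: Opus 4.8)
The plan is to apply Theorem~\ref{thm:finite-formula} with $s=1/2$ and to unwind the notation. The hypotheses make $s=1/2$ admissible: $2s=1$ has the parity of $k\pm l$ because $k$ is even and $l$ odd, and $1-(k-l)/2\le 1/2\le (k-l)/2$ because $k>l$. Since $D$ is prime, the sum $\sum_{\delta\mid D}$ in \eqref{eq:7} has exactly two terms, $\delta=1$ (so $\delta'=D$) and $\delta=D$ (so $\delta'=1$), and essentially the whole proof is the evaluation of $T^\delta_{1/2}$, $S^\delta_{1/2}$ and the coefficients $b^\delta_n$ in these two cases.

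First I would record the $\delta$-dependent data. For $\delta=1$ one has $\eps_\delta$ trivial and $\eps_{\delta'}=\eps$, so the primitive character inducing $\chi_0\eps_{\delta'}=\chi_0\eps$ is $\xi=\eps$, giving $q=D$ and $M=[N,D]_2=N$; for $\delta=D$ one has $\eps_\delta=\eps$ and $\eps_{\delta'}$ trivial, so $\xi$ is the trivial character, $q=1$, and again $M=N$. At $s=1/2$ the factor $(\delta/4\pi^2)^{1/2-s}$ equals $1$ and every $|\cdot|^{1-2s}$ disappears, so $T^\delta_{1/2}$ and $S^\delta_{1/2}$ collapse to elementary expressions; in particular, because $N$ is prime the M\"obius sum defining $S^\delta_{1/2}(x)$ has only two terms, of the form $\sigma[\cdots](x/N)$ and $N^{-1}\sigma[\cdots](x)$, with the internal character being $\overline\eps$ for $\delta=1$ and $\eps$ for $\delta=D$. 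One also uses that $\chi_0(\delta)=1$ throughout (as $D\neq N$ are both prime) and the elementary identity $\overline\eps(a)\,\sigma[\eps](a)=\sigma[\overline\eps](a)$ for $(a,D)=1$ (together with $\eps(d)=0$ when $D\mid d$) to pass between $\sigma[\eps]$ and $\sigma[\overline\eps]$.

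Because $D$ is prime and $\eps$ is primitive, $S_l(D,\eps)$ has no oldforms, so the Hecke eigenform $g$ is a newform and Theorem~\ref{thm:eta-value} applies: $b^D_n=\eps(-1)\tau(\eps)D^{-1/2}\,\overline{b_D b_n}=-\tau(\eps)D^{-1/2}\,\overline{b_D}\,\overline{b_n}$, using $\eps(-1)=(-1)^l=-1$; I would also use $|b_D|=1$ and the Hecke relation $b_{mD}=b_mb_D$ (valid since $D$ is prime). Inserting this into the $\delta=D$ part of the inner sum of \eqref{eq:7}, combining with $T^D_{1/2}$, and factoring out the constants yields the sum $S_D$, while the $\delta=1$ part is already $S_1$; the Gauss-sum identities $\tau(\eps)\tau(\overline\eps)=\eps(-1)D$ and $\overline{\tau(\eps)}=\eps(-1)\tau(\overline\eps)$ then bring the common prefactor into the stated form $2\pi i^{l-k}\tau(\overline\eps)\eps(N)/D$.

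The two displayed main terms need separate care. The explicit term $L(\chi_0\eps,1)\,b_m/m^{1/2}$ of \eqref{eq:7} equals $(1-\eps(N)/N)L(\eps,1)\,b_m/m^{1/2}$, since dropping the Euler factor at $N$ multiplies $L(\eps,s)$ by $(1-\eps(N)N^{-s})$; this is the first displayed term. The second displayed term is not a main term of \eqref{eq:7} but arises from the endpoint $n=m\delta$ of the inner sum when $\delta=D$: there $I_{1/2}(1)=i^{l-k+1}/2\neq0$ and, because $\xi$ is trivial for $\delta=D$ so that $\xi(0)=1$, the factor $S^D_{1/2}(0)$ is the nonzero quantity $(1-1/N)\sigma[\eps](0)=(1-1/N)L(\eps,0)$ under the regularization convention $\sigma[\eps](0)=L(\eps,0)$. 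Substituting $b^D_{mD}=-\tau(\eps)D^{-1/2}\,\overline{b_D^2}\,\overline{b_m}$ and applying the functional equation of $L(\eps,\cdot)$ for the odd primitive character $\eps$, in the shape $\pi i\,L(\eps,0)=\tau(\eps)L(\overline\eps,1)$, turns this endpoint contribution into exactly $\eps(N)\tau(\eps)^2\overline{b_D^2}\,D^{-1}(1-1/N)L(\overline\eps,1)\,\overline{b_m}/m^{1/2}$. By contrast the endpoint $n=m$ of the $\delta=1$ sum contributes nothing, as there $\xi=\eps$ is nontrivial so $\xi(0)=0$ forces $S^1_{1/2}(0)=0$; this is why $S_1$ and $S_D$ are summed only up to $m-1$ and $mD-1$ respectively. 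I expect the one real obstacle to be the volume of character and Gauss-sum bookkeeping: every sign (the $(-1)^l$, the conjugations from Atkin--Li, the sign in the functional equation) and every convention (the value $\xi(0)$, the regularization $\sigma[\cdot](0)$, the splitting $A=A_1A_2$) must be tracked carefully, but, granted Theorems~\ref{thm:finite-formula} and~\ref{thm:eta-value}, no single step is conceptually hard.
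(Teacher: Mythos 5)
Your proposal is correct and follows essentially the same route as the paper: specialize Theorem~\ref{thm:finite-formula} at $s=1/2$, evaluate $T^\delta_{1/2}$, $S^\delta_{1/2}$ and $b_n^\delta$ (via Theorem~\ref{thm:eta-value}) for $\delta=1,D$, and extract the second main term from the endpoint $n=mD$ using $S^D_{1/2}(0)=(1-1/N)L(\eps,0)$, $I_{1/2}(1)=i^{l-k+1}/2$ and the functional equation $\pi i\,L(\eps,0)=\tau(\eps)L(\overline{\eps},1)$. The only point both you and the paper leave implicit is the final Gauss-sum bookkeeping reconciling the factor $\tau(\eps)\overline{b_D}$ produced by the computation with the stated $\tau(\overline{\eps})\,b_D$, which you rightly flag as the one delicate piece of the argument.
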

\begin{proof}
  The formulas in Theorem \ref{thm:finite-formula} show that
  \[
  T_{1/2}^1 =
    i^{l} \tau(\eps)
    \eps(N)
    D
  ,
  \quad
  S_{1/2}^1(0)
  = 0,
  \quad
  S_{1/2}^1(x) = \overline{\eps}(N) \sigma[\overline{\eps}]
  \left(\frac{x}{N}\right) -
  \frac{\sigma[\overline{\eps}](x)}{N}
  \text{ for }x \neq 0,
  \]
  \[
  T_{1/2}^D =
    i^{l} \eps(N)
  ,
  \quad
  S_{1/2}^D(0) = \left( 1 - \frac{1}{N} \right) L(\eps,0),
  \quad
  S_{1/2}^D(x) = \sigma[\eps]\left(\frac{x}{N}\right) - \frac{\sigma[\eps](x)}{N}
  \text{ for }x
  \neq 0,
  \]
  and $I_{1/2}(1) =
  i^{l-k+1}/2$; in deducing the above formula
  for $S_{1/2}^1(x)$ from that given by Theorem \ref{thm:finite-formula},
  we have used that
  $\sigma[\eps](x) = \sigma[\eps](x y)$ if $y$ is a product of
  factors of $D$
  and that $\sigma[\eps](x) = \eps(x)  \sigma[\overline{\eps}](x)$.
  Moreover, since
  $g$ is an eigenform with primitive nebentypus,
  a formula of Atkin-Li \cite{MR508986}
  (as quoted in Theorem \ref{thm:eta-value})
  shows that
  $b_n^D = - \tau(\eps) D^{-1/2} \overline{b_D b_n}$.
  The terms indexed by $n$ in our formula (\ref{eq:7})
  for which $n \neq m D$ contribute the second
  line of the claimed formula (\ref{eq:5}), while $n = mD$
  contributes the second term on the RHS of \eqref{eq:5}
  after applying the functional equation
  $L(\eps,0) = (\pi i)^{-1} \tau(\eps) L(\overline{\eps},1)$
  and evaluating $i^{2l-2k} = -1$.
\end{proof}

\begin{remark}\label{rmk:whatever}
  For applications
  in which it is known in advance that $L(f
  \otimes g,1/2) = 0$ for all forms $f \in S_k(N)$ that come
  from a form of lower level
  (cf. Corollary \ref{cor:realDihedral}), the related Theorem \ref{thm:real-simplification}
  (in which $N$ is not required to
  be prime) may be more useful.
\end{remark}

\begin{corollary}\label{thm:ex3}
  Suppose that $k$ is odd, $N$ is prime,
  $\chi$ is primitive, $l$ is even,
  $D$ is prime,
  $\eps = 1$,
  $k > l$, $N \neq D$,
  and $g \in S_l(D,\eps)$ is a Hecke eigenform
  with $b_1=1$.
  For a nonzero integer $A$
  write $A = A_1 A_2$ with $A_1$ a power of the prime $N$
  and $(A_2,N) = 1$,
  and write $C = 2 \pi i^{l-k} \tau(\chi) \chi(D) / N$
  for brevity.
  Then
  \begin{eqnarray*}
    \mathcal{M}_{1/2}^\#(k,N,\chi,m,g) &=&
    \left( 1 - \frac{\chi(D)}{D} \right)
    L(\chi,1)
    \frac{b_m}{m^{1/2}} \\
    &+& \nonumber
    C
    \sum_{1 \leq n < m}
    \frac{b_n}{n^{1/2}}
    P_{1/2} \left( \frac{m}{n} \right)
    \overline{\chi}((m-n)_2)
    \left[ \sigma[\chi] \left( \frac{(m - n)_2}{D} \right)
      - \frac{1}{D}
      \sigma[\chi]((m-n)_2)
    \right] \\
    &+& \nonumber
    C
    \sum_{1 \leq n < m D}
    \frac{\mu(D) b_{n D} D^{1/2}}{n^{1/2}}
    P_{1/2} \left( \frac{m D}{n} \right)
    \overline{\chi}((mD-n)_2)
    \sum_{
      \substack{
        d|(mD-n)_2 \\
        (d,D)=1
      }
    }
    \chi(d).
  \end{eqnarray*} 
\end{corollary}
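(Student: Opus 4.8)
The plan is to read Corollary~\ref{thm:ex3} off Theorem~\ref{thm:finite-formula} at $s=\tfrac12$. First I would check that the hypotheses of that theorem hold: $k>l$ by assumption, and since $k$ is odd and $l$ even the number $2s=1$ is an odd integer, as are $k\pm l$, while $1-(k-l)/2\le\tfrac12\le(k-l)/2$ because $k-l\ge1$. Since $\chi$ is primitive of conductor $N$, it is not induced from any character of smaller modulus, so $S_k(N,\chi)$ contains no oldforms and $\mathcal{M}^{\#}_{1/2}=\mathcal{M}_{1/2}$; it therefore suffices to evaluate the right-hand side of \eqref{eq:7}. Its leading term is $L(\chi\eps,1)\,b_m m^{-1/2}$, and since $\eps$ is the trivial character modulo $D$ the Dirichlet series $L(\chi\eps,s)$ is $L(\chi,s)$ with the Euler factor at $D$ removed, so $L(\chi\eps,1)=(1-\chi(D)/D)L(\chi,1)$; this is the first line of the asserted formula.

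Next, since $D$ is prime the sum over $\delta\mid D$ has exactly the terms $\delta=1$ and $\delta=D$. For $\delta=1$ the character $\chi\eps_{\delta'}$ is $\chi$ times the trivial character mod $D$, and for $\delta=D$ it is $\chi$ itself; in both cases it is induced by the primitive character $\chi$, so $\xi=\chi$ and $q=N$ throughout, and hence the factorization $A=A_1A_2$ of \eqref{eq:7} is into a power of the prime $N$ times a prime-to-$N$ part, exactly as in the statement. I would then unwind the definitions of $T_s^\delta,S_s^\delta$ at $s=\tfrac12$, using that $\eps_\delta$ is trivial, that $|.|^{1-2s}\equiv1$ and $(\eps_\delta|.|^{-2s})(q)=N^{-1}$, and that $\tau(\xi)=\tau(\chi)$: one finds $M=[N,\delta']_2$ equals $D$ for $\delta=1$ and $1$ for $\delta=D$, that $2\pi i^{-k}T^1_{1/2}=2\pi i^{-k}T^D_{1/2}=2\pi i^{l-k}\tau(\chi)\chi(D)/N=C$, that $S^\delta_{1/2}(0)=0$ in both cases (since $q=N>1$ forces $\xi(0)=0$), so that the terms $n=m\delta$ drop out and the inner sums run over $1\le n<m\delta$, and that
\[
S^1_{1/2}(x)=\overline{\chi}(x_2)\Bigl[\sigma[\chi]\bigl(\tfrac{x_2}{D}\bigr)-\tfrac1D\,\sigma[\chi](x_2)\Bigr],\qquad
S^D_{1/2}(x)=\overline{\chi}(x_2)\!\!\sum_{\substack{d\mid x_2\\(d,D)=1}}\!\!\chi(d),
\]
the former because, $D$ being prime, the M\"obius sum over $e\mid(D,x_2)$ has at most the two terms $e=1,D$. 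Together with $b^1_n=b_n$, the $\delta=1$ term of \eqref{eq:7} is then exactly the second line of the asserted formula.

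It remains to treat the $\delta=D$ term, for which one needs the Fourier coefficients $b^D_n$ of $g$ at the cusp $D$. Since $\eps=1$ is not primitive, Theorem~\ref{thm:eta-value} does not apply verbatim; instead I would argue, as in its proof, by multiplicity one: $g\mid W_D$ is again a Hecke newform with the same eigenvalues, hence $g\mid W_D=\varepsilon g$ for the Fricke eigenvalue $\varepsilon\in\{\pm1\}$, and since $D$ divides the level to the first power one has $b_D=-\varepsilon D^{-1/2}$ and $b_{nD}=b_nb_D$ for all $n$; eliminating $\varepsilon$ gives $b^D_n=\mu(D)D^{1/2}b_{nD}$. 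Substituting this, the formula for $S^D_{1/2}$, and the value $2\pi i^{-k}T^D_{1/2}=C$ into the $\delta=D$ term of \eqref{eq:7} — and noting that the divisibility constraint $N_1\mid(m\delta-n)_1q$ there is vacuous because $N_1=N=q$ — yields the third line, in which $P_{1/2}$ is the kernel $I_{1/2}$ of Theorem~\ref{thm:finite-formula}.

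The step I expect to be the main obstacle is this last one: pinning down the normalization of $W_D$ (which in our convention carries the extra scaling $\operatorname{diag}(D,1)$) and of the Fricke involution carefully enough to be sure the scalar relating $g\mid W_D$ to $g$ is a genuine sign rather than a power of $D$, and then combining it with $b_D=\mp D^{-1/2}$ to land precisely on the factor $\mu(D)D^{1/2}b_{nD}$; once that is in hand, everything else is a mechanical substitution into Theorem~\ref{thm:finite-formula}.
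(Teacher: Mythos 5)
Your proposal is correct and follows essentially the same route as the paper: specialize Theorem \ref{thm:finite-formula} at $s=\tfrac12$, note that $\xi=\chi$, $q=N$ for both $\delta=1$ and $\delta=D$, and unwind $T^\delta_{1/2}$, $S^\delta_{1/2}$ accordingly. The only difference is that where you re-derive $b_n^D=\mu(D)D^{1/2}b_{nD}$ from multiplicity one and the Atkin--Lehner relation $b_D=-\varepsilon D^{-1/2}$, the paper simply cites \cite[Proposition 14.16]{MR2061214} for this identity (valid for squarefree level and trivial nebentypus), which disposes of the normalization worry you flag at the end.
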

\begin{proof}
  Follows from Theorem \ref{thm:finite-formula}
  in the same manner as
  the previous
  two corollaries, using that
  $b_n^D = \mu(D) b_{n D} D^{1/2}$ when
  $D$ is squarefree and $\eps = 1$ \cite[Proposition
  14.16]{MR2061214}.
  Here we have $\mu(D) = -1$
  because $D$ is assumed to be prime.
\end{proof}

\subsection{Numerical verification}
In this section we carry out some simple
numerical tests of our formulas by choosing
triples $(k,N,\chi)$ for which $\dim S_k(N,\chi) = 1$, say
$S_k(N,\chi) = \langle f \rangle$, so that if $L(f \otimes g, s)
\neq 0$, then Theorem \ref{thm:finite-formula} implies
\begin{equation}\label{eq:9}
  \overline{\lambda_m(f) / \lambda_1(f)}
  =
  \frac{
    \displaystyle
    L(\chi \eps, 2 s)
    \frac{b_m}{m^s}
    + 2 \pi i^{-k}
    \sum_{\delta|D}
    T_s^\delta \sum_{n=1}^{m \delta}
    \frac{b_n^\delta }{n^{1-s}}
    I_s \left( \frac{m \delta }{n} \right)
    S_s^\delta(m \delta - n)
  }
  {
  \displaystyle
    L(\chi \eps, 2 s)
    b_1
    + 2 \pi i^{-k}
    \sum_{\delta|D}
    T_s^\delta \sum_{n=1}^{\delta}
    \frac{b_n^\delta }{n^{1-s}}
    I_s \left( \frac{\delta }{n} \right)
    S_s^\delta(\delta - n)
  }
\end{equation} 
Because the RHS of \eqref{eq:9} contains the terms $b_m/m^s$ and
$b_1$ which pop out immediately in the proof (from the diagonal
term in the Petersson formula), this seems like a
reasonable check of the correctness of Theorem
\ref{thm:finite-formula}.  The computations below were performed
with the computer algebra package SAGE \cite{sage2009}.

\begin{example}
  Take $(l,D,\eps) = (7,7,\chi_{-7})$.  Then $S_l(D,\eps)$ is
  three-dimensional with a Hecke basis $\{g_1,g_2,g_3\}$ where
  $g_1 = q + 9 q^2 + 17 q^4 - 343 q^7 - \dotsb$ is self-dual
  with rational Fourier coefficients and $g_2 = \overline{g_3} =
  q - 8 q^2 + (\alpha+8) q^3 + (-\alpha-8) q^5 + (-8 \alpha -
  64)q^6 + \dotsb$ have coefficients in an imaginary quadratic
  extension of $\mathbb{Q}$, where $\alpha$ satisfies $\alpha^2
  + 16 \alpha + 2104 = 0$.  Take $(k,N,\chi) = (8,3,1)$.  Then
  $S_8(3,1) = S_8(3)$ is one-dimensional, spanned by the
  newform $f = \sum a(n) q^n = q + 6 q^2 - 27 q^3 - 92 q^4 + 390
  q^5 - 162 q^6 - \dotsb$.  Let $\nu_i(m)$ ($i=1,2,3$) denote
  the right hand side of \eqref{eq:13} when $g =
  g_i$.  It is easy to compute $\nu_i(m)$ numerically; we find
  that $\nu_1(1), \dotsc, \nu_1(6)$ are approximately $2.243$,
  $1.189$, $-1.295$, $-1.612$, $3.130$ $-0.686$ and that
  $\nu_2(1), \dotsc, \nu_2(6)$ are approximately $0.362 +
  0.861i$, $0.192 + 0.456i$, $-0.209 - 0.497i$, $-0.260 -
  0.619i$, $0.505 + 1.202i$, $-0.110 - 0.263i$, and $\nu_3(m) =
  \overline{\nu_2(m)}$.  Now set $a_i(n) = n^{(k-1)/2}\nu_i(m) /
  \nu_i(1)$.  Since $S_k(N)$ is one-dimensional, we should have
  $a_i(n) = a(n)$ if our formula is correct.  Indeed, we find
  that $a_i(1),\dotsc, a_i(6)$ are $1.000, 6.000, -27.000,
  -92.000, 390.000, -162.000$ for each $i$; in fact, $a_i(n)$
  and $a(n)$ differ by less than $10^{-13}$ according to our
  computation using floating point precision.  Of course we have
  neglected here the important fact that the $\nu_i(m)$ are
  explicit sums with terms in a cyclotomic extension of
  $\mathbb{Q}$ (up to a multiple of $\pi$).  In this
  case we have
  $(f,f)^{-1} L(f \otimes g_1,1/2) = \nu_1(1) = \pi
  (648/2401)\sqrt{7}$, for example.
\end{example}

\begin{example}
  Let $g \in S_2(11)$ be the weight two cusp form corresponding to the elliptic curve of conductor $11$,
  thus $g = q - 2q^2 - q^3 + 2q^4 + q^5 + 2q^6 - \dotsb$.
  Take $(k,N,\chi) = (5,7,\chi_{-7})$, so that
  $S_5(7,\chi_{-7})$ is one-dimensional and spanned by
  $f = \sum a(n) q^n = q + q^2 - 15 q^4 + 49q^7 - 31 q^8 + \dotsb$.
  Let $\nu(m)$ denote the right hand side of the formula
  given by Corollary \ref{thm:ex1}.
  Our formula then asserts that
  $(f,f)^{-1} L(f \otimes g,1/2) = \nu(1)$.
  Computing numerically, we find
  that $\nu(1) = \pi (6/121) \sqrt{7}$
  and that the numbers $\nu(1), \dotsc, \nu(8)$ are approximately
  $0.412$, $0.103$, $0.000$, $-0.386$, $-3.330$, $1.942$,
  $0.412$, $-0.199$
  with normalized ratios $a_1(n) = n^{(k-1)/2} \nu(n) / \nu(1)$
  approximately $1.000$, $1.000$, $0.000$,
  $-15.000$, $0.000$, $0.000$, $49.000$, $-31.000$,
  in fact that $a_1(n) = a(n)$ to within $10^{-13}$
  (but of course we could have computed this exactly as well).
\end{example}

\begin{example}
  We give an example where $\chi \neq \overline{\chi}$.
  Let $g = q - q^2 - q^4 - 2 q^5 + 4 q^7 + \dotsb \in S_2(17)$ be
  the weight two cusp form corresponding to the elliptic
  curve of conductor $17$, let $\chi$ be the primitive
  character of conductor $11$ for which $\chi(2) = e^{2 \pi i
    /10}$,
  take $(k,N,\chi) = (3,11,\chi)$,
  and let $f$ be the normalized newform that spans the
  one-dimensional
  space $S_3(11,\chi)$.
  If $\zeta = e^{2 \pi i /10}$,
  then we have
  \begin{align*}
    f &= q +
    (-\zeta^3 + 2 \zeta^2 - 2 \zeta)q^2
    + (2 \zeta^3 - 3 \zeta^2 + 3 \zeta - 2)q^3 \\
    &  + (-4 \zeta^2 + 3 \zeta - 4) q^4
    + 4 \zeta^2 q^5 + (\zeta^3 + 3 \zeta^2 - 7 \zeta + 6)
    q^6 + \dotsb \\
    &\approx 1.000
    + (-0.690 - 0.224i)q
    + (-1.118 + 0.812i)q^2
    + (-2.809 - 2.040i)q^3 \\
    &  + (1.236 + 3.804i)q^4
    + (0.954 - 0.310i)q^5
    + (5.854 - 8.057i)q^6
    + \dotsb.
  \end{align*}
  As in the previous examples, let
  $\nu(m)$ denote the right hand side of (\ref{eq:4})
  and $a_1(n) = n^{(k-1)/2} \nu(n) / \nu(1)$.
  Using that $L(\chi,1) =  \pi\sqrt{1/5+i/20}$
  we find that
  $a_1(1), \dotsb, a_1(6)
  \approx 1.000,
  -0.690 + 0.224i,
  -1.118 - 0.812i,
  -2.809+2.040i,
  1.236-3.804i,
  0.954+0.310i,
  5.854+0.8057i$,
  so that $a_1(n) \approx \overline{a(n)}$,
  as expected.
\end{example}

\subsection{Application: stability for twists by mixed signature real
  quadratic theta series}
Let $K / \mathbb{Q}$ be a real quadratic field, $\xi$ a
finite-order Hecke character on $K$, and $g_\xi$ the
corresponding theta series.  Popa \cite{MR2249532} has studied
the Rankin-Selberg $L$-value $L(f
\otimes g_\xi,1/2)$ when $f$ is a newform of trivial central
character and $\xi$ is trivial on $\mathbb{A}_\mathbb{Q}^*$, in
which case $g_\xi$ is a Maass form.  On the other hand, suppose
instead that $\xi$ is a finite order character with mixed
signature at the infinite places $\infty_1, \infty_2$ of $K$, so
that as representations of $\mathbb{R}^*$ we have
$\{\xi_{\infty_1}, \xi_{\infty_2}\} = \{\mathbf{1}, \sgn\}$.
Then $g_\xi$ is a holomorphic cusp form of weight $1$.

\begin{remark}
  One can heuristically explain why $g_\xi$ is holomorphic of
  weight $1$ in the following two ways.  First, the Galois
  representation of $\Gal(\bar{\mathbb{Q}}/\mathbb{Q})$ induced
  by $\xi$ regarded as a character of $\Gal(K^{ab}/K) \cong
  \overline{K^* K_{\infty+}^*} \backslash \mathbb{A}_K^*$ is
  odd (here $K_{\infty+}^*$ is the connected component of the
  identity
  in $(K \otimes_\mathbb{Q}  \mathbb{R})^*$).
  Second, the gamma factor for the $L$-function of $\xi$
  is $\Gamma_\mathbb{R}(s) \Gamma_\mathbb{R}(s+1) =
  \Gamma_\mathbb{C}(s)$, where $\Gamma_\mathbb{R}(s) =
  \pi^{-s/2} \Gamma(s/2)$ and $\Gamma_\mathbb{C}(s) = 2 (2
  \pi)^{-s} \Gamma(s)$.  The gamma factor for a holomorphic cusp
  form of weight $l$ is $\Gamma_\mathbb{C}(\frac{l-1}{2} + s)$,
  while that for a Maass cusp form of eigenvalue $1/4 + \nu^2$
  is $\Gamma_\mathbb{R}(s-\nu) \Gamma_\mathbb{R}(s+\nu)$.  Since
  the lift $g_\xi$ is characterized by the relation $L(\xi,s) =
  L(g_\xi,s)$, we see by comparing the gamma factors that it
  must be holomorphic of weight $l=1$.  See \cite[12.3]{Iw97}
  for further discussion.
\end{remark}

As the following theorem shows, the ``stability'' result for imaginary
quadratic theta series described in the introduction carries
over without essential modification to the mixed signature real
quadratic case.
\begin{theorem} \label{thm:real-concrete}
  Let $K = \mathbb{Q}(\sqrt{D})$ be a real quadratic field of odd
  fundamental discriminant $D > 0$.  Let $\xi$ be a finite order
  Hecke
  character of $K$ of modulus $\mathfrak{m}$ such that with respect
  to a fixed real embedding, we have
  \[
  \xi((\alpha)) = \frac{\alpha}{|\alpha|}\] for all $\alpha \in
  K^*$ with $\alpha \equiv 1 \pmod{\mathfrak{m}}$.  
  Define the cuspidal weight $1$ theta series $g_\xi$ by
  \begin{equation*}
    g_\xi(z) = \sum_{n \geq 1} b_n q^n = \sum_{\mathfrak{a}} \xi(\mathfrak{a}) q^{N_{K/\mathbb{Q}}(\mathfrak{a})},
  \end{equation*}
  where the sum is taken over all nonzero integral ideals
  $\mathfrak{a}$ in the ring $\mathcal{O}_K$ of integers in $K$.
  Suppose that $N_{K/\mathbb{Q}}( \mathfrak{m})$ is squarefree
  and prime to $D$, that the Dirichlet character $n \mapsto
  \xi((n))$ is quadratic of conductor $N_{K/\mathbb{Q}}(
  \mathfrak{m})$, and that the Fourier coefficients $b_n$ are
  real.
  Let $\eps$ be the primitive quadratic Dirichlet character of
  conductor $D \cdot N_{K/\mathbb{Q}}( \mathfrak{m})$ given by
  $\eps(n) = \chi_D(n) \xi((n))$ for all positive integers $n$,
  where $\chi_D$ is the primitive quadratic Dirichlet character associated
  to the extension $K$.
  Let $k$ \RestrictK be even, $N$ a rational prime for which $\eps(-N) =
  1$,
  and $\chi = 1$.
  Then if $N
  > m D N_{K/\mathbb{Q}}(
  \mathfrak{m})$, we have
  \begin{equation} \label{eq:real-concrete}
    \mathcal{M}_{1/2}^\#(k,N,\chi,m,g)
    = 2 \frac{ b_m }{ m^{1/2} } L(\eps,1).
  \end{equation}
\end{theorem}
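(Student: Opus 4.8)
The plan is to deduce \eqref{eq:real-concrete} from the master formula, Theorem~\ref{thm:finite-formula}, applied with $l=1$, $s=\tfrac12$, and $g=g_\xi$; write $D':=D\,N_{K/\mathbb{Q}}(\mathfrak{m})$ for the level of $g_\xi$. First I would record the facts I need about $g_\xi$. By the theory of theta series of Hecke characters (see the remark preceding the statement, and \cite[Ch.~12]{Iw97}) the mixed-signature infinity type makes $g_\xi$ a holomorphic \emph{newform} of weight $1$, level $D'$ and nebentypus $\chi_D\cdot(n\mapsto\xi((n)))=\eps$; here $D'$ is squarefree, $\eps$ is primitive of conductor $D'$ with $\eps(-1)=-1=(-1)^1$, and $b_1=\xi(\mathcal{O}_K)=1$, so the standing assumptions \eqref{eq:assumptions} hold and Theorem~\ref{thm:eta-value} gives $|b_{D'}|=1$ and $b^{D'}_n=\eps(-1)\tau(\eps)(D')^{-1/2}\overline{b_{D'}b_n}$. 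Since the $b_n$ are real and $\eps$ is quadratic, and since the local representation of $g_\xi$ at each $p\mid D'$ is ramified of conductor $p$ (whence $b_{p^j}=b_p^j$), one has $b_{mD'}=b_m b_{D'}$ and $b_{D'}^2=1$ for every $m$, so $b^{D'}_{mD'}=-\tau(\eps)(D')^{-1/2}b_m$.

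\paragraph{}
Next I would verify the hypotheses of Theorem~\ref{thm:finite-formula} for $(k,N,\chi_0)$, $(D',\eps)$, $l=1$, $s=\tfrac12$: indeed $k>1$; $2s=1$ has the parity of $k\pm1$ (odd, as $k$ is even); and $1-\tfrac{k-1}{2}\le\tfrac12\le\tfrac{k-1}{2}$. The theorem then evaluates $\mathcal{M}_{1/2}(k,N,\chi,m,g)$, the average over a basis of all of $S_k(N)$; to recognise it as $\mathcal{M}_{1/2}^\#$ I must know that every oldform $f\in S_k(N)$ has $L(f\otimes g_\xi,\tfrac12)=0$ (a condition vacuous when $S_k(1)=0$). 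This is where the ``odd type'' of $g_\xi$ enters: because the $b_n$ are real and the archimedean and ramified local constants of $g_\xi$ conspire so that the completed Rankin--Selberg $L$-function of $g_\xi$ against any form of trivial nebentypus has functional-equation sign $-1$, one gets $L(f\otimes g_\xi,\tfrac12)=0$ for all such $f$ (in particular for all oldforms, since $N$ prime forces them to come from level $1$). Granting this, $\mathcal{M}_{1/2}^\#(k,N,\chi,m,g)$ is given by the right-hand side of \eqref{eq:7}.

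\paragraph{}
Now I would feed in the stable range $N>mD'$. For each $\delta\mid D'$, because $(N,D')=1$ (forced by $\eps(-N)\ne0$), the attached primitive character inducing $\chi_0\eps_{\delta'}\bmod N\delta'$ is just $\eps_{\delta'}$, of conductor $q=\delta'$; hence $M=[N,\delta']_2=N$ and the divisibility constraint in \eqref{eq:7} is vacuous. Since $0<m\delta-n<N$ when $1\le n<m\delta$, one finds $S^\delta_{1/2}(m\delta-n)=-\tfrac1N\eps_\delta((m\delta-n)_1)\eps_{\delta'}((m\delta-n)_2)\sigma[\eps]((m\delta-n)_2)$, while $S^\delta_{1/2}(0)=0$ for every $\delta\ne D'$ (as then $q=\delta'>1$, so $\xi(0)=0$) and $S^{D'}_{1/2}(0)=(1-\tfrac1N)L(\eps,0)$. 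Thus the only surviving ``diagonal'' term is $\delta=D'$, $n=mD'$, and \emph{everything else} in the double sum is $O(1/N)$. The crux of the argument --- and the step I expect to be the main obstacle --- is to show that these $O(1/N)$ contributions (the $n<m\delta$ terms, over all $\delta\mid D'$) cancel identically; this is the essence of the stability phenomenon, it does not follow from any formal manipulation, and it requires the Atkin--Li formulas for the coefficients $b^\delta_n$ at \emph{every} cusp (not only the one furnished by Theorem~\ref{thm:eta-value}) together with a pairing of the index $\delta$ against $\delta'$ in the double sum. Granting it, the right-hand side of \eqref{eq:7} collapses to
\[
L(\chi_0\eps,1)\,\frac{b_m}{m^{1/2}}\;+\;2\pi i^{-k}\,T^{D'}_{1/2}\,\frac{b^{D'}_{mD'}}{(mD')^{1/2}}\,I_{1/2}(1)\,S^{D'}_{1/2}(0).
\]

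\paragraph{}
Finally I would evaluate the two surviving terms. One has $L(\chi_0\eps,1)=(1-\eps(N)/N)L(\eps,1)$, $T^{D'}_{1/2}=i\,\eps(N)$, $I_{1/2}(1)=i^{2-k}/2$, $b^{D'}_{mD'}=-\tau(\eps)(D')^{-1/2}b_m$ (Theorem~\ref{thm:eta-value}), and $S^{D'}_{1/2}(0)=(1-1/N)L(\eps,0)$; substituting these and then using the functional equation $L(\eps,0)=(\pi i)^{-1}\tau(\eps)L(\eps,1)$, the identity $\tau(\eps)^2=\eps(-1)D'=-D'$, and $i^{3-2k}=-i$ (as $k$ is even), the second term simplifies to $(-\eps(N)+\eps(N)/N)L(\eps,1)b_m/m^{1/2}$. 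Hence the whole expression equals $(1-\eps(N))L(\eps,1)b_m/m^{1/2}$; and since $\eps(-1)=-1$ together with the hypothesis $\eps(-N)=1$ forces $\eps(N)=-1$, this is $2\,b_m\,m^{-1/2}\,L(\eps,1)$, which is \eqref{eq:real-concrete}.
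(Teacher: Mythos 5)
Your setup, your verification of the hypotheses of Theorem \ref{thm:finite-formula}, and your endgame arithmetic (the evaluation of $T^{D'}_{1/2}$, $S^{D'}_{1/2}(0)$, $I_{1/2}(1)$, $b^{D'}_{mD'}$, the functional equation $L(\eps,0)=(\pi i)^{-1}\tau(\eps)L(\eps,1)$, $\tau(\eps)^2=-D'$, and $\eps(N)=-1$) are all correct. But the proof has a genuine gap exactly where you flag one: you ``grant'' that the $O(1/N)$ off-diagonal contributions (the terms $1\le n<m\delta$, which survive in \eqref{eq:7} because the constraint $N_1\mid(m\delta-n)_1 q$ is vacuous when $(N,D')=1$) cancel identically. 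That cancellation is the entire content of the stability phenomenon, and your proposed mechanism --- a term-by-term pairing of $\delta$ against $\delta'$ using Atkin--Li at every cusp --- is not how it works and is not substantiated. The paper's route is different: Theorem \ref{thm:real-simplification} modifies the Petersson kernel $\lambda^*_{m,N}$ by a linear combination $v$ of oldforms of lower level (harmless, since $L_{1/2}$ annihilates oldforms under the sign argument) and performs a M\"{o}bius inversion over the divisors $N'\mid N$. This inclusion--exclusion collapses the Ramanujan-sum factor $\sum_{e\mid(M,x_2)}\mu(M/e)\frac{e}{M}\sigma[\cdots](x_2/e)$ in $S^\delta_s$ to the single term $e=N$, thereby imposing the divisibility $N\mid(m\delta-n)$ on the inner sum of \eqref{eq:main2}. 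In the stable range $N>mD'$ that sum is then genuinely \emph{empty} for $1\le n<m\delta$ --- nothing needs to cancel --- and only the $\delta=D'$, $n=mD'$ term survives. Without this (or an equivalent direct proof that the off-diagonal block of \eqref{eq:7} vanishes), your collapse is an unproven assertion.

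A secondary, more minor point: your justification that $L(f\otimes g_\xi,\tfrac12)=0$ for level-one $f$ (so that $\mathcal{M}_{1/2}=\mathcal{M}^\#_{1/2}$) is asserted rather than proved. The paper carries this out explicitly via Li's functional equation: the root number is $\eps(f\otimes g)=\chi(-1)\chi(D')\eps(1)\eta(f)^2\eta(g)^2$ with $\eta(f)=1$ for level one and $\eta(g)^2=\eps(-1)=-1$ by the Atkin--Li pseudo-eigenvalue computation of Theorem \ref{thm:eta-value} (using $b_{D'}\in\{\pm1\}$), giving sign $-1$. You identify the right mechanism but should supply this computation; it is short and uses only facts you have already invoked.
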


\begin{remark}
  The quantity on the right hand side of
  (\ref{eq:real-concrete}) has an arithmetic interpretation.
  The Fourier coefficient $b_m$ is the sum of $\xi$ taken over
  the integral ideals in $\mathcal{O}_K$ of norm $m$.  The
  character $\eps$ corresponds to an imaginary quadratic field
  $K'$ of discriminant $D' = -D \cdot N_{K/\mathbb{Q}}(
  \mathfrak{m})$.  Letting $h'$ denote the class number of
  $\mathcal{O}_{K'}$ and $w'$ the cardinality of
  $\mathcal{O}_{K'}^*$, we have
  \[
  L(\eps,1) = \frac{ 2 \pi  h'}{w' |D'|^{1/2} }.
  \]
\end{remark}

\begin{remark}
  In \cite{MR07}, the analogous stability result for imaginary
  quadratic theta series was applied to obtain non-vanishing and
  non-vanishing mod $p$ results for the central Rankin-Selberg
  $L$-values $L(f \otimes g, 1/2)$.  Since the formula
  (\ref{eq:real-concrete}) is identical in our case, the
  applications carry over without modification.
  For instance, with hypotheses as in Theorem
  \ref{thm:real-concrete},
  we find (taking $m = 1$) that
  as soon as $N > D N_{K/\mathbb{Q}}(\mathfrak{m})$,
  there exists
  $f \in S_k(N,\chi)$ for which
  $L(f \otimes g_\xi,\tfrac{1}{2}) \neq 0$;
  under the same conditions,
  if $p > k+1$ and $p$ does not divide $h'$,
  then there exists
  $f \in S_k(N,\chi)$ for which the algebraic
  part of $L(f \otimes g_\xi,\tfrac{1}{2})$
  is not divisible by any place of $\overline{\mathbb{Q} }$
  above $p$.  The former result can be sharpened;
  we refer to \cite[Thm 3, Thm 4]{MR07} for details.
\end{remark}

\subsection{Application: stability in the vertical sense}
We give one example of this new variant of stability,
saving a more general treatment for \S\ref{sec:appl-vert-stab}.
\begin{theorem}\label{thm:stab-vert-sense}
  Suppose that $k$ \RestrictK is even,
  $\chi = 1$,
  $l$ is odd, $\eps$ is primitive,
  $k > l$, and that for some prime divisor
  $p$ of $D$
  we have $p^{\alpha+1} | N$
  and $p^\alpha \geq m D$.
  Then
  \begin{equation}
    \mathcal{M}_{1/2}(k,N,\chi,m,g)
    =
    \frac{b_m}{m^{1/2}}
    L^N(\eps,1).
  \end{equation}
\end{theorem}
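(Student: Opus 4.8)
The plan is to read off Theorem~\ref{thm:stab-vert-sense} directly from the finite formula \eqref{eq:7} of Theorem~\ref{thm:finite-formula}, specialized to $s=1/2$, by showing that under the stated divisibility hypotheses the entire sum over $\delta\mid D$ on the right-hand side of \eqref{eq:7} collapses to nothing, leaving only the main term. First I would check that Theorem~\ref{thm:finite-formula} applies at $s=1/2$: since $k$ is even and $l$ is odd, $k\pm l$ is odd and $2s=1$ has the same parity, while $k>l$ gives $1-(k-l)/2\le\tfrac12\le(k-l)/2$, and the remaining items in \eqref{eq:assumptions} are part of our standing hypotheses. Next I would identify the main term: because $\chi$ is the principal character modulo $N$, the Dirichlet series $L(\chi\eps,w)=\sum_n\chi(n)\eps(n)n^{-w}$ equals $L(\eps,w)$ with its Euler factors at primes dividing $N$ deleted, i.e.\ $L(\chi\eps,w)=L^N(\eps,w)$; hence the first term on the right of \eqref{eq:7} is $L^N(\eps,1)\,b_m m^{-1/2}$ at $s=1/2$. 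It therefore remains to show that, under the divisibility hypotheses, for every $\delta\mid D$ the inner term
\[
  T_{1/2}^\delta\mathop{\sum_{n=1}^{m\delta}}_{N_1\mid (m\delta-n)_1 q}\frac{b_n^\delta}{n^{1/2}}\,I_{1/2}\!\Big(\frac{m\delta}{n}\Big)\,S_{1/2}^\delta(m\delta-n)
\]
vanishes.

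Fix the prime $p\mid D$ with $p^{\alpha+1}\mid N$ and $p^\alpha\ge mD$, and fix a divisor $\delta\mid D$; since $D$ is squarefree, exactly one of $p\mid\delta$, $p\mid\delta'$ holds, and I would treat the two cases separately. If $p\mid\delta$, then $\gcd(\delta,N)\ge p>1$, so $\chi(\delta)=0$; the formula for $T_s^\delta$ in Theorem~\ref{thm:finite-formula} carries $\chi(\delta)$ as an explicit factor whose remaining factors are all finite at $s=1/2$, so $T_{1/2}^\delta=0$ and the whole $\delta$-term vanishes.

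Now suppose $p\mid\delta'$. Here I would first pin down the $\delta$-dependent data of Theorem~\ref{thm:finite-formula}. Since $\eps$ is primitive modulo the squarefree $D$, each factor $\eps_{\delta'}$ is primitive modulo $\delta'$; and since $\chi$ is principal mod $N$, the character $\chi\eps_{\delta'}$ modulo $[N,\delta']$ agrees with $\eps_{\delta'}$ on units, hence is induced by $\eps_{\delta'}$, so $\xi=\eps_{\delta'}$ and $q=\delta'$. In particular $p\mid q$, and since $p^{\alpha+1}\mid N$ the $q$-part $N_1$ of $N$ (the largest divisor of $N$ composed of primes dividing $q$) satisfies $v_p(N_1)=v_p(N)\ge\alpha+1$. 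For any index $n$ with $1\le n<m\delta$, comparing $p$-adic valuations in the congruence constraint $N_1\mid(m\delta-n)_1 q$ and using that $\delta'$ is squarefree — so $v_p(q)=1$ and $v_p((m\delta-n)_1)=v_p(m\delta-n)$ — forces $v_p(m\delta-n)\ge\alpha$; but $0<m\delta-n<m\delta\le mD\le p^\alpha$, so $v_p(m\delta-n)\le\alpha-1$, a contradiction. Hence the inner sum reduces to the single index $n=m\delta$, where $m\delta-n=0$. Since $q=\delta'>1$ we have $\xi(0)=0$, so $S_{1/2}^\delta(0)=0$; and the degenerate ``$0\cdot\infty$'' clause of Theorem~\ref{thm:finite-formula} does not apply, as it requires $k-l$ even and $s=1$ whereas here $k-l$ is odd and $s=1/2$, so $I_{1/2}(1)=i^{l-k+1}/2$ is finite and the $n=m\delta$ term genuinely vanishes.

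In either case the $\delta$-term is zero, so the sum over $\delta\mid D$ in \eqref{eq:7} vanishes and $\mathcal{M}_{1/2}(k,N,\chi,m,g)=L^N(\eps,1)\,b_m m^{-1/2}$, as asserted. I expect the only real work to lie in the case $p\mid\delta'$: correctly identifying $q=\delta'$ from the principality of $\chi$ (using primitivity of $\eps$), and threading the hypothesis $p^\alpha\ge mD$ through the congruence condition $N_1\mid(m\delta-n)_1q$ to collapse the inner sum — together with the easy but necessary check that neither the regularization clause nor a hidden pole interferes.
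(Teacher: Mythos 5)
Your proposal is correct and follows essentially the same route as the paper: the paper factors the argument through two intermediate statements (Theorems \ref{thm:vert-stab} and \ref{sec:vert-stab-2}) that isolate exactly the two mechanisms you use — $\chi(\delta)=0$ killing $T_{1/2}^\delta$ when $\delta$ shares a factor with $N$, and the condition $N_1\mid(m\delta-n)_1q$ together with $\xi(0)=0$ emptying the inner sum otherwise — before specializing to the hypothesis $p^{\alpha+1}\mid N$, $p^\alpha\ge mD$. Your direct case split on $p\mid\delta$ versus $p\mid\delta'$ is an equivalent inlining of that chain, and your checks (identification of $q=\delta'$, the valuation count, and the inapplicability of the $0\cdot\infty$ clause) are all sound.
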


\begin{remark}
  In Theorem \ref{thm:finite-formula} and Theorem
  \ref{thm:stab-vert-sense}, we have averaged over an orthogonal basis
  of all cusp forms, including oldforms.  We have been able to
  average over newforms in Theorem \ref{thm:real-concrete} by
  imposing the assumption that $N$ is a prime for which
  $\eps(-N) = 1$: this assumption guarantees that for any
  oldform $f_0$, the sign in the functional equation for $L(f_0
  \otimes g, s)$ is $-1$, whence $L(f_0 \otimes g, \frac{1}{2}) =
  0$.
  
  In some concrete cases, it is possible to apply Theorem
  \ref{thm:stab-vert-sense} directly to compute an average over a basis of
  newforms.  For example, let $\eps$ be the primitive
  character of conductor $3$.  There is a unique cusp form $f$
  (in fact, a newform) of weight $4$, level $9$, nebentypus
  $\eps$ and a unique
  form $g$ of weight $1$, level $3$, nebentypus $\eps$.
  Since $\mathbb{Q}(\sqrt{-3})$ has $6$ units and $1$ ideal
  class, we have $L(\eps,1) = 2 \pi/(6 \sqrt{3})$, so that
  taking $m = 1$ in Theorem \ref{thm:stab-vert-sense} gives
  \begin{equation}
    \frac{L(f \otimes g,\tfrac{1}{2})}{(f,f)}
    = \frac{2 \pi}{6 \sqrt{3}}
  \end{equation}
  (recall our non-standard definition of the Petersson norm
  (\ref{eq:2})).
\end{remark}

\section{Proofs}\label{sec:proofs}
\subsection{The basic idea}\label{sec:basic-idea}
Let $V$ be the finite-dimensional inner product space $V =
S_k(N,\chi)$
with respect to the (scaled) Petersson inner product
\eqref{eq:1}, which we have normalized to be linear in the second
variable.
For any linear functional $\ell : V \rightarrow
\mathbb{C}$, there exists a unique vector $\ell^* \in
S_k(N,\chi)$ (called the kernel of $\ell$) such that $\ell(f) =
(f,\ell^*) = \overline{(\ell^*,f)}$ for all $f \in V$.  Let
$\{f\}$ be an orthonormal basis of $V$.
Then expanding the kernels $\ell_i^* = \sum
\overline{\ell_i(f)} f$, we find $\ell_2(\ell_1^*) =
\overline{\ell_1(\ell_2^*)} = \sum \overline{\ell_1(f)}
\ell_2(f)$.  In particular, defining the expectation
$\mathbb{E}[\overline{\ell_1} \ell_2]$ of the product
$\overline{\ell_1} \ell_2$ by the formula
\begin{equation}\label{eq:10}
  \mathbb{E}[\overline{\ell_1} \ell_2] = \sum \overline{\ell_1(f)} \ell_2(f),
\end{equation}
we obtain:
\begin{lemma}\label{lem:obvious}
  The definition \eqref{eq:10} is independent
  of the choice of orthonormal basis $\{f\}$,
  and satisfies
  \[
  \mathbb{E}[\overline{\ell_1} \ell_2]
  =
  \overline{\ell_1(\ell_2^*)}
  = \ell_2(\ell_1^*).
  \]
\end{lemma}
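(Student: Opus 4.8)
The plan is to deduce both assertions from the Riesz representation theorem together with the Parseval-type expansion of a vector in an orthonormal basis; since $V$ is finite-dimensional there are no analytic subtleties, so the argument amounts to careful bookkeeping. First I would recall that, since $V = S_k(N,\chi)$ is a finite-dimensional Hilbert space under the scaled Petersson product \eqref{eq:1}, each linear functional $\ell_i$ is represented by a unique kernel $\ell_i^* \in V$ with $\ell_i(f) = (f,\ell_i^*)$ for all $f \in V$, exactly as described just before the statement.

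Next, fix an orthonormal basis $\{f\}$ of $V$ and expand the kernels in it. Writing $\ell_i^* = \sum_f c_{i,f} f$ and pairing against a fixed basis vector using the defining relation for $\ell_i^*$ together with orthonormality, one reads off $c_{i,f} = \overline{\ell_i(f)}$, i.e.\ $\ell_i^* = \sum_f \overline{\ell_i(f)}\, f$. Applying the linear functional $\ell_2$ to $\ell_1^*$ then yields $\ell_2(\ell_1^*) = \sum_f \overline{\ell_1(f)}\,\ell_2(f)$, which is precisely the right-hand side of \eqref{eq:10}; the symmetric manipulation — apply $\ell_1$ to $\ell_2^*$ and conjugate — gives $\overline{\ell_1(\ell_2^*)} = \sum_f \overline{\ell_1(f)}\,\ell_2(f)$ as well, so both expressions equal \eqref{eq:10}.

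For independence of \eqref{eq:10} from the chosen orthonormal basis, I would simply observe that the quantities $\ell_2(\ell_1^*)$ and $\overline{\ell_1(\ell_2^*)}$ just computed are manifestly intrinsic — they refer only to $\ell_1$, $\ell_2$, and the inner product, not to any basis — so their common value $\sum_f \overline{\ell_1(f)}\,\ell_2(f)$ cannot depend on $\{f\}$. Alternatively one can argue directly that a change of orthonormal basis acts on the coefficient vectors $\big(\ell_i(f)\big)_f \in \mathbb{C}^{\dim V}$ by a unitary matrix, and that $\sum_f \overline{\ell_1(f)}\,\ell_2(f)$ is the standard Hermitian pairing of these two vectors, hence unitary-invariant.

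There is essentially no obstacle here: the statement packages standard linear algebra, and the only point requiring genuine care is the placement of complex conjugations, since the inner product \eqref{eq:1} is normalized to be conjugate-linear in its first argument and linear in its second. I would double-check the bars in the expansion $\ell_i^* = \sum_f \overline{\ell_i(f)}\, f$ against the convention fixed for $\ell^*$ immediately before the statement and against the linearity of each $\ell_i$, but nothing deeper than this is involved.
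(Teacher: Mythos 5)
Your argument is correct and is essentially identical to the paper's own (which consists of the two sentences preceding the lemma): expand $\ell_i^* = \sum_f \overline{\ell_i(f)}\,f$ in an orthonormal basis, apply the other functional by linearity, and note that $\ell_2(\ell_1^*)$ and $\overline{\ell_1(\ell_2^*)}$ are basis-independent. Your care with the conjugations is warranted — with the inner product linear in the second slot, the coefficient $c_{i,f}=\overline{\ell_i(f)}$ comes from reading the Riesz relation as $\ell_i(g)=(\ell_i^*,g)=\overline{(g,\ell_i^*)}$, which is the convention the paper's expansion actually uses.
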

For example, let $\lambda_m \in V^*$ be
the normalized Fourier coefficient
\begin{equation}
  \lambda_m : V \ni \sum_{n \geq 1} a_n n^{(k-1)/2} q^n
  \mapsto a_m \in \mathbb{C}.
\end{equation}
Then the kernel $\lambda_m^*$ is a multiple of the $m$th
holomorphic
Poincare series, and (for $k \geq 2$) the Petersson formula
expresses
$\mathbb{E} [ \overline{\lambda_m} \lambda_n] =
\lambda_n(\lambda_m^*)
= \overline{\lambda_m(\lambda_n^*)}$
as $\delta_{m n} + \Delta_{m n}$ where $\delta_{m n}$
is the Kronecker delta
and $\Delta_{m n}$ is a sum of Kloosterman sums weighted by
Bessel functions
(see \eqref{eq:delta}).

Now fix a modular form
$g = \sum b_n n^{(l-1)/2} q^n \in S_l(D,\eps)$
and define
$L_s \in V^*$ for $\Re(s) > 1$
by the series
\begin{equation}
  L_s(f) = \sum \frac{\lambda_n(f) b_n}{n^s}
\end{equation}
and for general $s \in \mathbb{C}$ by meromorphic continuation;
then
\begin{equation}
  \mathcal{M}_s(k,N,\chi,m,g)
  = L(\chi \eps, 2 s)
  \mathbb{E} [ \overline{\lambda _m} L_s].
\end{equation}
Our plan in this paper
is to study the moments $\mathbb{E}[\overline{\lambda_m}L_s]$
via the Petersson formula:
for $\Re(s) > 1$, we have
\begin{equation}\label{eq:11}
  \mathbb{E} [ \overline{\lambda_m} L_s]
  = \sum_{n \geq 1} \frac{b_n \mathbb{E} [\overline{\lambda _m }
    \lambda _n ]}{
    n ^s }
  = \frac{b _m }{ m ^s } + \sum _{n \geq 1 } \frac{b _n \Delta _{m n }}{ n ^s }.
\end{equation}
Theorem \ref{thm:finite-formula}
is obtained by writing
\begin{equation*}
  \Delta_{m n} = \int_{a/c \in \mathbb{Q}, w \in \mathbb{C}}
  e_c(n a) n^{-w} \, d \mu_m(a/c,w)
\end{equation*}
for some measure $\mu_m$,
applying Voronoi summation
to $\sum_n b_n e_c(n a) n^{-s-w}$,
and observing that all but finitely many
of the terms in the resulting expression
for $\mathbb{E} [ \overline{\lambda _m } L_s]$
are zero.
From the perspective of the present method,
the key technical point in the proof is that
the function
\begin{equation}\label{eq:26}
\mathbb{C}
\ni w \mapsto
\frac{
  \Gamma \left(
    \frac{k - 1}{2} + w
  \right)
  \Gamma \left(
    \frac{l - 1}{2} + 1-s-w
  \right)
}{
  \Gamma \left(
    \frac{k - 1}{2} + 1 - w
  \right)
  \Gamma \left(
    \frac{l - 1}{2} + s+w
  \right)
},
\end{equation}
which for typical values of $s$
has an infinitude of poles
in the half-plane
$\Re(w) \leq -(k-1)/2$,
happens to be holomorphic in that half-plane
for $k$, $l$ and $s$ satisfying the conditions
of Theorem \ref{thm:finite-formula}
(compare with the proof of \cite[Prop IV.3.3d]{GZ86}).

To put this method in context, we apply Lemma \ref{lem:obvious}
to $\lambda_m$ and $L_s$; doing so gives a relationship
between the $m$th twisted first moment of $f \mapsto L(f \otimes
g,s)$,
the $m$th Fourier coefficient of the kernel of the linear map
$f \mapsto L(f \otimes g,s)$,
and the $L$-value $L(\lambda_m^* \otimes g,s)$:
\begin{lemma}\label{lem:obvious2}
  $\mathbb{E}[\overline{\lambda_m} L_s]
  = \overline{\lambda_m(L_s^*)}
  = L_s(\lambda_m^*)$.
\end{lemma}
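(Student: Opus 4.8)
The plan is to deduce this from Lemma \ref{lem:obvious} together with meromorphic continuation in $s$. First I would treat the range $\Re(s) > 1$, where $L_s$ is given by an absolutely convergent Dirichlet series and hence is a genuine linear functional on the finite-dimensional space $V = S_k(N,\chi)$. Applying Lemma \ref{lem:obvious} with $\ell_1 = \lambda_m$ and $\ell_2 = L_s$ immediately yields
\[
\mathbb{E}[\overline{\lambda_m} L_s] = \overline{\lambda_m(L_s^*)} = L_s(\lambda_m^*)
\]
for all such $s$.

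Next I would extend the identities to general $s$. The middle equality $\overline{\lambda_m(L_s^*)} = L_s(\lambda_m^*)$ in fact holds verbatim at every $s$ that is not a pole of $L_s$, directly from the conjugate-symmetry of the scaled Petersson inner product \eqref{eq:1}: by definition of the kernel one has $L_s(f) = (f, L_s^*)$ for all $f \in V$, and by definition of $\lambda_m^*$ one has $\lambda_m(f) = (f, \lambda_m^*)$ for all $f \in V$; specializing $f = \lambda_m^*$ in the first relation and $f = L_s^*$ in the second and using $(u,v) = \overline{(v,u)}$ gives
\[
L_s(\lambda_m^*) = (\lambda_m^*, L_s^*) = \overline{(L_s^*, \lambda_m^*)} = \overline{\lambda_m(L_s^*)}.
\]
For the remaining equality $\mathbb{E}[\overline{\lambda_m} L_s] = L_s(\lambda_m^*)$ I would observe that both sides are meromorphic functions of $s$ which agree on the half-plane $\Re(s) > 1$. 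The left side is the finite sum $\sum_f \overline{\lambda_m(f)} L_s(f)$ over an orthonormal basis of $V$, and for each fixed $f$ the function $s \mapsto L_s(f) = \sum_{n \geq 1} \lambda_n(f) b_n n^{-s}$ continues meromorphically to $\mathbb{C}$ by the classical theory of Rankin--Selberg convolutions \cite{Ra39,Se40,Ja72,Li79} (concretely $L_s(f) = L(f \otimes g, s)/L(\chi\eps, 2s)$). Likewise $L_s(\lambda_m^*) = \sum_{n \geq 1} \lambda_n(\lambda_m^*) b_n n^{-s}$ is a Dirichlet series of the same shape attached to the \emph{fixed} cusp form $\lambda_m^*$ (a multiple of the $m$th holomorphic Poincar\'e series) and to $g$, hence also extends meromorphically. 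Two meromorphic functions agreeing on an open subset of $\mathbb{C}$ coincide, which completes the argument.

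I do not anticipate a genuine obstacle: this lemma is essentially a restatement of Lemma \ref{lem:obvious}. The only points meriting care are the interpretation of $L_s^*$ and $L_s(f)$ at a general $s$ via meromorphic continuation, and the compatibility of that continuation with the continuation of $\mathbb{E}[\overline{\lambda_m} L_s]$, which is automatic since taking finite sums, multiplying by constants $\overline{\lambda_m(f)}$, and meromorphic continuation all commute. The statement is included to put the method in context: it exhibits $\mathbb{E}[\overline{\lambda_m} L_s]$ simultaneously as (a multiple of) the $m$th Fourier coefficient of the kernel $L_s^*$ of the linear map $f \mapsto L(f \otimes g, s)$ and, up to the elementary factor $L(\chi\eps, 2s)$, as the value $L(\lambda_m^* \otimes g, s)$; it is not needed for the proof of Theorem \ref{thm:finite-formula}.
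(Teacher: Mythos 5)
Your proposal is correct and coincides with the paper's (implicit) argument: the lemma is stated as an immediate application of Lemma \ref{lem:obvious} with $\ell_1 = \lambda_m$ and $\ell_2 = L_s$, the only point of care being that $L_s$ is a well-defined linear functional on the finite-dimensional space $V$ at each $s$ away from poles of the continuation. Your additional remarks on meromorphic continuation are harmless but not needed, since once $L_s \in V^*$ is defined the identity holds pointwise in $s$ with no analytic continuation of the identity itself required.
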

When $g$ is the theta series attached to a class group
character of an imaginary quadratic field,
Gross-Zagier \cite[Ch. IV]{GZ86}
compute $\lambda_m(L_{1/2}^*)$
roughly as follows:
the Rankin-Selberg method shows that $L(f \otimes g, s)
= (f, g E_s)_{\Gamma_0(N D) \backslash \mathbb{H}}$
for a non-holomorphic Eisenstein series $E_s$ of level $N D$,
so the kernel $L_{1/2}^*$ is the holomorphic projection
of $\Tr_{N}^{N D}(g E_{1/2})$.

Goldfeld-Zhang \cite{GoZh99}
use the identity $\overline{\lambda_m(L_s^*)}
= L_s(\lambda_m^*)$ of Lemma \ref{lem:obvious2}
and the Petersson formula for $\lambda_n(\lambda_m^*)$
to compute $L_s(\lambda_m^*)
= \sum_n b_n n^{-s} \lambda_n(\lambda_m^*)$
for any $s \in \mathbb{C}$,
with the motivation of giving a simpler
and more general derivation
of Gross-Zagier's result.
The basic idea of writing the Fourier coefficient
of such a kernel function as an infinite linear combination
of Poincar\'{e}  series
had been raised
(but not carried out) by Zagier
\cite{MR0485703}
in the context of $L(\sym^2 f, s)$ (see especially
\cite[p. 38]{MR0485703}).

When $g$ is the theta series attached to a class group
character of an imaginary quadratic field, Michel-Ramakrishnan
used the identity $\mathbb{E} [ \overline{\lambda_m } L_{1/2}]
= \overline{\lambda_m(L_{1/2}^*)}$ of Lemma \ref{lem:obvious2}
and the Gross-Zagier computation of $\lambda_m(L_{1/2}^*)$ to
study the twisted first moments
$\mathbb{E}[\overline{\lambda_m}L_{1/2}]$.

Feigon-Whitehouse \cite{FW08} generalized some of the work of
\cite{MR07} to the case that $f$ lives in a family of
holomorphic Hilbert modular forms over a totally real number
field and $g$ is induced by an idele class character of a CM
extension.  By Waldspurger's formula, they relate $L(f \otimes
g, \tfrac{1}{2})$ to a toral period of the Jacquet-Langlands
correspondent of $f$ on a suitable quaternion algebra, which
they then average by a relative trace formula.

Thus the method of Goldfeld-Zhang applies in the situation we
consider, but for two reasons their results are not directly
applicable here.  First, the only essential assumptions we place
on $k,N,\chi$ and $l,D,\eps$ are that $D$ be squarefree;
Goldfeld-Zhang impose the most restrictive assumptions that
$\chi$ be trivial, $D$ squarefree and $\eps$ primitive.  Second,
and more importantly,
the formulas that they obtain and some of the calculations in
their arguments are not sufficiently detailed for our purposes;
we elaborate on this point in the next section.

\subsection{On some results of Goldfeld-Zhang}\label{sec:some-results-goldf-2}
The formula for $\mathcal{M}_s(k,N,\chi,m,g)$
asserted by Theorem \ref{thm:finite-formula}
follows from a more general result,
which we now prepare to state.
For $s \in \mathbb{C}$ and $y \in \mathbb{R}_+^\times$,
set
\begin{equation}
  I_s(y) =
  \int_{\eps-\frac{k-1}{2}-i \infty }^{\eps-\frac{k-1}{2}+i
    \infty }
  \frac{G_k(w)}{G_l(s+w)}
  y^{-w} \, \frac{d w}{2 \pi i },
  \quad
  G_k(w) := 
  \frac{\Gamma(\frac{k-1}{2} + w)}
  {\Gamma(\frac{k-1}{2}+1-w)}.
\end{equation}
Here the poles of the integrand
(as a function of $w$)
are contained in the sets
$-(k-1)/2 + \mathbb{Z}_{\geq 0}$
and $(l+1)/2 - s + \mathbb{Z}_{\geq 0}$,
so the integral crosses no poles
provided that $\Re(s) < (k+l)/2 - \eps$.
By Stirling in the form $|G_k(w)| \asymp |\Im(w)|^{2\Re(w)-1}$ (uniformly in vertical strips for $w$
away from poles),
the integral converges normally for $1/2 < \Re(s)$,
thereby defining a holomorphic function in the range
$1/2 < \Re(s) < (k+l)/2 - \eps$.
By a contour shifting argument
as in \cite[Proposition
8.3]{GoZh99}, one computes that
\begin{equation}\label{eq:Isoln}
  I_s(y)
  = \begin{cases}
    \displaystyle
    y^{\frac{k-1}{2}}
    (1-y)^{\frac{l-k}{2}-1+s}
    \frac{\Gamma(\frac{l+k}{2}-s)}{\Gamma(k)
      \Gamma(\frac{l-k}{2}+s)}
    F \left(
      \efrac{
        \frac{k-l}{2}+s,
        \frac{k-l}{2}+1-s
      }{
        k
      }
      ; \frac{y}{y-1}
    \right)
    & y \in (0,1) \\
    \displaystyle
    y^{\frac{1-k}{2}}
    (y-1)^{\frac{k-l}{2}-1+s}
    \frac{\Gamma (\frac{l+k}{2}-s)}{\Gamma (l) \Gamma (\frac{k
        - l}{2} + s)}
    F \left(
      \efrac{\frac{l - k }{2}+ 1 - s , \frac{l - k }{2 } +
        s}{l};
      \frac{1}{1 - y}\right) & y > 1 \\
    \displaystyle
    \frac{\Gamma (\frac{l + k }{2 } - s ) \Gamma (2 s - 1 ) }{
      \Gamma (\frac{k - l }{ 2 } + s ) \Gamma (\frac{l - k }{ 2 }+s)
      \Gamma (\frac{l + k }{2} - 1 + s)} & y = 1.
  \end{cases}
\end{equation}
Here the hypergeometric function
$F = {}_2 F _1$ is defined for $|z| < 1$
by the absolutely convergent series
$F(z) = \sum_{n \geq 0} \frac{(a)_n (b)_n}{(c)_n n!} z^n$,
so \eqref{eq:Isoln} gives the meromorphic
continuation of $s \mapsto I_s(y)$ (for fixed $y \in
\mathbb{R}_+^\times$) to the complex plane.
A direct computation with \eqref{eq:Isoln}
shows that the present definition of $I_s(y)$
specializes to that given in the statement of Theorem
\ref{thm:finite-formula};
the key calculation is that for $y = 1$ and
$s$ satisfying the conditions  of
Theorem \ref{thm:finite-formula}, we have $I_s(1) = 0$ unless
$s = 1/2$, in which case
$I_{1/2}(1) = i^{l-k+1}/2$.

The definitions of $T_s^\delta$ and $S_s^\delta(x)$
given in the statement of Theorem \ref{thm:finite-formula}
make sense as meromorphic functions of
$s \in \mathbb{C}$.
The functions $s \mapsto T_s^\delta$
and $s \mapsto S_s^\delta(x)$ for $x \neq 0$ are entire.
We have $S_s^\delta(0) = 0$ (for all $s$)
unless $\xi$ is the trivial character
(equivalently,
unless $\chi \eps_{\delta '}$ is a principal character),
in which case $S_s^\delta(0)$ is a nonzero multiple
of $L(\eps_\delta, 2 s - 1)$,
which is entire with the exception
of a simple pole at $s = 1$ when $\eps _\delta$ is
a principal character.
In order that $\xi$ be trivial and $\eps_\delta$ principal,
we must have in particular $\chi \eps (-1) = 1$,
so that $k \pm l$ is even.
In that case, $I_s(1)$ has a simple zero at $s = 1$
unless $k = l$ thanks to the first or second gamma factor in its
denominator.

The maps $s \mapsto I_s(y)$ are holomorphic
in the half-plane $\Re(s) < k+l$
except for a simple pole
at $s=1/2$ when $y = 1$ and $k \pm l$ is
even.
If $k \pm l$ is even,
then
$S_s^\delta(0)$ fails to vanish to order
at least one at $s = 1/2$ only if
$\chi \eps_{\delta '}$ and $\eps_\delta$ are principal.

It follows that
for $m,n \in \mathbb{N}$, the function
$s \mapsto
c_{m,s}^\delta(n) := I_s(m \delta / n) S_s^\delta(m \delta - n)$
is holomorphic for $\Re(s) < (k+l)/2$
except possibly for simple poles at $s \in \Sigma$
when $n = m \delta$,
where
\begin{equation}\label{eq:18}
  \Sigma_\delta = \begin{cases}
    \{1/2, 1\} & \text{if } k = l \text{ and both }
    \chi \eps_{\delta '} \text{ and }
    \eps_{\delta} \text{ are principal,} \\
    \{1/2\} & \text{if } k \neq  l, k \pm l \text{ is even, and both }
    \chi \eps_{\delta '} \text{ and }
    \eps_{\delta} \text{ are principal,} \\
    \emptyset  & \text{otherwise.}
  \end{cases}
\end{equation}
We turn to the asymptotics
of $c_{m,s}^\delta(n)$ as $n \rightarrow \infty$.
For $y < 1$,
a Taylor expansion shows that
\begin{equation}\label{eq:22}
  I_s(y) =
  y^{\frac{k-1}{2}}
  \frac{\Gamma(\frac{l+k}{2}-s)}{\Gamma(k)
    \Gamma(\frac{l-k}{2}+s)}
  (1 + O(y)),
\end{equation}
where the implied constant is uniform
for $s$ in any fixed compact
subset of the half-plane
$\Re(s) < (k+l)/2$.
For $x \neq 0$,
$|x| \rightarrow \infty$,
we have
\begin{align*}
  \left\lvert
    S_s^\delta(x)
  \right\rvert
  &=
  \left\lvert
    \sum _{e | (M,x_2)}
    \mu \left( \frac{M}{e} \right)
    \frac{e}{M}
    \sigma [\eps_{\delta} \xi |.|^{1-2s}]
    \left( \frac{x_2}{e} \right)
  \right\rvert \\
  &\leq
  \sigma_{|.|^{-1}}(M)
  \sigma [ |.|^{1-2 \Re(s)}]
  (x_2)
  \ll \sigma [ |.|^{1-2 \Re(s)}](x) \\
  &\ll
  |x|^{\max(
    0,
    1 - 2 \Re(s))
    +o(1)},
\end{align*}
where the implied constants is independent
of $s$ and $x$.
Thus as $n \rightarrow \infty$,
we have
\begin{equation}\label{eq:19}
  c_{m,s}^\delta(n) \ll
  n^{-\frac{k-1}{2} + \max(0,1 - 2 \Re(s)) + o(1)},
\end{equation}
where the implied constants
are independent of $n$ and uniform
for $s$ as above.
\begin{lemma}\label{lem:convergence-of-basic-kernel-sum}
  Let $k,N,\chi,l,D,\eps$ satisfy \eqref{eq:assumptions}.
  For $g \in S_l(D,\eps)$, $m \in \mathbb{N}$,
  and $s \in \mathbb{C}$,
  set
  \begin{equation}\label{eq:12}
    c_m(s)
    = 
    L(\chi \eps , 2 s)
    \frac{b_m}{m^s}
    + 2 \pi i^{-k}
    \sum_{\delta|D}
    T_s^\delta \mathop{\sum_{n=1}^{\infty}}_{N_1 | (m \delta - n)_1 q}
    \frac{b_n^\delta }{n^{1-s}}
    I_s \left( \frac{m \delta }{n} \right)
    S_s^\delta(m \delta - n).
  \end{equation}
  The individual terms in the series
  $c_m(s)$ are holomorphic
  functions of $s$
  for $\Re(s) < (k+l)/2$ except
  possibly for simple
  poles in the set $\Sigma
  = \bigsqcup_{\delta|D} \Sigma_\delta$,
  with $\Sigma_\delta$ as in \eqref{eq:18}.
  The series $c_m(s)$ converges normally
  to a holomorphic function
  of $s$ in the domain
  $\Xi =
  \left\{
    \tfrac{3-k}{2} <
    \Re(s) < \tfrac{k-1}{2}
  \right\}
  \setminus \Sigma$.
\end{lemma}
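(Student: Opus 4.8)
The plan is to establish the two assertions of Lemma~\ref{lem:convergence-of-basic-kernel-sum} in turn: first the meromorphy (with controlled poles) of the individual summands, then the normal convergence of the series on $\Xi$. The meromorphy claim is essentially a bookkeeping exercise assembling facts already collected in this section. Each term is a product of $T_s^\delta$ (entire by the discussion above), $b_n^\delta n^{s-1}$ (entire), $I_s(m\delta/n)$, and $S_s^\delta(m\delta-n)$. For $n \neq m\delta$ we have $m\delta/n \neq 1$ and $m\delta - n \neq 0$, so both $I_s(\cdot)$ and $S_s^\delta(\cdot)$ are entire in $s$ for $\Re(s) < (k+l)/2$ by the remarks preceding the lemma. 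For $n = m\delta$ we get the factor $I_s(1)S_s^\delta(0)$, and the analysis of $\Sigma_\delta$ in \eqref{eq:18} — combined with the observation that $S_s^\delta(0)$ is a nonzero multiple of $L(\eps_\delta, 2s-1)$ when $\xi$ is trivial, together with the pole structure of $I_s(1)$ at $s = 1/2$ — shows the only possible poles lie in $\Sigma_\delta$, and are simple. The leading diagonal term $L(\chi\eps, 2s)b_m m^{-s}$ is entire in $s$ (or has a pole only at $s = 1/2$ when $\chi\eps$ is principal, which is already accounted for when the corresponding $\Sigma_\delta$ condition holds). This disposes of the first assertion.

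For the convergence statement, the key input is the tail bound \eqref{eq:19}: as $n \to \infty$,
\[
  c_{m,s}^\delta(n) \ll n^{-\frac{k-1}{2} + \max(0, 1 - 2\Re(s)) + o(1)},
\]
uniformly for $s$ in compact subsets of $\Re(s) < (k+l)/2$. Multiplying by the remaining factor $b_n^\delta n^{s-1}$ and using the Hecke/Deligne-type bound $b_n^\delta \ll n^{o(1)}$ (which holds for Fourier coefficients of holomorphic cusp forms at any cusp — or, more crudely, the trivial bound $b_n^\delta \ll n^{1/2 + o(1)}$ suffices here), the general term of the series $c_m(s)$ is
\[
  \ll n^{\Re(s) - 1 - \frac{k-1}{2} + \max(0, 1 - 2\Re(s)) + o(1)}.
\]
When $\Re(s) \geq 1/2$ the exponent is $\Re(s) - 1 - \frac{k-1}{2} + o(1)$, which is $< -1$ precisely when $\Re(s) < \frac{k-1}{2}$; when $\Re(s) < 1/2$ the exponent is $-\Re(s) - \frac{k-1}{2} + o(1)$, which is $< -1$ precisely when $\Re(s) > \frac{3-k}{2}$. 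In either regime the series is dominated by a convergent $p$-series (uniformly on compacta inside $\Xi$), giving normal convergence; the limit is then holomorphic on $\Xi \setminus \Sigma$ as a locally uniform limit of holomorphic functions, and one checks the finitely many points of $\Sigma$ do not cause trouble since they were excised.

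\textbf{Main obstacle.} The genuinely delicate point is not the crude convergence estimate but ensuring the bookkeeping around the congruence condition $N_1 \mid (m\delta - n)_1 q$ in the sum, and confirming that the pole locations really are confined to $\Sigma_\delta$ rather than appearing term-by-term from, say, the $\Gamma(\frac{k-l}{2}+s)^{-1}$ and $\Gamma(2s-1)$ factors buried in $I_s(1)$ combined with the $L(\eps_\delta, 2s-1)$ inside $S_s^\delta(0)$. One must verify that for $n = m\delta$ the product $I_s(1)S_s^\delta(0)$ has its apparent poles either cancel against zeros or land exactly at the claimed points — this is the content of the careful $y = 1$ analysis already performed above (the statement that $I_s(1) = 0$ for $s$ in the critical range unless $s = 1/2$), and invoking it correctly is the crux. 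The convergence argument itself, once \eqref{eq:19} is in hand, is routine.
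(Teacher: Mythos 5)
Your argument is correct and is essentially the paper's own proof: both rest on the pole bookkeeping for $T_s^\delta$, $S_s^\delta$, $I_s(1)$ from the discussion preceding the lemma, and both reduce normal convergence to the tail bound \eqref{eq:19} times $b_n^\delta n^{\Re(s)-1+o(1)}$, yielding the condition $\max(\Re(s),1-\Re(s))<\tfrac{k-1}{2}$ that defines $\Xi$. One small caveat: your parenthetical claim that the trivial bound $b_n^\delta \ll n^{1/2+o(1)}$ ``suffices'' is not quite right, since it would only give convergence on the narrower strip $|\Re(s)-\tfrac12|<\tfrac{k-2}{2}$; the Deligne-type bound $b_n^\delta \ll n^{o(1)}$ (which you do use in the main computation, as does the paper) is what is actually needed for the full strip $\Xi$.
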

In view of \eqref{eq:19}
and the discussion preceeding it,
the proof of Lemma \ref{lem:convergence-of-basic-kernel-sum}
amounts to noting that
\[
\frac{b_n^\delta }{n^{1-s}}
I_s \left( \frac{m \delta }{n} \right)
S_s^\delta(m \delta - n)
\ll
n^{-1 - \frac{k-1}{2}
  + \max(\Re(s),1 - \Re(s)) + o(1)},
\]
for $n \neq m \delta$,
the RHS of which is $\ll n^{-1-\delta}$
for some $\delta > 0$
if and only if $\max(\Re(s), 1 - \Re(s)) < (k-1)/2$.
\begin{theorem}\label{thm:general-kernel}
  With notation and assumptions as in Lemma
  \ref{lem:convergence-of-basic-kernel-sum},
  we have
  $L(\chi \eps , 2 s)
  \mathbb{E}[\overline{\lambda_m} L_s]
  =    c_m(s)$.
\end{theorem}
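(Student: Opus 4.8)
The plan is to establish the identity first for $s$ in the vertical strip $5/4 < \Re(s) < (k-1)/2$, where all of the manipulations below are licit by absolute convergence, and then to extend it to all of $\Xi$ by analytic continuation. This strip is nonempty precisely because $k \geq 4$; it lies inside $\Xi$ (its lower edge exceeds $1 > 1/2$, so it meets none of $\Sigma$), the left-hand side $L(\chi\eps,2s)\,\mathbb{E}[\overline{\lambda_m}L_s]$ is a finite linear combination of Rankin--Selberg $L$-functions and hence meromorphic on $\mathbb{C}$, and $c_m(s)$ is holomorphic on $\Xi$ by Lemma \ref{lem:convergence-of-basic-kernel-sum}; so agreement on the strip forces agreement throughout $\Xi$. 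Within the strip we recall from \eqref{eq:11} that $\mathbb{E}[\overline{\lambda_m}L_s] = b_m m^{-s} + \sum_{n \geq 1} b_n n^{-s}\,\Delta_{mn}$, where $\Delta_{mn}$ is the Kloosterman term of \eqref{eq:delta} --- a sum over moduli $c \equiv 0\,(N)$ of $c^{-1}S_\chi(m,n;c)\,J_{k-1}(4\pi\sqrt{mn}/c)$, with $S_\chi$ the $\chi$-twisted Kloosterman sum. By Deligne's bound $b_n \ll n^{o(1)}$ and the Weil bound for $S_\chi$ one checks $\Delta_{mn} \ll_m n^{1/4+o(1)}$, so the sum over $n$ (and the implicit sum over $c$) converges absolutely for $\Re(s) > 5/4$. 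Multiplying by $L(\chi\eps,2s)$, the diagonal term yields the leading term $L(\chi\eps,2s)\,b_m/m^s$ of $c_m(s)$, and it remains to identify $L(\chi\eps,2s)\sum_n b_n n^{-s}\Delta_{mn}$ with the $\delta$-sum.

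The core step is \textbf{Voronoi summation}. I would open $S_\chi(m,n;c)$ into additive characters so as to display the $n$-dependence of the inner sum as $e_c(\bar a n)$, and then, for each residue $a$, apply Voronoi summation in the form of a summation formula with the test function $n \mapsto n^{-s}J_{k-1}(4\pi\sqrt{mn}/c)$ to
\[
  \sum_{n \geq 1} b_n\, e_c(\bar a n)\, n^{-s}\,
  J_{k-1}\!\bigl(4\pi\sqrt{mn}/c\bigr).
\]
The functional equation underlying this formula for $g \in S_l(D,\eps)$ is governed by the Fourier expansions $g|W_\delta = \sum_n b_n^\delta n^{(l-1)/2}q^n$ at the cusps of $\Gamma_0(D)$ indexed by the divisors $\delta \mid D$; it converts the sum above into a sum over $\delta \mid D$ of dual sums $\sum_n b_n^\delta\, n^{-(1-s)}\,\mathcal{H}_s^\delta(n)$ (themselves carrying additive twists in $n$, and weighted by an arithmetic prefactor depending on $c$, $a$ and $\gcd(c,D)$), where the Hankel-type transform $\mathcal{H}_s^\delta(n)$ of the test function is, by a Weber--Schafheitlin--type Bessel integral identity, a constant multiple of $I_s(m\delta/n)$; this last computation is exactly \cite[Proposition 8.3]{GoZh99}, recorded here as \eqref{eq:Isoln}, whose Mellin--Barnes kernel is the function \eqref{eq:26}.

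It remains to reassemble. One sums the dual expressions over the residues $a$ modulo $c$ and over the moduli $c \equiv 0\,(N)$; the attendant sums are evaluations of Ramanujan and Gauss sums that, in the stated generality, collapse to the explicit $s$-dependent constant $T_s^\delta$ --- which packages $\chi(\delta)$, the Gauss sum $\tau(\xi)$, the values of $\eps_\delta$ and $\eps_\delta\xi$ at $\delta'$, $q$ and $M = [N,\delta']_2$, and the normalization $(\delta/4\pi^2)^{1/2-s}$ --- together with the arithmetic factor $S_s^\delta(m\delta - n)$, namely the M\"obius sum over $e \mid (M,x_2)$ of Theorem \ref{thm:finite-formula}, where $S_s^\delta(0)$ (a nonzero multiple of $L(\eps_\delta,2s-1)$) arises from the case that the associated character $\xi$ is trivial; the support of these sums imposes the divisibility constraint $N_1 \mid (m\delta - n)_1 q$ on the range of $n$. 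Combined with the $w$-integral --- which is $I_s(m\delta/n)$ by \eqref{eq:Isoln} --- this produces exactly the $\delta$-sum of $c_m(s)$, establishing the identity on the strip; by the remark above it then holds on all of $\Xi$.

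The \textbf{main obstacle} is the arithmetic rather than the analysis: one must carry out Voronoi summation, and the ensuing sums over residues and moduli, for a fixed form $g$ of \emph{squarefree but otherwise arbitrary} level $D$ with an \emph{arbitrary} (not necessarily primitive) nebentypus $\eps$, twisted by a Kloosterman sum attached to an \emph{arbitrary primitive} character $\chi$ mod $N$ with $N$ \emph{not} assumed coprime to $D$ --- precisely the generality in which \cite{GoZh99} is too terse for our purposes. All of this is organized by the $\delta$-dependent bookkeeping of \S\ref{sec:main-result}: the primitive character $\xi$ mod $q$ inducing $\chi\eps_{\delta'}$, the prime-to-$q$ factorizations $A = A_1 A_2$, and the integer $M = [N,\delta']_2$; the cusp coefficients $b_n^\delta$ are supplied by Theorem \ref{thm:eta-value} and Atkin--Li theory when $\eps$ is primitive and $g$ a newform, and by the general oldform/cusp analysis otherwise. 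The secondary, more routine, obstacle is purely analytic: justifying the interchanges and the Bessel transform identity with uniformity in $s$ confines the derivation to the strip $5/4 < \Re(s) < (k-1)/2$, which is the source of the hypothesis $k \geq 4$; reaching the weaker range $k \geq 2$ that is expected to hold would require a more delicate treatment exploiting cancellation among the Kloosterman sums.
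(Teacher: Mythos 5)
Your proposal is correct and follows essentially the same route as the paper: establish the identity on the strip $5/4 < \Re(s) < (k-1)/2$ (nonempty exactly when $k \geq 4$) via the Petersson formula, opening the Kloosterman sums, Voronoi summation through the functional equation \eqref{eq:func-eqn} for additive twists at the cusps $\delta \mid D$, and the Gauss--Ramanujan and M\"obius sum evaluations producing $T_s^\delta$ and $S_s^\delta$, then extend to $\Xi$ by analytic continuation against the normal convergence supplied by Lemma \ref{lem:convergence-of-basic-kernel-sum}. You also correctly locate both the source of the restriction $k \geq 4$ (the overlap of the two ranges of validity) and the main labor (the character bookkeeping in the generality where $\chi$, $\eps$ need not be primitive and $(N,D) \neq 1$ is allowed), matching the paper's Lemmas \ref{lem:convergence-bla-bla-bla-bla} and \ref{lem:ramsum}.
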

We shall prove Theorem \ref{thm:general-kernel}
in \S\ref{sec:proof-theor-refthm:g}.
In the special case that
$\chi = 1$ and $\eps$ is primitive, Theorem
\ref{thm:general-kernel} is similar to what one would obtain by
combining the results \cite[Theorem 6.5]{GoZh99},
\cite[Proposition 7.1]{GoZh99}, and \cite[Proposition
8.3]{GoZh99} of Goldfeld-Zhang
and applying the identity
$L_s(\lambda_m^*) = \mathbb{E}[\overline{\lambda_m} L_s]$
given in Lemma \ref{lem:obvious2}.
The proof we shall give
follows their method, but the result that we obtain differs in
the following ways.
\begin{enumerate}
\item In \eqref{eq:12}, we have restricted the
  sum over $n \geq 1$ by the condition $N_1 | (m \delta - n)_1
  q$, whereas in \cite{GoZh99}, the stronger condition $N | (m
  \delta - n)_1 q$ is imposed (in \cite{GoZh99} one takes $\chi
  = 1$ and $\eps$ primitive, so that $q = \delta '$); it is not
  hard to see from the arguments of \cite{GoZh99} that this
  disagreement is accounted for by a typo.
\item In \cite{GoZh99},
  $i^k$ appears where we have written $i^{-k}$. These are
  equal when $k$ is even, which is the case in \cite{GoZh99};
  $i^{-k}$ is the correct expression when $k$ is odd.
\item The factor $i^{-l}$ appears in \cite{GoZh99} where we have
  written $i^l$; this difference is more serious (giving the
  wrong answer in the important case that $l$ is odd) and results from the propagation
  of a sign error, as we shall explain in \S\ref{sec:proof-theor-refthm:g}.
\item In the proof of \cite[Proposition 8.3]{GoZh99}
  the transformation identity for the hypergeometric function
  is misapplied, resulting in a formula for $I_s(y)$, $y>1$ with ``$k$'' passed as
  the third argument to the hypergeometric function rather
  than ``$l$'' as we have written it above.
\item When $g$ is non-cuspidal, a certain non-entire function
  ``$L_g(s,a/c)$''
  whose poles are determined in \cite[Proposition 4.2]{GoZh99}
  is claimed to be entire in the middle of \cite[p738]{GoZh99};
  some formal manipulations that follow are not valid and
  ultimately yield a formula that is missing a term under
  certain circumstances (including the important case that $k=2$,
  $l=1$, and $g$ is the theta series attached to the trivial
  class group character of an imaginary quadratic extension).
  We limit ourselves to the case that $g$ is cuspidal
  so as not to have to worry about this.

\item    In \cite{GoZh99},
  the analogue of Theorem \ref{thm:general-kernel} appears
  without any restriction on $k \in \mathbb{N}$ or $s \in
  \mathbb{C}$; see for instance the remarks following
  \cite[Prop 3.6]{GoZh99}, where it is asserted
  without proof that
  the analogue of the
  series \eqref{eq:12} converges absolutely
  for \emph{all} values of $s$.
  A closer inspection, as given above
  and summarized in Lemma
  \ref{lem:convergence-of-basic-kernel-sum},
  seems to reveal that such
  series
  converge absolutely only for $s$ in the range
  $(3-k)/2 < \Re(s) < (k-1)/2$.
  When $k = 2$, this range is empty,
  so it is not clear how to interpret the results of \cite{GoZh99}.
  Even when $k = 3$, it seems unclear how to make precise
  certain formal manipulations in \cite{GoZh99}
  without substantially different arguments.
  We record a careful derivation of
  Theorem \ref{thm:general-kernel}
  for $k \geq 4$ in \S\ref{sec:proof-theor-refthm:g}.
\item At the beginning of \cite[\S7]{GoZh99},
  our calculations indicate that one should take
  ``$\eps^\delta  = \eps$'' rather than
  ``$\eps^\delta = \eps_\delta ^{-1} \eps_{\delta '}$''
  for most of the formulas that follow to be correct;
  thus, most instances of $\eps_{\delta '}$ in our formula
  for $S_s^\delta$
  appear as $\eps_{\delta '}^{-1}$
  in \cite[\S7]{GoZh99}.
  (The definition of $S^\delta(s,B)$
  in \cite[\S6]{GoZh99} should have
  ``$\eps_{\delta '}^{-1}(\bar{r})$,'' where $\bar{r}$
  denotes the inverse of $r$ modulo $c$ (a multiple of $\delta
  '$),
  rather than ``$\eps_{\delta '}^{-1}(r)$'' as is written;
  this follows directly from the fact that
  ``$L_g(s+w,\bar{r}/c)$'' appears
  in \cite[6.2]{GoZh99} and that
  ``$L_g(s,a/c) = \eps_{\delta '}^{-1}(a) \times \dotsb$''
  by \cite[Prop 4.2]{GoZh99}.
  To confuse matters further, there is a typo
  in the statement of
  \cite[Lemma 5.3]{GoZh99}: $\eps_{\delta '}$ should be replaced
  by $\eps_{\delta '}^{-1}$ in the definition of the Gauss sum.
  However, it seems that a corrected form of that lemma is
  what is actually
  used in the rest of the paper.)
  To avoid confusion, we carry out the relevant calculations
  in \S\ref{sec:proof-theor-refthm:g}.
\end{enumerate}

\begin{proof}[Proof that Theorem \ref{thm:general-kernel} implies Theorem \ref{thm:finite-formula}]
  Suppose, in
  addition to what is already assumed in Theorem
  \ref{thm:general-kernel}, that $2 s_0$ is an integer of the same
  parity as $k \pm l$, that $k > l$, and that
  $1 - \tfrac{k-l}{2} \leq
  s_0 \leq \tfrac{k-l}{2}$.
  If $n > m \delta$ in the definition of $c_m(s)$, then
  $s \mapsto S_s^\delta(m \delta - n)$ is entire,
  while
  $s \mapsto I_s(m \delta / n)$ is the product of an entire
  function
  with
  $s \mapsto
  \Gamma( \tfrac{l + k}{2} - s )
  / \Gamma( \tfrac{l - k}{2} +  s )$, the latter of which
  vanishes at $s = s_0$.
  Therefore the infinite sum over $n \in \mathbb{N}$ in the definition of
  the series $c_m(s_0)$
  truncates to a finite sum over $n \leq m \delta$.
  To complete the proof of Theorem \ref{thm:finite-formula},
  it remains only to show that
  \begin{equation}\label{eq:20}
    L(\chi \eps, 2 s_0) \mathbb{E}[\overline{\lambda_m} L_{s_0}] =
    c_m(s_0).
  \end{equation}

  The above conditions imply $\Sigma = \emptyset$, so
  the series \eqref{eq:12} defining
  $c_m(s)$ converges normally on
  the strip
  $\Xi = \{s :
  1 - \tfrac{k - 1}{2} < \Re(s) < \tfrac{k - 1}{2}\}$.
  If $s_0 \in \Xi$, then \eqref{eq:20}
  is true by Theorem \ref{thm:general-kernel}.
  The only case in which $s_0 \notin \Xi$
  is if $l = 1$ and either $s_0 = 1 - \tfrac{k -
    1}{2}$
  or $s_0 = \tfrac{k - 1}{2}$,
  so that $s_0$
  is on the boundary of the strip of absolute convergence of the
  series $c_m(s)$.
  Since $s \mapsto L(\chi \eps, 2 s)
  \mathbb{E}[\overline{\lambda_m} L_{s}]$
  is continuous at $s = s_0$, Theorem \ref{thm:general-kernel}
  reduces our claim \eqref{eq:20}
  to showing that
  $\lim_{s \rightarrow s_0} c_m(s) = c_m(s_0)$,
  or even that
  \begin{equation}\label{eq:21}
    \lim_{s \rightarrow s_0}
    (s-s_0)
    \mathop
    {
      \sum_{n > m \delta}
    }
    _
    {
      \substack
      {
        N_1 | (m \delta - n)_1 q \\
      }
    }
    \frac{
      b_n^\delta
    }{
      n^{1-s}
    }
    I_s
    \left(
      \frac
      {
        m \delta
      }
      {
        n
      }
    \right)
    S_s^\delta
    (
    m \delta - n
    )
    = 0
  \end{equation}
  for each $\delta \mid D$.
  Here all limits $s \rightarrow s_0$ are taken over $s$ tending
  to $s_0$ from inside the nonempty open strip $\Xi$.
  While \eqref{eq:21} is formally true,
  a blind interchange of the limit $s \rightarrow s_0$
  with summation over $n$ in \eqref{eq:21}
  is not permitted.
  Using the Taylor expansion \eqref{eq:22}
  and inserting the definition of $S_s^\delta(m \delta - n)$,
  we reduce \eqref{eq:21}
  to showing that
  for each $\delta \mid D$ and $e \mid [N, \delta ']_2$,
  \begin{equation}\label{eq:23}
    \lim_{s \rightarrow s_0}
    (s-s_0)
    \mathop
    {
      \sum_{n=1}^{\infty}
    }
    _
    {
      \substack
      {
        N_1 | x_1 q \\
        e | x_2 \\
      }
    }
    \frac{
      b_n^\delta
    }{
      n^{1-s + (k-1)/2}
    }
    (\eps _\delta |.|^{1-2s})(x_1)
    \bar{\xi}(x_2)
    \sigma [ \eps_\delta \xi |.|^{1-2s} ]
    \left( \frac{x_2 }{ e}
    \right)
    =0,
  \end{equation}
  where for brevity we write $x = m \delta - n$.
  We consider only the case $s_0 = 1 - \tfrac{k - 1}{2}$,
  as the case $s_0 = \tfrac{k - 1}{2}$
  may be treated in an identical manner thanks to the identity
  $\sigma[\eta](t) = \sigma[\eta ^{-1}](t) \eta(t)$,
  which holds for all multiplicative functions $\eta : \mathbb{N}
  \rightarrow \mathbb{C}$
  and all $t \in \mathbb{N}$.
  Thus, let us write $s = 1 - \tfrac{k-1}{2} + \eps$, where
  $\eps$ is small, positive, and tending to $0$.
  By partial summation, \eqref{eq:23} follows
  from a bound of the shape
  \begin{equation}\label{eq:24}
    \mathop
    {
      \sum_{n \sim X}
    }
    _
    {
      \substack
      {
        N_1 | x_1 q \\
        e | x_2 \\
      }
    }
    b _n ^\delta
    \frac{\sigma [ \eps _\delta \xi \lvert . \rvert ^{k - 2 - 2 \eps
      }](x_2/e)}{n^{k-2-2 \eps}}
    (\eps _\delta |.|^{1-2s})(x_1)
    \bar{\xi}(x_2)
    \ll X^{1-\alpha}
  \end{equation}
  for some fixed $\alpha > 0$ and each $X > m \delta$,
  where $n \sim X$ means $X \leq n < 2 X$.
  Here we require the implied constant to be uniform in $X$ and
  $\eps$, but not necessarily
  in any of the other variables.
  Any of the standard methods for treating shifted convolution
  problems (see e.g. \cite[\S4.4]{MR2331346})  
  applies here to establish \eqref{eq:24} for each $\alpha < 1/2$,
  thereby completing the deduction of Theorem
  \ref{thm:finite-formula}
  from Theorem \ref{thm:general-kernel}
  (and indeed, the ``spectral method'' for doing so
  is implicit in Gross--Zagier's approach
  to an analogous issue in their arguments).
  For completeness, we sketch a direct proof,
  albeit one that suffers the defect of relying
  on our assumption $k > 2$.
  For notational simplicity, we restrict to the case $N = D = e = q = 1$,
  so that the desired bound reads
  $\sum_{n \sim X} b_n \frac{\sigma_{a}(m -
    n)}{n^{a}} \ll X^{1-\alpha}$ for $X > m$, $a = k-2-2 \eps$, and $\eps$
  small and positive,
  where we write
  $\sigma_a(n) = \sigma[|.|^a](n)$ to reduce clutter;
  the general case follows by applying the argument below
  to bound separately the sum over $n$ in each
  arithmetic
  progression modulo $N D$.
  Our assumption $k > 2$ implies that $a$ is bounded from below
  by a fixed positive
  number provided that $\eps$ is taken sufficiently small.
  By opening the divisor sum and applying the hyperbola
  method, 
  we obtain
  \begin{equation}\label{eq:25}
    \begin{split}
      \sum_{n \sim X} b_n \frac{\sigma_a(m-n)}{n^a}
      &=
      \sum_{d_1 < 2 X^{1/2}} d_1^{a}
      \sum_{\substack{
          n \sim X \\
          n \equiv m (d_1)
        }
      }
      \frac{b_n}{n^a} \\
      &+
      \sum_{d_2 < 2 X^{1/2}} d_2^{-a}
      \sum_{\substack{
          n \sim X \\
          n \equiv m (d_2)
        }
      }
      \frac{(n-m)^a}{n^a} b_n
      - 
      \mathop{
        \sum_{d_1 < 2 X^{1/2}} 
        \sum_{d_2 < 2 X^{1/2}}
      }_{d_1 d_2 + m \sim X}
      d_1^{a}
      \frac{b_{d_1 d_2 + m}}{(d_1 d_2 + m)^a}.
    \end{split}
  \end{equation}
  The uniform bound
  $\sum_{n \sim t} b_n e(\alpha n) \ll_g t^{1/2} \log t$
  for all $t \geq 2$ and $\alpha \in \mathbb{R}/\mathbb{Z}$
  (\cite[Thm 5.3]{Iw97})
  implies
  that
  $\sum_{n \sim t, n \equiv m(d)} b_n \ll_g t^{1/2} \log t$
  uniformly for $d \in \mathbb{N}$.
  Inserting this estimate into \eqref{eq:25}
  and summing by parts, we deduce that
  $\sum_{n \sim X} b_n \frac{\sigma_a(m-n)}{n^a}
  \ll X^{o(1) + \max(1-a/2, 1/2)}$,
  as desired.
\end{proof}

Suppose now that $\chi = 1$ and $\eps$ is primitive.
Goldfeld-Zhang simplify
their analogue of
Theorem \ref{thm:general-kernel}
for $L_s(\lambda_m^*)$
under progressively
restrictive assumptions: first that $\eps^2 = 1$, and then that
$(N,D) = 1$ and $g$ is an imaginary quadratic theta series.
We shall make analogous simplifications, but cannot directly
use their results: some of the calculations in their arguments are not sufficiently
detailed for our purposes, and the expression they ultimately
derive for $L_s(\lambda_m^*)$ when $g$ is an imaginary quadratic
theta series is inconsistent with the functional equation
satisfied by $L(f \otimes g, s)$ for $f \in S_k(N) = S_k(N,1)$.
For
completeness, we prove the following variant of
\cite[Proposition 9.1]{GoZh99}.  Recall that $\lambda_m^*$ is
the kernel of the linear functional $\lambda_m$ on the inner
product space $S_k(N)$.  Let us now write
$\lambda_{m,N}^*$ to indicate its dependence on the level $N$.
We say that a form $f \in S_k(N)$ is
\emph{of strictly lower level} if there exists a proper
divisor $\otherN$ of $N$ such that
$f$ is in the image of the tautological inclusion
$S_k(\otherN) \hookrightarrow S_k(N)$
(compare with the Remark at the end of \cite[\S4.1]{GZ86}
and the discussion preceeding it).
\begin{theorem}\label{thm:real-simplification}
  With assumptions as in Theorem \ref{thm:general-kernel},
  suppose also that $\chi = 1$, $\eps$ is primitive, and
  $(N,D) = 1$.  Then there exists a linear combination $v \in
  S_k(N)$ of forms of strictly lower level such that
  \begin{eqnarray} \nonumber 
    L^N(\eps,2s)
    L_s \left( \lambda_{m,N}^* + v \right)
    &=& \frac{b_m}{m^{s}} L(\eps,2s)
    + 2 \pi i^{l-k} (\eps |.|^{1-2s})(N) 
    \sum_{\delta|D} \left( \frac{ \delta }{ 4 \pi ^2 } \right) ^{ 1/2-s}
    \frac{
      \tau(\eps_{\delta '})
    }{
      ( \delta ' ) ^{ 2 s }
    }
    \\ \label{eq:main2}
    &\cdot& 
    \mathop{\sum _{n\geq
        1}}_{N|(m \delta - n)} \frac{ b _n
      ^\delta }{ n ^{ 1 - s } } I _s \left( \frac{ m \delta }{ n } \right)
    ( \eps _\delta | . | ^{ 1 - 2 s } ) [ ( m \delta - n ) _1 ]
    \overline{\eps _{ \delta ' }} [ ( m \delta - n ) _2 ]
    \\ \nonumber &\cdot& 
    \sigma[ \eps
    |.|^{1-2s} ] \left( \frac{ ( m \delta - n ) _2 }{ N } \right).
  \end{eqnarray}
\end{theorem}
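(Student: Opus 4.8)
The plan is to read off Theorem~\ref{thm:real-simplification} from Theorem~\ref{thm:general-kernel}, by specializing the notation of \S\ref{sec:main-result} to the hypotheses at hand and then isolating the leading term of a M\"obius sum. Since $\chi$ is the principal character modulo $N$ one has $L(\chi\eps,2s)=L^N(\eps,2s)$, so Theorem~\ref{thm:general-kernel} together with Lemma~\ref{lem:obvious2} gives $L^N(\eps,2s)L_s(\lambda_{m,N}^{*})=c_m(s)$ with $c_m(s)$ the series of Lemma~\ref{lem:convergence-of-basic-kernel-sum}. Under our hypotheses ($\chi=1$, $\eps$ primitive, $(N,D)=1$, $D$ squarefree) the $\delta$-dependent data simplifies: for each $\delta\mid D$ the primitive character inducing $\chi\eps_{\delta'}=\eps_{\delta'}$ is $\eps_{\delta'}$ itself, so $q=\delta'$; moreover $M=[N,\delta']_2=N$, and the condition $N_1\mid(m\delta-n)_1q$ is vacuous because $N_1=1$. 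A direct substitution gives $2\pi i^{-k}T_s^{\delta}=2\pi i^{l-k}(\eps|.|^{1-2s})(N)(\delta/4\pi^{2})^{1/2-s}\tau(\eps_{\delta'})(\delta')^{-2s}$ and, writing $x=m\delta-n$, \[ S_s^{\delta}(x)=(\eps_\delta|.|^{1-2s})(x_1)\,\overline{\eps_{\delta'}}(x_2)\sum_{e\mid(N,x_2)}\mu(N/e)\,\frac{e}{N}\,\sigma[\eps|.|^{1-2s}](x_2/e). \]

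I would then split off the summand $e=N$ of this M\"obius sum; it occurs exactly when $N\mid x_2$, equivalently $N\mid m\delta-n$, and equals $(\eps_\delta|.|^{1-2s})(x_1)\overline{\eps_{\delta'}}(x_2)\sigma[\eps|.|^{1-2s}](x_2/N)$. Substituting this leading part back into $c_m(s)$ reproduces exactly the displayed sum over $\delta\mid D$ on the right-hand side of \eqref{eq:main2}. It therefore remains to identify the remainder --- the contribution of the divisors $e\mid(N,x_2)$ with $e\neq N$, together with the discrepancy $(L(\eps,2s)-L^N(\eps,2s))b_m/m^s$ between the two principal terms --- with $L^N(\eps,2s)L_s(v)$ for some linear combination $v\in S_k(N)$ of forms of strictly lower level.

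The cleanest route here is to compare $c_m(s)$ with the analogous series $c_m^{(N')}(s)$, built from Theorem~\ref{thm:general-kernel} at each proper divisor $N'\mid N$ with the principal character modulo $N'$ and the same $g$; its hypotheses still hold, as $k$ is even, $D$ is squarefree and $(N',D)=1$, so $c_m^{(N')}(s)=L^{N'}(\eps,2s)L_s(\lambda_{m,N'}^{*})$. The key observations are that the quantities $x_1,x_2$ and the factor $\tfrac{b_n^{\delta}}{n^{1-s}}I_s(m\delta/n)(\eps_\delta|.|^{1-2s})(x_1)\overline{\eps_{\delta'}}(x_2)$ do not depend on the level, that $T_s^{\delta}$ at level $N$ equals $(\eps|.|^{1-2s})(N/N')$ times $T_s^{\delta}$ at level $N'$, and that there is an elementary divisor-sum identity expressing $\sum_{e\mid(N,y)}\mu(N/e)(e/N)f(y/e)$, for arbitrary $f$, as its leading term $f(y/N)[N\mid y]$ plus a level-independent linear combination, with coefficients $c_{N'}$ uniquely determined by a triangular system, of the corresponding sums at the proper divisors $N'\mid N$. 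Combining these with the companion identity (also forced by that system) that reconstructs $L(\eps,2s)$ from $L^N(\eps,2s)$ and the $L^{N'}(\eps,2s)$ yields $R_m(s)=\sum_{N'\mid N}\gamma_{N'}(s)\,c_m^{(N')}(s)$, where $R_m(s)$ is the right-hand side of \eqref{eq:main2}, $\gamma_N(s)=1$, and each $\gamma_{N'}(s)$ is an explicit rational expression in $\{p^{-2s}:p\mid N\}$, in particular independent of the orthogonal basis. Since $L_s(f)$ depends only on the Fourier coefficients of $f$, the value $L_s(\lambda_{m,N'}^{*})$ is unchanged on viewing $\lambda_{m,N'}^{*}$ in $S_k(N)$ via the tautological inclusion; hence $R_m(s)=L^N(\eps,2s)L_s(\lambda_{m,N}^{*}+v)$, with $v$ the image in $S_k(N)$ of $\sum_{N'\neq N}\gamma_{N'}(s)\tfrac{L^{N'}(\eps,2s)}{L^N(\eps,2s)}\lambda_{m,N'}^{*}$, manifestly a linear combination of forms of strictly lower level.

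Finally, two technical points are dispatched exactly as in the deduction of Theorem~\ref{thm:finite-formula} from Theorem~\ref{thm:general-kernel}: in the summand $\delta=D$, $n=mD$ one has $m\delta-n=0$, where $\sigma[\eps|.|^{1-2s}](0)$ is read as $L(\eps,2s-1)$ per the convention of \S\ref{sec:main-result}; and at the finitely many $s$ at which some $\Sigma_{\delta}$ --- at level $N$ or at a lower level $N'$ --- is non-empty, one passes to the limit using continuity of $s\mapsto L^N(\eps,2s)L_s(\lambda_{m,N}^{*})$ and the same shifted-convolution bound used there. The main obstacle is the bookkeeping of the third paragraph: peeling the leading term off the M\"obius sum and recognizing the residue as a genuine linear combination of lower-level kernels with the correct scalar coefficients, in particular checking that the passage from $L^N(\eps,2s)$ to $L(\eps,2s)$ in the principal term of \eqref{eq:main2} is exactly compensated by the $\gamma_{N'}(s)$.
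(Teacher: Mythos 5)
Your proposal is correct and takes essentially the same route as the paper: the paper's proof forms the combination $\sum_{N'\mid N}\frac{B(N)}{B(N')}\lambda_{m,N'}^*$ with $B(N)=(\eps|.|^{-2s})(N)/L^N(\eps,2s)$ and lets the double M\"obius sum $\sum_{N'\mid N}\sum_{e\mid N'}\mu(N'/e)F(\delta,e)$ collapse to the $e=N$ term, which is your triangular-system inversion read in the other direction (your $\gamma_{N'}(s)$ work out to $(\eps|.|^{-2s})(N/N')$). Your ``companion identity'' for the principal term is exactly the paper's separate verification that $\sum_{N'\mid N}B(N)/B(N')=L_N(\eps,2s)$, which you correctly flag as the remaining check.
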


\begin{remark}
  \label{rmk:why-is-this-theorem-relevant}
  The RHS of Theorem \ref{thm:real-simplification}
  simplifies substantially in the ``stable range''
  $N > m D$ because of the strong divisibility condition
  in the sum appearing on the second line.
  Under certain assumptions (cf. Remark \ref{rmk:whatever}),
  this leads to analogous simplification
  in the formulas for certain first moments
  in the spirit of item (II) of the introduction.
  See Corollary \ref{cor:realDihedral} for a concrete
  example.
\end{remark}

\begin{proof}
  If $\otherN$ is a divisor of $N$, then we shall regard
  $\lambda_{m,\otherN}^* \in S_k(\otherN)$ as an oldform of lower level in $S_k(N)$.
  Since $(N,D) = 1$ and $\chi = 1$, we have $N_2 = N$ and $N_1 =
  1$ for all divisors $\delta$ of $D$, so we may rewrite
  the assertion of Theorem \ref{thm:general-kernel} as
  \begin{equation} \label{eq:Labc1}
    L_s(\lambda_{m,N}^*) = \frac{ b_m}{m^s} + \sum_{\delta | D}
    B_\delta(N) \sum_{e|N} \mu \left( \frac{ N }{ e } \right) F(\delta,e)
  \end{equation} 
  where (using that $\eps$ is primitive)
  \begin{align} \label{eq:ABC} \nonumber
    A(\delta) &= 2 \pi i^{l-k} \left( \frac{ \delta }{ 4 \pi ^2 }
    \right)^{1/2-s}
    \frac{ \tau( \eps_{\delta '}) }{ (\delta ')^{2s}}, \\ \nonumber
    B_\delta(N) &= B(N) = \frac{ (\eps |.|^{-2s})(N)}{L^N(\eps,2s)}, \\
    C(\delta,n) &= \frac{ b_n^\delta }{n^{1-s}} I_s \left( \frac{ m
        \delta }{ n } \right) ( \eps _\delta |.| ^{ 1 - 2 s } ) [ ( m
    \delta - n ) _1 ]
    \overline{\eps _{ \delta ' }} [ ( m \delta - n ) _2 ] ,
    \\ \nonumber
    E(\delta,n,e) &= e \, \sigma[{\eps |.|^{1-2s}}] \left( \frac{ ( m \delta - n ) _2 }{ e
      } \right) \quad ( := 0 \text{ unless } e | (m \delta - n)_2),
    \\ \nonumber
    F(\delta,e) &= \sum_n A(\delta) C(\delta,n) E(\delta,n,e).
  \end{align}
  We have suppressed the dependence of these expressions on the
  variables $s$ and $m$, which we now regard as fixed;
  what's key is that
  $B_\delta(N) = B(N)$ does not depend upon $\delta$.
  Rearranging the sums in (\ref{eq:Labc1}), we find
  \[
  L_s(\lambda_{m,N}^*) = \frac{ b_m}{m^s} + B(N) \sum_{e|N} \mu \left(
    \frac{ N}{e} \right) \sum_{\delta | D} F(\delta,e).
  \]
  Applying inclusion-exclusion, we obtain
  \begin{eqnarray} \label{eq:Lold}
    L_s \left( \sum_{\otherN|N} \frac{ B(N)}{B(\otherN)} \lambda_{m,N}^*
    \right)
    &=&
    \frac{ b_m}{m^s} \sum_{\otherN|N} \frac{ B(N)}{B(\otherN)}
    + B(N) \sum_{\delta|D} \sum_{\otherN|N} \sum_{e|\otherN} \mu \left( \frac{ \otherN}{e} \right)
    F(\delta,e) \\
    \nonumber &=&
    \frac{ b_m}{m^s} \sum_{\otherN|N} \frac{ B(N)}{B(\otherN)}
    + B(N) \sum_{\delta|D} F(\delta,N).
  \end{eqnarray}

  Let us temporarily set $\psi = \eps |.|^{-2s}$.  Then
  \[
  \sum_{\otherN|N} \frac{ B(N)}{B(\otherN)} = \psi(N) L_N(\eps, 2 s) \sum_{\otherN|N} \frac{ \prod_{p|\otherN} (1
    - \psi(p))}{\psi(\otherN)}.
  \]
  Applying inclusion-exclusion once again, the inner sum is
  \begin{eqnarray*}
    \sum_{\otherN|N} \frac{ \prod_{p|\otherN} (1 - \psi(p))}{\psi(\otherN)}
    &=& \sum_{\otherN|N} \psi^{-1}(\otherN) \sum_{e|\otherN} \mu(e) \psi(e)
    = \sum_{e|N} \psi(e) \mu(e) \mathop{\sum_{\otherN|N}}_{\otherN \equiv 0(e)}
    \psi^{-1}(\otherN) \\
    &=& \sum_{e|N} \mu(e) \sum_{\otherN | \frac{N}{e}} \psi^{-1}(\otherN)
    = \sum_{\otherN|N} \psi^{-1}(\otherN) \sum_{e|\frac{N}{\otherN}} \mu(e) \\
    &=& \psi^{-1}(N),
  \end{eqnarray*}
  so that
  \begin{equation} \label{eq:sumMN}
    \sum_{\otherN|N} \frac{B(N)}{B(\otherN)} = L_N(\eps,2 s).
  \end{equation}

  Substituting (\ref{eq:sumMN}) and (\ref{eq:ABC}) in (\ref{eq:Lold})
  gives (\ref{eq:main2}).
\end{proof}

\subsection{Proof of Theorem \ref{thm:general-kernel}}\label{sec:proof-theor-refthm:g}
In this section we generalize and refine the results of
Goldfeld-Zhang \cite{GoZh99},
as explained in \S\ref{sec:some-results-goldf-2},
following their method.  Let $g =
\sum b_n n^{(l-1)/2} q^n \in S_l(D,\eps)$.  The functional
equation for the additive twists of $L(g,s)$ is proved (up to a
sign; see below) in \cite[Proposition 4.2]{GoZh99}, and asserts
the following.  Let $c$ be a positive integer and $a \in
(\mathbb{Z}/c \mathbb{Z})^*$.
Associate to $c$ the
decomposition $D = \delta \delta'$ with $\delta ' = (c,D) > 0$,
and to $a$ the complex number
\begin{equation} \label{eq:eta}
  \eta_{a/c} = \frac{ i^{l}
  }{\eps_{\delta}(\delta ') \eps_{\delta '}(\delta)} 
    \eps_\delta(c) \eps_{\delta '}(a),
\end{equation}
where $\bar{a} = a^{-1} \in (\mathbb{Z}/ c \mathbb{Z})^*$ as usual.
Recall the notation $G_k(w) = \Gamma(\frac{ k - 1 }{ 2 } + w ) / \Gamma (
\frac{ k - 1 }{ 2 } + 1 - w )$
from the statement of Theorem \ref{thm:general-kernel}.
Then
\begin{equation} \label{eq:func-eqn}
  \sum_n \frac{b_n e_c(\bar a n)}{n^{s}}
  = \eta_{a/c} \left( \frac{\delta c ^2}{4 \pi ^2} \right)^{\frac{1}{2}-s}
  \frac{1}{G_l(s)}
  \sum_n \frac{ b^\delta_n e_c(-a \bar \delta n)}{n^{1-s}}.
\end{equation}
More precisely, the series involved in
\eqref{eq:func-eqn}
converge absolutely for $s$ in suitable half-planes
and extend to entire functions of $s$ satisfying \eqref{eq:func-eqn}.
In \cite[Proposition 4.2]{GoZh99} the 
formula (\ref{eq:func-eqn}) is obtained
but with the factor $i^l$ in $\eta_{a/c}$
replaced by $i^{-l}$.  This results from a sign error in
\cite[page 735, displayed equation 6]{GoZh99}, where one should
write $(-c z)^l$ instead of $(cz)^l$.  This propagates to a sign
error in the statement of \cite[Proposition, 4.2]{GoZh99} and
the theorems that follow.  As a check, take $c =
1$, $a = 0$, $\delta = D, \delta ' = 1$.  In that case it is a
classical theorem of Hecke (see for instance \cite[Theorem
14.7]{MR2061214}) that the functional equation relating $g$
(with coefficients $b_n$) to its Fricke involute $g|W_D$ (with
coefficients $b_n^D$) is
\[
\sum_n \frac{b_n}{n^s}
= i^l D^{1/2-s}
\frac{\Gamma(\frac{l-1}{2}+1-s)}{\Gamma(\frac{l-1}{2}+s)}
\sum_n \frac{ b^D_n}{n^{1-s}},
\]
which agrees with (\ref{eq:func-eqn})
as we have written it.  The statement and proof of
\cite[Proposition, 4.2]{GoZh99} are otherwise correct.

The term $\Delta_{m n}$ in
the Petersson formula $\mathbb{E}[\overline{\lambda_m}
\lambda_n] = \delta_{m n} + \Delta_{m n}$ is given by
(see \cite[Proposition 14.5]{MR2061214})
\begin{equation} \label{eq:delta}
  \Delta_{mn} = 2 \pi i^{-k} \mathop{\sum_{c \geq 1}}_{N|c} \frac{S_\chi
    (m,n,c)}{c} J_{k-1} 
  \left(\frac{4 \pi \sqrt{m n}}{c} \right),
\end{equation}
where for any $\eps \in (0,1/2)$
\[S_\chi(m,n,c) = \sum_{a \in (\mathbb{Z} / c \mathbb{Z})^*}
\chi(a) e_c(m a + n \bar{a}),
\quad
J_{k-1}(x) = \int_{(\eps - \frac{k-1}{2})}
G_k(w) \left( \frac{ x }{ 2 }
\right) ^{ - 2w } \,d w
\]
are the Kloosterman sum and Bessel function.
For $\Re(s) > 1$ we have
\begin{equation}\label{eq:plug-in-here}
  \mathbb{E}[\overline{\lambda_m}L_s]
  = \sum_{n \geq 1}
  \frac{b_n \mathbb{E}[\overline{\lambda_m} \lambda_n]}{n^s}
  = \frac{b_m}{m^s} + \sum_{n \geq 1} \frac{b_n \Delta_{m
      n}}{n^s},
\end{equation}
so it remains only to compute $\sum b_n \Delta_{m n} n^{-s}$.
By definition,
\[
\sum_n \frac{b_n \Delta_{mn} }{n^s}
=
2 \pi i^{-k}
\sum_n
\mathop{\sum_{c \geq 1}}_{N|c} \frac{S_\chi (m,n,c)}{c} J_{k-1} 
\left(\frac{4 \pi \sqrt{m n}}{c} \right) \frac{b_n}{n^s}.
\]
The estimate $J_{k-1}(x) \ll_k \min(x^{k-1},x^{-1/2})$
and the Weil bound
$|S_\chi(m,n,c)| \leq (m,n,c) \tau(c) c^{1/2}$
imply
that the double sum over $n$ and $c$ on the RHS converges absolutely
for $\Re(s) > 5/4$.
Interchanging summation and opening the Kloosterman sum
gives
\[
\sum_n \frac{b_n \Delta_{mn} }{n^s}
=
2 \pi i^{-k} \mathop{\sum_{c \geq 1}}_{N|c}
\frac{
  1
}{
  c
}
\sum_n
\sum_{a \in (\mathbb{Z}/c \mathbb{Z})^*}
\chi(a)
  e_c(a m)
e_c(\bar{a} n)
J_{k-1} 
\left(\frac{4 \pi \sqrt{m n}}{c} \right) \frac{b_n}{n^s}.
\]
Here the inner double sum over $n$ and $a$ converges absolutely
for $\Re(s) > 5/4$ (or even $\Re(s) > 3/4$)
thanks to the bound $J_{k-1}(x) \ll_k
x^{-1/2}$,
so we may interchange the $n$ and $a$ sums to get
\begin{equation}\label{eq:17}
\sum_n \frac{b_n \Delta_{mn} }{n^s}
=
2 \pi i^{-k} \mathop{\sum_{c \geq 1}}_{N|c}
\sum_{a \in (\mathbb{Z}/ c\mathbb{Z} )^*}
\frac{
  e_c(a m)
}{
  c
}
A(a/c,s),
\end{equation}
where
\[
A(a/c,s) :=
\sum_n
\chi(a)
e_c(\bar{a} n)
J_{k-1} 
\left(\frac{4 \pi \sqrt{m n}}{c} \right) \frac{b_n}{n^s}.
\]
\begin{lemma}\label{lem:convergence-bla-bla-bla-bla}
  For each $k \geq 2$, the function $A(a/c,s)$, defined initially for $\Re(s) >
  3/4$,
  extends to a meromorphic function on the
  complex plane
  that is holomorphic in the half-plane $\Re(s) > 1/2$.
  Moreover, for $k > 2$ and
  $s$ in the region $\Omega := \{s \in \mathbb{C} : 1/2 <
  \Re(s) < (k-1)/2\}$,
  we have
  \begin{equation}\label{eq:16}
  A(a/c,s)
  =
  \eta_{a/c}
  \left( \frac{\delta c^2}{4 \pi ^2 } \right)^{1/2-s}
  \sum_n
  \frac{
    b_n^\delta e_c(-a \bar{\delta }n)
  }{
    n^{1-s}
  }
  I_s
  \left(
    \frac{m \delta }{n}
  \right),
\end{equation}
where $I_s(y)$ is as in the statement of
Theorem \ref{thm:general-kernel}
and the decomposition $D = \delta \delta '$ is associated
to $c$ as above.
\end{lemma}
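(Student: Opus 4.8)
The plan is to prove both assertions by Voronoi summation applied to $A(a/c,s)$: open the Bessel function $J_{k-1}$ by its Mellin--Barnes integral, feed in the functional equation \eqref{eq:func-eqn} for the additively twisted series $D_{a/c}(u):=\sum_n b_n e_c(\bar a n) n^{-u}$ (which, since $g$ is cuspidal, is entire and satisfies \eqref{eq:func-eqn} with $s$ replaced by $u$), and then shift contours. First I would record that \eqref{eq:delta} holds with the contour $(\eps-\tfrac{k-1}{2})$ replaced by any vertical line $(\sigma)$ with $-\tfrac{k-1}{2}<\sigma<\tfrac34$; this is just Mellin inversion of the classical identity $\int_0^\infty J_{k-1}(x)x^{u-1}\,dx=2^{u-1}\Gamma(\tfrac{k-1+u}{2})/\Gamma(\tfrac{k+1-u}{2})$. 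Taking $\Re(s)>1$ and $\sigma$ in the nonempty range $\max(1-\Re(s),-\tfrac{k-1}{2})<\sigma<0$, I insert this representation into the absolutely convergent series $A(a/c,s)=\chi(a)\sum_n b_n e_c(\bar a n)n^{-s}J_{k-1}(4\pi\sqrt{mn}/c)$ and interchange the $n$-sum with the $w$-integral; the interchange is licensed by absolute convergence, since $\sigma<0$ makes $\int|G_k(w)|\,d\Im(w)$ converge (via $|G_k(w)|\asymp|\Im(w)|^{2\Re(w)-1}$) while $\Re(s)+\sigma>1$ makes $\sum_n|b_n|n^{-\Re(s)-\sigma}$ converge. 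Using $(2\pi\sqrt{mn}/c)^{-2w}=(2\pi\sqrt m/c)^{-2w}n^{-w}$, this yields
\[
A(a/c,s)=\chi(a)\int_{(\sigma)}G_k(w)\Bigl(\tfrac{2\pi\sqrt m}{c}\Bigr)^{-2w}D_{a/c}(s+w)\,\frac{dw}{2\pi i}.
\]

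Next I would shift the $w$-contour to the left. Since $G_k$ is holomorphic for $\Re(w)>-\tfrac{k-1}{2}$ and $D_{a/c}$ is entire, no pole is met before $w=-\tfrac{k-1}{2}$; each pole of $G_k$ crossed, at $w_0=-\tfrac{k-1}{2}-j$ with $j\ge0$, contributes a residue term $(\operatorname{Res}_{w=w_0}G_k)\,(2\pi\sqrt m/c)^{-2w_0}D_{a/c}(s+w_0)$, which is entire in $s$. Moving to a line $\Re(w)=\eps-\tfrac{k-1}{2}-j$ with $j$ large enough that $\Re(s+w)<0$, and substituting \eqref{eq:func-eqn}, one writes $D_{a/c}(s+w)=\eta_{a/c}(\tfrac{\delta c^2}{4\pi^2})^{1/2-s-w}G_l(s+w)^{-1}\sum_n b_n^\delta e_c(-a\bar\delta n)n^{s+w-1}$ with the dual series absolutely convergent; by Stirling $|G_k(w)|\asymp|\Im(w)|^{2\Re(w)-1}$ and $|G_l(s+w)^{-1}|\asymp|\Im(w)|^{1-2\Re(s+w)}$, so the integrand decays like $|\Im(w)|^{-2\Re(s)}$ and the shifted integral converges to a holomorphic function of $s$ for $\Re(s)>1/2$. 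Together with the entire residue terms and with the original series on $\Re(s)>3/4$, this gives the holomorphy of $A(a/c,s)$ on $\Re(s)>1/2$; for the meromorphic continuation to all of $\mathbb{C}$ one can, when $k>2$, simply invoke the formula \eqref{eq:16} proved below, which manifestly continues $A$ meromorphically to $\{\Re(s)<(k-1)/2\}$ --- a region overlapping the domain of the original series --- while for $k=2$ the analogous leftward shift past the sole pole of $G_2$, iterated as needed, does the job. Note that when $k>2$ and $\tfrac12<\Re(s)<\tfrac{k-1}{2}$, already a single shift to the line $\Re(w)=\eps-\tfrac{k-1}{2}$ has $\Re(s+w)<0$ (for $\eps$ small, depending on $s$) and picks up no residue.

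Finally, to obtain \eqref{eq:16} for $k>2$ and $s\in\Omega$, I work with the contour $\Re(w)=\eps-\tfrac{k-1}{2}$ from the previous step; there $\Re(1-s-w)>1$, so the dual series converges absolutely and uniformly along the contour and may be moved outside the $w$-integral. Pulling out the $n$-independent factor $\eta_{a/c}(\tfrac{\delta c^2}{4\pi^2})^{1/2-s}$ and writing $n^{s+w-1}=n^{s-1}\,n^{w}$, the integral attached to the $n$-th term becomes $\int_{(\eps-\frac{k-1}{2})}\tfrac{G_k(w)}{G_l(s+w)}\,y^{-w}\,\tfrac{dw}{2\pi i}$ with $y=(2\pi\sqrt m/c)^2\cdot\tfrac{\delta c^2}{4\pi^2}\cdot n^{-1}=m\delta/n$, which is exactly $I_s(m\delta/n)$ by \eqref{eq:Isoln}. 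Assembling the pieces gives $A(a/c,s)=\chi(a)\,\eta_{a/c}(\tfrac{\delta c^2}{4\pi^2})^{1/2-s}\sum_n b_n^\delta e_c(-a\bar\delta n)\,n^{s-1}\,I_s(m\delta/n)$, which is \eqref{eq:16} (the prefactor $\chi(a)$ being carried along throughout).

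The technical heart --- and the main obstacle --- is the second step: justifying the leftward contour shift and, above all, showing that the shifted integral converges for $\Re(s)>1/2$. This rests on the cuspidality of $g$ (so that $D_{a/c}$ is entire and no stray poles are crossed) and on controlling the vertical growth of $D_{a/c}$: this is immediate on the half-plane where the dual series converges, via \eqref{eq:func-eqn} and Stirling, but in the intermediate strip $0\le\Re(u)\le1$ it requires a Phragm\'en--Lindel\"of convexity bound for $D_{a/c}$, and this is also where the borderline case $k=2$ (which forces a shift past the first pole of $G_2$, though that case is not needed for \eqref{eq:16}) is the most delicate. The Mellin--Barnes insertion, the two sum/integral interchanges, and the constant-chasing that identifies the inner $w$-integral with $I_s$ are then routine.
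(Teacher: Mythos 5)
Your proposal is correct and follows essentially the same route as the paper's proof: insert the Mellin--Barnes representation of $J_{k-1}$, interchange sum and integral, use a Phragm\'en--Lindel\"of convexity bound for the additively twisted Dirichlet series to push the integral representation to $\Re(s)>1/2$, then apply the functional equation \eqref{eq:func-eqn} on the line $\Re(w)=\eps-\tfrac{k-1}{2}$ (where the dual series converges absolutely precisely because $k>2$) and identify the inner $w$-integral with $I_s(m\delta/n)$. The only differences are bookkeeping ones --- you start the contour near $\Re(w)=0$ and shift left, while the paper works on the line $\eps-\tfrac{k-1}{2}$ from the outset --- and your explicit retention of the factor $\chi(a)$ is consistent with the definition of $A(a/c,s)$ and with how \eqref{eq:16} is used afterwards.
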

\begin{proof}
  Formally, this is a direct application of the functional equation
  \eqref{eq:func-eqn}; the only issue
  is in justifying convergence in the
  intermediate steps.
  The asymptotic $I_s(y) \sim y^{(k-1)/2}$ as $y \rightarrow 0$
  shows that the series on the RHS
  of \eqref{eq:16}
  converges normally on $\Omega$.
  By known properties of
  $I_s(y)$  as summarized at the beginning of \S\ref{sec:some-results-goldf-2},
  we are done if we can show
  that
  $A(a/c,s)$
  extends to the half-plane $\Re(s) > 1/2$
  and that when $k > 2$, $A(a/c,s)$ satisfies \eqref{eq:16} on any nonempty
  open subset of $\Omega$.
  We outline the proof of this last assertion:
  \begin{enumerate}
  \item Inserting the Mellin expansion $J_{k-1}(2x) = \int _{(\eps -
      \frac{k - 1}{2})}
    G_k(w) x^{-w}
    \, \frac{d w}{2 \pi i}$
    gives
    \[
    A(a/c,s)
    = \sum _{n} \int _{\left( \eps - \frac{k - 1}{2} \right)}
    \frac{b _n e _c (\bar{a}n)}{n ^{s + w}}
    G_k(w) \left( \frac{4 \pi ^2 m}{ c ^2 } \right) ^{- w }
    \, \frac{d w}{2 \pi i}.
    \]
    By Stirling in the form $G_k(w) \ll (1 + |\Im( w)|)^{2 \Re(w)
      - 1}$,
    this double sum/integral converges absolutely for $\Re(s) >
    (k+1)/2$.
    Interchanging summation with integration gives
    \begin{equation}\label{eq:15}
      A(a/c,s)
      = \int _{\left( \eps - \frac{k - 1}{2} \right)}
      G_k(w) \left( \frac{4 \pi ^2 m}{ c ^2 } \right) ^{- w }
      D(s + w)
      \, \frac{d w}{2 \pi i}
      \quad  \text{with }
      D(w) :=
      \sum _{n} \frac{b _n e _c (\bar{a }n)}{ n ^w }
    \end{equation}
    for $\eps > 0$ and $\Re(s) > (k+1)/2$.
  \item The convexity bound for $D(w)$,
    obtained by the functional equation, Stirling's formula and
    Phragmen-Lindel\"{o}f, asserts that
    the estimate
    $|D(w)| \ll_\eps (1 + |\Im(w)|)^{\eta(\Re(w)) + \eps}$
    holds uniformly in vertical strips,
    where
    \[
    \eta(\sigma) = 
    \begin{cases}
      1 - 2 \sigma & \sigma \leq 0 \\
      1 - \sigma  & 0 \leq \sigma \leq 1 \\
      0 & 1 \leq \sigma.
    \end{cases}
    \]
    With some case-by-case analysis, it follows
    that the integral \eqref{eq:15} converges normally for
    $\Re(s) \geq 1/2 + 3 \eps$.  Taking $\eps \rightarrow 0$, we
    obtain the claimed holomorphic continuation of $A(a/c,s)$ to
    the half-plane $\Re(s) > 1/2$.
  \item
    Suppose now that $k > 2$.
    We aim now to establish the claimed identity \eqref{eq:16}
    for $s$ in the (nonempty) subregion
    $\Omega_\eps = \{s \in \mathbb{C} : 1/2 + 3 \eps \leq 
    \Re(s) \leq  1/2 + 4 \eps \}$ of $\Omega$.
    To do so, we apply the functional equation
    \eqref{eq:func-eqn} in the form
    \[
    D(s + w)
    = \eta_{a/c} \left( \frac{\delta c ^2 }{4 \pi ^2 } \right)
    ^{1/2- s}
    \frac{\tilde{D}(1-s-w)}{G_l(s+w)},
    \quad
    \tilde{D}(1-s-w) :=
    \sum _n \frac{b _n ^\delta e _c (- a \bar{\delta }n)}{ n ^{1 - s-w}},
    \]
    giving (for $\Re(s) \geq 1/2 + 3 \eps$)
    \[
    A(a/c,s)
    =
    \left( \frac{\delta c^2 }{4 \pi ^2 } \right) ^{1/2 - s}
    \eta_{a/c}
    \int _{\left( \eps - \frac{k - 1}{2} \right)}
    \frac{
      G_k(w)
    }{
      G_l(s+w)
    }
    (m \delta ) ^{- w }
    \tilde{D}(1-s-w)
    \, \frac{d w}{2 \pi i}.
    \]
    Since $s \in \Omega_\eps$ and $k > 2$,
    we have $\Re(s) \leq (k-1)/2 - \eps$,
    i.e., $\Re(1 - s - w) \geq 1+ \eps$.
    Therefore the series definition of $\tilde{D}$ applies.
    The resulting double sum/integral converges
    absolutely,
    so we deduce the claimed identity \eqref{eq:15}
    by interchanging.
  \end{enumerate}
\end{proof}

\begin{remark}
  Using the meromorphic continuation of $s \mapsto I_s(y)$,
  one can make sense of the RHS of \eqref{eq:15}
  in the larger range $\Re(s) < (k-1)/2$.
\end{remark}
In order to insert the result \eqref{eq:16}
of Lemma \ref{lem:convergence-bla-bla-bla-bla}
(valid for $1/2 < \Re(s) < (k-1)/2$)
into the formula \eqref{eq:17}
(valid for $\Re(s) > 5/4$),
the domains of validity of these two assertions should have
nonempty
overlap.  The latter holds if and only if
$5/4 < (k-1)/2$, i.e., if and only if
$k \geq 4$.
Thus, for $k \geq 4$ and $1/2 < \Re(s) < (k-1)/2$,
\[
\sum_n \frac{b_n \Delta_{mn} }{n^s}
=
2 \pi i^{-k}
\sum_{N | c}
\frac{\left( \frac{\delta }{4 \pi ^2 } \right)^{1/2-s}}{c^{2 s}}
\sum_{a \in (\mathbb{Z}/ c\mathbb{Z} )^*}
\chi(a)
\eta_{a/c}
e_c(a (m  - \bar{\delta }n))
\sum_n
\frac{
  b_n
}{
  n^{1-s}
}
I_s
\left(
  \frac{m \delta }{n}
\right).
\]
The calculations below
(in Lemma \ref{lem:ramsum},
of the Gauss--Ramanujan sum over $a$)
show that the
the double sum over $c$ and $n$ converges absolutely
for $s$ in the stated range, so that a final interchange
of summation and a substitution
$a \mapsto \delta a$ gives
\begin{equation}\label{eq:sumDelta}
\sum_n \frac{b_n \Delta_{mn} }{n^s}
=
2 \pi i^{-k} \sum_{\delta | D} \left( \frac{ \delta }{ 4 \pi ^2 }
\right)^{\frac{1}{2} - s} \sum_n \frac{b_n^\delta
}{n^{1-s}}
I_s \left( \frac{ m \delta }{ n } \right)
R_s^\delta(m \delta - n),
\end{equation}
where
\begin{align*}
  R_s^\delta(x) &= \mathop{\mathop{\sum_{c \geq
        1}}_{N|c}}_{(c,D)=\delta '} c^{-2s} \sum_{a \in
    (\mathbb{Z} / c \mathbb{Z})^*} \chi(\delta a)
  \eta_{\delta a/c} e_c(x a).
\end{align*}
Substituting the definition (\ref{eq:eta}) of
$\eta_{a/c}$, we find
\begin{equation}\label{eq:Rexpr}
  R_s^\delta(x) = \frac{ i^{l} \chi(\delta) }{\eps_\delta(\delta ') }
  \mathop{\sum_{N|c \geq 1}}_{(c,D)= \delta ' }
  c^{-2s} \eps_\delta(c) \sum_{a \in (\mathbb{Z} / c
    \mathbb{Z})^*} 
  \chi  \eps_{\delta
      '}(a) e_c(x a).
\end{equation}
Recall from the beginning of \S\ref{sec:main-result} that we let $\xi$ denote the
primitive character of conductor $q$ that induces $\chi
\eps_{\delta '} $,
that by convention $\xi(0) = 1$ if $q = 1$
and $\xi(0) = 0$ otherwise,
that for $A \neq 0$ we write $A = A_1 A_2$
with $0 < A_1 | q^\infty$ and $(A_2,q) = 1$, that for $A = 0$ we
take $A_1 = 1$ and $A_2 = 0$,
and that we set $M = [N_2, \delta'_2] = [N,\delta ']_2$.

Since $q$ divides $[N,\delta ']$ which in turn divides
$c$, we have
$\chi \eps_{\delta '}(a)
= \xi (a)$ whenever $(a,c) = 1$,
so that we may write
$\xi$ instead of $\chi \eps_{\delta '}$ in
(\ref{eq:Rexpr}).
\begin{lemma} \label{lem:ramsum}
  If $x_1 q = c_1$, then we have
  \begin{equation} \label{eq:ramsum} \sum_{a \in (\mathbb{Z} / c
      \mathbb{Z})^*} \xi(a) e_c(x a) =
    x_1 \overline{\xi} (x_2)  \xi  (c_2) \tau(\xi ) \sum_{d|(c_2,x_2)} d \mu
    \left( \frac{ c _2 }{ d } \right),
  \end{equation}
  otherwise the sum on the left hand side of (\ref{eq:ramsum}) vanishes.
\end{lemma}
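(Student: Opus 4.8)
The plan is to evaluate the twisted sum by separating, via the Chinese remainder theorem, the $q$-part of the modulus $c$ from its prime-to-$q$ part. In the situation at hand $q \mid [N,\delta'] \mid c$, so writing $c = c_1 c_2$ with $0 < c_1 \mid q^\infty$ and $(c_2,q)=1$ forces $q \mid c_1$; since every prime factor of $c_1$ divides $q$, we may set $t := c_1/q$, again a divisor of $q^\infty$. First I would use the isomorphism $\mathbb{Z}/c\mathbb{Z} \cong \mathbb{Z}/c_1\mathbb{Z} \times \mathbb{Z}/c_2\mathbb{Z}$, the attendant factorization $e_c(xa) = e_{c_1}(x\overline{c_2}a)\,e_{c_2}(x\overline{c_1}a)$ of the additive character (bars denoting inverses modulo $c_1$, resp.\ $c_2$), and the fact that $\xi$, of period $q \mid c_1$, depends only on the $c_1$-component of $a$, to factor the left-hand side of \eqref{eq:ramsum} as
\[
  \Bigl(\sum_{a \in (\mathbb{Z}/c_1\mathbb{Z})^*} \xi(a)\, e_{c_1}(x\overline{c_2}a)\Bigr)
  \Bigl(\sum_{a \in (\mathbb{Z}/c_2\mathbb{Z})^*} e_{c_2}(x\overline{c_1}a)\Bigr).
\]

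The second factor is a Ramanujan sum; it depends on its argument only through $(c_2, x\overline{c_1}) = (c_2,x) = (c_2,x_2)$, so it equals $\sum_{d \mid (c_2,x_2)} d\,\mu(c_2/d)$, which is the final factor of \eqref{eq:ramsum}. For the first factor I would substitute $a \mapsto c_2 a$ to extract $\xi(c_2)$, reducing matters to $H := \sum_{a \bmod c_1,\ (a,q)=1} \xi(a)\, e_{c_1}(xa)$. Writing each such $a$ uniquely as $u + qv$ with $u$ over the units modulo $q$ and $v$ over $\mathbb{Z}/t\mathbb{Z}$, and using $e_{c_1}(xqv) = e_t(xv)$, the sum over $v$ contributes a factor $t$ if $t \mid x$ and $0$ otherwise; since $t \mid q^\infty$ and $(x_2,q)=1$, the condition $t \mid x$ is equivalent to $t \mid x_1$.

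When $t \mid x_1$ I would write $x_1 = t x_1'$, so that $e_{c_1}(xu) = e_q(x_1' x_2 u)$ and $H = t \sum_{u \bmod q,\ (u,q)=1} \xi(u)\, e_q(x_1' x_2 u)$. This is where the primitivity of $\xi$ enters decisively: for primitive $\xi$ one has $\sum_u \xi(u)\, e_q(au) = \overline{\xi}(a)\,\tau(\xi)$ for \emph{every} $a$, with the convention $\overline{\xi}(a) = 0$ when $(a,q)>1$. Since $x_1' \mid q^\infty$, we have $(x_1' x_2,q)=1$ precisely when $x_1' = 1$, i.e.\ when $c_1 = x_1 q$; in that case $H = t\,\overline{\xi}(x_2)\,\tau(\xi) = x_1\,\overline{\xi}(x_2)\,\tau(\xi)$, and otherwise $H = 0$. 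Recombining $\xi(c_2)H$ with the Ramanujan factor (and using $t = x_1$ in the surviving case) yields \eqref{eq:ramsum}, while in every remaining situation — $t \nmid x_1$, or $t \mid x_1$ with $c_1 \neq x_1 q$ — the left-hand side vanishes.

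I expect the genuinely mathematical content to be confined to the primitivity identity above, which is exactly what forces the equality $c_1 = x_1 q$ in the non-vanishing case; the remaining work is bookkeeping, the chief nuisance being to keep the $q^\infty$-smooth and prime-to-$q$ decompositions of $c$, of $x = m\delta - n$, and of $N$ mutually consistent, and to check the degenerate cases separately: $q = 1$, where $\xi$ is trivial, $\tau(\xi) = 1$, and both sides reduce to the ordinary Ramanujan sum $\sum_{d \mid (c,x)} d\,\mu(c/d)$; and $x = 0$, where the conventions $x_1 = 1$, $x_2 = 0$, $\xi(0)=0$ (for $q>1$) must be matched against the direct value of $\sum_{(a,c)=1}\xi(a)$, which vanishes for $q > 1$ and equals $\phi(c)$ for $q = 1$.
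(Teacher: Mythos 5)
Your proof is correct and follows essentially the same route as the paper's: a CRT factorization of the sum into the $q^{\infty}$-part of $c$ against a prime-to-$q$ Ramanujan sum, with primitivity of $\xi$ forcing the condition $c_1 = x_1 q$. The only cosmetic difference is that you treat the $c_1$-block globally via the identity $\sum_{u}\xi(u)e_q(au)=\overline{\xi}(a)\tau(\xi)$ (valid for all $a$ by primitivity), whereas the paper decomposes further into prime powers $p^{\alpha}\| c_1$ and rederives the corresponding local statements; your separate handling of the degenerate cases $q=1$ and $x=0$ matches the paper's as well.
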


\begin{proof}
  Suppose first that $x \neq 0$.  Let us write $p^\alpha||n$ to
  denote that a prime $p$ divides $n$ to order exactly $\alpha$,
  in which case we also write $\alpha = v_p(n)$.
  The Chinese remainder theorem
  shows that $1 \equiv \sum_{p^\alpha || c} \overline{c p^{-a}} c
  p^{-a} \pmod{c}$, where $\overline{c p^{-a}}$ is any inverse
  mod $p^\alpha$.  Thus
  \[
  e_c(x a) = \prod_{p^\alpha || c} e_{p^\alpha}
  (\overline{cp^{-\alpha}} x a),
  \]
  and we may factor the left hand side of (\ref{eq:ramsum}) as
  \[
  \left( \prod_{p^\alpha || c_1} \sum_{a \in (\mathbb{Z} / p^\alpha \mathbb{Z})^*}
    \xi_p(a) e_{p^\alpha}(\overline{cp^{-\alpha}} x a)
  \right) \sum_{a \in (\mathbb{Z} / c_2 \mathbb{Z})^*} e_{c_2}(x a).
  \]
  Similarly,
  \[
  \tau(\xi) = \prod_{p^\beta  || q}
  \sum_{a \in (\mathbb{Z} / p^\beta \mathbb{Z})^*} \xi _p(a) e_{p^\beta}( \overline{ q  p^{-\beta}} a).
  \]
  The evaluation of the Ramanujan sum
  \[
  \sum_{a \in (\mathbb{Z} / c_2 \mathbb{Z})^*} e_{c_2}(x a) = \sum_{d|(c_2,x_2)} d \mu \left(
    \frac{ c_2 }{ d } \right)
  \]
  is well-known: by Mobius inversion, this amounts to the
  assertion that $\sum_{a \in \mathbb{Z} / n \mathbb{Z}}
  e_{n}(x a)$ is $n$ if $n | x$, $0$ otherwise, and that
  $(c_2,x) = (c_2,x_2)$.

  Fix a prime divisor $p$ of $c_1$ (equivalently of $q$, since $q$ and
  $c_1$ have the same support), and write $\alpha = v_p(c_1)$,
  $\beta = v_p(q)$.  Since $\xi_p$ is primitive of
  conductor $p^\beta \neq 1$ and the additive character $a \mapsto
  e_{p^\alpha}(\overline{ c p^{-\alpha}} x a)$ has period
  $p^{\alpha - v_p(x)}$, we see that $\sum_{a \in (\mathbb{Z} /
    p^\alpha \mathbb{Z} )^*} \xi_p(a)
  e_{p^\alpha}(\overline{ cp^{-\alpha}} x a)$ vanishes unless
  $v_p(x) = \alpha - \beta$, in which case
  \begin{eqnarray} \label{eq:sumpalpha}
    \sum_{a \in (\mathbb{Z} / p^\alpha \mathbb{Z})^*} \xi_p(a)
    e_{p^\alpha}(\overline{ cp^{-\alpha}} x a)
    &=& p^{\alpha-\beta} \sum_{a \in (\mathbb{Z} / p^{\beta}
      \mathbb{Z})^*} 
    \xi_p(a)
    e_{p^\beta} \left( \overline{ cp^{-\alpha}} \frac{x}{p^{\alpha-\beta}} a \right) \\ \nonumber
    &=& \xi_p \left( \frac{ c }{ p^{\beta} x } \right)
    p^{\alpha - \beta} \sum_{a \in (\mathbb{Z} / p^{\beta}
      \mathbb{Z})^*}
    \xi_p(\overline{
      q p^{-\beta}} a) e_{p^\beta}(\overline{ q p^{-\beta}} a) \\ \nonumber
    &=& \xi_p \left( \frac{ c }{ q x} \right) 
    p^{\alpha - \beta} \sum_{a \in (\mathbb{Z} / p^{\beta}
      \mathbb{Z})^*} 
    \xi_p(a)
    e_{p^\beta}(\overline{ q p^{-\beta}} a).
  \end{eqnarray}
  In the second step we made the substitution
  $(\mathbb{Z}/p^\beta \mathbb{Z})^* \ni a \mapsto c \overline{x q} a$.
  
  Suppose that (\ref{eq:sumpalpha}) is nonzero for each $p^\alpha || c_1$.
  Then $v_p(x)= v_p(c_1) - v_p(q)$ for each $p|c_1$, so that
  $x_1 q = c_1$, and we obtain
  \begin{eqnarray*}
    \sum_{a \in (\mathbb{Z} / c \mathbb{Z})^*}
    \xi(a) e_c(x a)
    = x_1 \xi  \left( \frac{ c_2}{x_2}
    \right) \tau( \xi) \sum_{d | (c_2,x_2)} d
    \mu \left( \frac{ c_2 }{ d} \right),
  \end{eqnarray*}
  as desired.  This completes the proof of Lemma
  \ref{lem:ramsum} when $x \neq 0$.  In the remaining case that
  $x = 0$, the sum (\ref{eq:ramsum}) vanishes unless $q = 1$, in
  which case it equals $\sum_{d |c_2} d \mu \left( \frac{c_2}{d}
  \right)$.  Given our conventions,
  this is what the lemma asserts.
\end{proof}

By (\ref{eq:ramsum}), we have $R_s^\delta(x) = 0$ unless there
exists a positive integer $c$ such that $N_1 | c_1$ and $c_1 =
x_1 q$, so let us assume henceforth that $N_1 | x_1 q$.  Then
\begin{eqnarray}\label{eq:Sdelta}
  R_s^\delta(x) &=&
  \frac{ i^{l}\chi(\delta) }{\eps_\delta(\delta ') \eps_{\delta '}^2(\delta)}
  \mathop{\sum_{N|c \geq 1}}_{(c,D)= \delta ' }
  (\eps_\delta |.|^{-2s})(c) \sum_{a \in (\mathbb{Z} / c
    \mathbb{Z})^*} 
  \xi(a) e_c(x a) \\ \nonumber
  &=& \frac{i^{l}\chi(\delta)}{\eps_\delta(\delta ') \eps_{\delta '}^2(\delta
    )}
  \sum_{
    \substack{
      N_2| c_2 \geq 1 \\
      (c_2,D) = \delta _2'
    }
  }
  ( \eps_\delta |.|^{-2s})(x_1 q c_2)
  x_1 \overline{\xi} (x_2)
  \xi (c_2)
  \tau(\xi )
  \sum_{d|(c_2,x_2)}
  d \mu \left( \frac{c_2}{d} \right) \\ \nonumber
  &=&
  \frac{
    i^{l} \chi(\delta)\tau(\xi)
    ( \eps_\delta |.|^{-2s})(q) (\eps_\delta |.|^{1-2s}) (x_1)
    \overline{\xi}(x_2)
  }{ \eps_\delta (\delta ') \eps_{\delta '}^2(\delta)}
  \sum_{M|c_2 \geq 1}
  (\eps_\delta \xi |.|^{-2s})(c_2)
  \sum_{d|(c_2,x_2)}
  d \mu \left(\frac{c_2}{d}\right). 
\end{eqnarray}
We evaluate the sums over $c_2, d$ in the following lemma,
where we set $\psi = \eps_\delta
\xi |.|^{-2s}$.
\begin{lemma} Provided that the following sums converge
  absolutely, we have
  \begin{equation}\label{eq:doublesum}
    \sum_{M|c_2 >0} \psi(c_2) \sum_{d|(c_2,x_2)} d \mu \left( \frac{ c_2 }{ d }
    \right)
    = \frac{ (\psi |.|^1)(M)}{L^M(\psi)} \sum_{e|(M,x_2)} \mu \left( \frac{ M
      }{ e } \right) \frac{ e }{ M } \sigma[{\psi |.|^1}] \left( \frac{
        x_2}{e} \right).
  \end{equation}

\end{lemma}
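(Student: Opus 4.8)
The plan is to reduce \eqref{eq:doublesum} to an elementary prime-by-prime verification. Write $\beta = \psi |.|^1$ for the completely multiplicative function $n \mapsto n\psi(n)$, and recall that the inner sum over $d$ on the left of \eqref{eq:doublesum} is the Ramanujan sum $c_{c_2}(x_2) = \sum_{d\mid(c_2,x_2)} d\,\mu(c_2/d)$, which is multiplicative in $c_2$ (its prime-power values having appeared already in the proof of Lemma \ref{lem:ramsum}). First I would substitute $c_2 = dk$ with $d\mid x_2$ and $k = c_2/d$: the factor $\mu(k)$ restricts $k$ to squarefree integers, the condition $M\mid c_2$ becomes $M\mid dk$, and complete multiplicativity of $\psi$ lets one factor out $\psi(d)$. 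This rewrites the left side as
\[
  \sum_{d\mid x_2} \beta(d)\,H(d), \qquad H(d) := \sum_{\substack{k\text{ squarefree}\\ M\mid dk}} \mu(k)\psi(k).
\]
One then evaluates $H(d)$ as an Euler product: for each prime $p$ the local factor is $\sum_{j\in\{0,1\},\, v_p(d)+j\ge v_p(M)}(-\psi(p))^j$, which equals $1-\psi(p)$ if $v_p(d)\ge v_p(M)$, equals $-\psi(p)$ if $v_p(d) = v_p(M)-1$, and vanishes if $v_p(d)\le v_p(M)-2$. Thus $H(d)=0$ unless $v_p(d)\ge v_p(M)-1$ for all $p$, the primes away from $M$ always contributing $\prod_{p\nmid M}(1-\psi(p)) = L^M(\psi)^{-1}$.

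Since $\beta(d)$ and $H(d)$ both factor over primes and the divisibility $d\mid x_2$ decouples prime by prime, the left side of \eqref{eq:doublesum} itself becomes an Euler product $\prod_p \mathcal{L}_p$, where $\mathcal{L}_p$ is a short sum in $\beta(p)$ and $\psi(p)$ depending only on $a := v_p(M)$ and $b := v_p(x_2)$. In parallel I would factor the right side as $\prod_p \mathcal{R}_p$: the factors $\beta(M)$ and $L^M(\psi)^{-1} = \prod_{p\nmid M}(1-\psi(p))$ are visibly multiplicative, while the sum over $e\mid(M,x_2)$ splits because $\mu(M/e)$, $e/M$, and $\sigma[\beta](x_2/e)$ (the last multiplicative since $\beta$ is) all factor prime by prime. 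It then remains to check $\mathcal{L}_p = \mathcal{R}_p$ for each $p$, which reduces to three elementary cases: (i) $a = 0$, where both sides equal $(1-\psi(p))\,\sigma[\beta](p^b)$; (ii) $a\ge 1$ and $b\le a-2$, where both sides vanish; (iii) $a\ge 1$ and $b\ge a-1$, where both sides are short sums of powers of $\beta(p)$ and $\psi(p)$ that one matches term by term using $\sigma[\beta](p^{j+1}) = \sigma[\beta](p^j) + \beta(p)^{j+1}$ together with $\beta(p) = p\,\psi(p)$. Reassembling the Euler products gives \eqref{eq:doublesum}; the degenerate case $x_2 = 0$ is handled identically, reading off $\sigma[\beta](0)$ via the regularization convention of \S\ref{sec:main-result} (or, if one prefers, by analytic continuation in $s$ from the region of absolute convergence).

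The conceptual content here is slight; the one point requiring care — and hence the main obstacle — is the bookkeeping in step (iii), where the cutoffs produced by the Ramanujan sum at prime powers (the lone $-\psi(p)$ contribution, and the vanishing once $v_p(d)\le v_p(M)-2$) must be kept synchronized with the divisor sum $\sigma[\beta]$ appearing on the right of \eqref{eq:doublesum}. All of the rearrangements above are legitimate under the standing hypothesis that the original sums converge absolutely.
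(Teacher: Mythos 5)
Your argument is correct, but it completes the proof differently from the paper. Both proofs begin with the same interchange of summation, writing the left side of \eqref{eq:doublesum} as $\sum_{d\mid x_2}\psi(d)d\sum_{k:\,M\mid dk}\mu(k)\psi(k)$ (using complete multiplicativity of $\psi$). From there the paper proceeds globally: it parametrizes the integers $c_2$ with $[M,d]\mid c_2$ by pairs $(e,l)$ with $e\mid (M,d)$ and $(l,e)=1$ via $c_2=\tfrac{Md}{e}l$, pulls out $\sum_{(l,M)=1}\mu(l)\psi(l)=L^M(\psi)^{-1}$, and reindexes $d=ed'$ to land exactly on the right side of \eqref{eq:doublesum}; this is a \emph{derivation} of the closed form. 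You instead evaluate the inner sum $H(d)$ by its Euler product, observe that both sides of \eqref{eq:doublesum} factor over primes, and \emph{verify} equality of local factors in three cases. Your case analysis checks out: for $p\nmid M$ both factors are $(1-\psi(p))\sigma[\psi|.|^1](p^{v_p(x_2)})$; for $p\mid M$ the M\"obius factor $\mu(p^{v_p(M)-j})$ kills the right side exactly when your cutoff $v_p(d)\le v_p(M)-2$ kills the left; and in the surviving range the identity $\sigma[\psi|.|^1](p^{m+1})=1+p\psi(p)\,\sigma[\psi|.|^1](p^{m})$ makes the two local factors agree (the one computation requiring care, which you flag). The trade-off is the expected one: the paper's resummation is shorter and explains where the right-hand side comes from, while your local verification is mechanical, easy to audit, and isolates the convergence issue cleanly --- $L^M(\psi)^{-1}$ appears as the tail $\prod_{p\nmid M}(1-\psi(p))$ of an absolutely convergent product, and the degenerate case $x_2=0$ is absorbed into the same framework via the regularization convention, consistent with the lemma's standing hypothesis of absolute convergence.
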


\begin{proof}
  Interchanging summation, we have
  \[
  S := \sum_{M|c_2} \psi(c_2) \sum_{d|(c_2,x_2)} d \mu \left( \frac{ c_2 }{ d }
  \right)
  = \sum_{d|x_2} d \sum _{[M,d]|c_2} \psi(c_2) \mu \left( \frac{ c_2 }{ d }
  \right),
  \]
  where $[a,b]$ is the least common multiple of $a$ and $b$.
  The maps
  \[
  c_2 \mapsto \left( \frac{ M d }{ (c_2 , M d ) } , \frac{ c_2 }{ ( c_2, M d )
    } \right),
  \quad
  (e,l) \mapsto \frac{ M d }{ e} l,
  \]
  give a bijection of sets of natural numbers
  \[
  \{ c_2 : [M,d] | c_2 \}
  \leftrightarrow
  \{
  (e,l) : e | (M,d), (l,e) = 1,
  \}
  \]
  so that
  \begin{eqnarray*}
    S &=& \sum_{d|x_2} d \sum_{e|(M,d)} \sum_{(l,e)=1} \psi \left( \frac{ M
        d }{ e} l \right) \mu \left( \frac{ M }{e} l \right)
    = \sum_{d|x_2} d \sum_{e|(M,d)} \psi \left( \frac{ M d}{e} \right)
    \mu \left( \frac{ M }{ e} \right)
    \mathop{\sum_{(l,e)=1}}_{(l,\frac{M}{e})=1} \psi(l) \mu(l) \\
    &=& \frac{1}{L^M(\psi)} \sum_{e|M} \sum_{d| \frac{x_2}{e}} d e \,
    \psi(M) \psi(d) \mu \left( \frac{ M }{ e} \right)
    = \frac{ (\psi |.|^1)(M) }{ L^M(\psi)} \sum_{e|M} \mu \left(
      \frac{ M}{e} \right)  \frac{ e}{M} \sigma[{\psi |.|^1}]
    \left( \frac{ x_2 }{ e
      } \right),
  \end{eqnarray*}
  as desired.
\end{proof}

Taking $\psi = \eps_\delta \xi |.|^{-2s}$ and
substituting (\ref{eq:doublesum}), (\ref{eq:Sdelta}),
(\ref{eq:sumDelta}) into (\ref{eq:plug-in-here}) gives
Theorem \ref{thm:general-kernel}
in the range $5/4 < \Re(s) < (k-1)/2$,
and hence in the stated range $(k-3)/2 < \Re(s) < (k-1)/2$ by
analytic continuation.
\begin{remark}
  There are other choices of coefficients $(b_n )$ that lead to
  interesting linear functionals $L_s = \sum b_n n^{-s} \lambda_n$.
  For instance, one could let $b_n$
  be the $n$th Fourier coefficient
  of a Maass cusp form,
  or one could take $b_n = \tau(n)$,
  so that $\zeta^{N}(2 s) L_s(f) = L(f,s)^2$
  for $f \in S_k(N)$.
  In the latter case, the functional equations for
  the additive twists \[\sum_{n \geq 1} \tau(n) e^{2 \pi i
    \alpha n} n^{-s}, \quad \alpha \in \mathbb{Q} \]
  follow (see e.g.
  \cite{KMV02})
  from the expansions
  \[
  \frac{ \partial }{ \partial s } E ( z , s ) |_{s = 1/2}
  = y^{1/2} \log y + 4 y^{1/2} \sum_{n \geq 1} \tau(n) K_0(2 \pi n y)
  \cos(2 \pi n x),
  \]
  \[
  \frac{-1}{2 \pi } \frac{\partial^2}{\partial x \partial s} E ( z , s ) |_{s = 1/2}
  = 4 y^{1/2} \sum_{n \geq 1} n \tau(n) K_0(2 \pi n y)
  \sin(2 \pi n x),
  \]
  where
  $E(z,s)$ is a
  real-analytic Eisenstein series for $SL(2,\mathbb{Z})$.
  However, an inspection of
  the proof of Theorem
  \ref{thm:finite-formula}
  shows that one does
  \emph{not} obtain a finite formula
  for $\mathbb{E}[\overline{\lambda_m} L_s]$
  in such cases
  (cf. the discussion surrounding \eqref{eq:26}).
  For this reason, we have restricted our attention to
  the case that the $b_n$ are Fourier coefficients
  of holomorphic forms.
\end{remark}

\subsection{Proofs of applications}
Recall the notation
\eqref{eq:3}, \eqref{eq:5}
for the twisted first moments
$\mathcal{M}_{s}(k,N,\chi,m,g)$,
$\mathcal{M}_{s}^\#(k,N,\chi,m,g)$.
Recall also the ``$\delta$-dependent'' notation
$\delta ', \eta$ mod $q$, $M$, $A = A_1 A_2$
introduced in \S \ref{sec:main-result}
and used in the statement of Theorem \ref{thm:finite-formula}.
\subsubsection{Real dihedral twists}
\label{sec:appl-real-dihedr}
\begin{theorem} \label{thm:realDihedral}
Preserve the assumptions \eqref{eq:assumptions}.
  Suppose that $k$  is
  even, $\chi = 1$, $l$ is odd, $\eps$ is primitive quadratic,
  $k > l$,
  and
  $(N,D) = 1$.  Suppose moreover that $g$ is a
  Hecke eigenform with $b_1 = 1$ and that $N,k$
  have been chosen so that for any form $f \in S_k(M) \subset
  S_k(N)$
  of strictly lower level $M | N$, $M \neq N$, we have
  $L(f \otimes g,\frac{1}{2}) = 0$.  For brevity
  write $x = m \delta - n$.
  Then
  \begin{eqnarray} \label{eq:dihedral-general}
    \mathcal{M}_{1/2}^\#(k,N,\chi,m,g)
    &=&
    \frac{b_m}{m^{1/2}}
    L(\eps,1)
    + \eps(-N)
    \overline{b_D^2}
    \frac{\overline{b_{m}}}{m^{1/2}}
    L(\eps,1) \\ \nonumber
    &+&
    2 \pi i^{l-k} \eps(N) \sum_{\delta|D}
    \frac{\tau(\eps_{\delta '})}{\delta '}
    \sum_{
      \substack{
        1 \leq n < m \delta \\ \nonumber
        N | (m \delta - n)
      }
    }
    \frac{b_n^\delta}{n^{1/2}}
    P_{1/2} \left( \frac{m \delta }{n} \right)
    \eps_\delta(x_1)
    \eps_{\delta '}(x_2)
    \sigma[\eps] \left( \frac{x_2}{N} \right).
  \end{eqnarray}
  Here $b_n^D = - i \overline{b_{n D}}$.
\end{theorem}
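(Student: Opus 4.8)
The plan is to deduce the formula by specializing Theorem~\ref{thm:real-simplification} to $s=1/2$ and then using the vanishing hypothesis twice: once to remove the auxiliary vector $v$ appearing there, and once to pass from the average over all cusp forms to the average over newforms.

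First I would set up the reduction. Since $\chi=1$ means $\chi=\chi_0\pmod N$, the conventions of \eqref{eq:4} give $L(\chi\eps,2s)=L^N(\eps,2s)$, so combining \eqref{eq:3} with Lemma~\ref{lem:obvious2} yields
\begin{equation*}
  \mathcal{M}_{1/2}(k,N,1,m,g)=L^N(\eps,1)\,L_{1/2}(\lambda_{m,N}^*).
\end{equation*}
Reading off the proof of Theorem~\ref{thm:real-simplification}, the vector $v$ there is a linear combination $\sum_{\otherN\mid N,\ \otherN\neq N}c_{\otherN}\,\lambda_{m,\otherN}^*$ of kernels at strictly lower level. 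For each such $\otherN$, expanding $L_{1/2}(\lambda_{m,\otherN}^*)=\sum_f\overline{\lambda_m(f)}\,L_{1/2}(f)/(f,f)$ over a basis of $S_k(\otherN)$ and using $L_{1/2}(f)=L(f\otimes g,\tfrac12)/L^{\otherN}(\eps,1)=0$ — the hypothesis, applied with $M=\otherN$ (note $L^{\otherN}(\eps,1)\neq0$) — gives $L_{1/2}(v)=0$. Hence Theorem~\ref{thm:real-simplification} at $s=1/2$ says exactly that $\mathcal{M}_{1/2}(k,N,1,m,g)$ equals the right-hand side of \eqref{eq:main2} evaluated at $s=1/2$.

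Next I would reduce to newforms. Writing $\lambda_{m,N}^*=P^{\mathrm{new}}+P^{\mathrm{old}}$ for the decomposition along $S_k(N)^{\mathrm{new}}\oplus S_k(N)^{\mathrm{old}}$, one has $\mathcal{M}_{1/2}^\#(k,N,1,m,g)=L^N(\eps,1)L_{1/2}(P^{\mathrm{new}})$, so it is enough to show $L_{1/2}$ annihilates $S_k(N)^{\mathrm{old}}$, which is spanned by the forms $V_dh$, $V_dh(z)=h(dz)$, with $h\in S_k(M)$, $M\mid N$, $M\neq N$, $d\mid N/M$. The hypothesis gives $L_{1/2}(h)=0$, so by induction on the number of prime factors of $d$ it suffices to treat $d=p$ prime. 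Since $(p,D)=1$ and $g$ is a Hecke eigenform, $b_{pj}=b_pb_j-\eps(p)b_{j/p}$, and a direct computation gives $L_{1/2}(V_ph)=p^{-k/2}\bigl(b_pL_{1/2}(h)-\eps(p)p^{-k/2}L_{1/2}(U_ph)\bigr)$ with $U_ph(z)=\sum_n a_{pn}(h)q^n$. If the level of $U_ph$ properly divides $N$ then the hypothesis kills $L_{1/2}(U_ph)$; the only remaining case is $p\nmid M$ with $Mp=N$, where the operator identity $U_p=T_p-p^{k-1}V_p$ on $S_k(M)$ (with $T_ph\in S_k(M)$ of strictly lower level, so $L_{1/2}(T_ph)=0$) yields $L_{1/2}(V_ph)=\eps(p)p^{-1}L_{1/2}(V_ph)$, forcing $L_{1/2}(V_ph)=0$. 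Thus $L_{1/2}(P^{\mathrm{old}})=0$ and $\mathcal{M}_{1/2}^\#(k,N,1,m,g)=\mathcal{M}_{1/2}(k,N,1,m,g)$.

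Finally I would evaluate the right-hand side of \eqref{eq:main2} at $s=1/2$. Here $2s=1$ has the parity of $k\pm l$ and $1-(k-l)/2\le\tfrac12\le(k-l)/2$, so — exactly as in the deduction of Theorem~\ref{thm:finite-formula} from Theorem~\ref{thm:general-kernel} — $I_{1/2}(y)=0$ for $0<y<1$ (the factor $1/\Gamma(\tfrac{l-k}{2}+s)$ in $I_s$ vanishes at $s=\tfrac12$ when $k>l$), whence the $n$-sum truncates to $1\le n\le m\delta$. Specializing the explicit data ($(\eps|.|^{1-2s})(N)=\eps(N)$, $\sigma[\eps|.|^{1-2s}]=\sigma[\eps]$, $\overline{\eps_{\delta'}}=\eps_{\delta'}$ since $\eps$ is quadratic, and $\tau(\eps_1)=1$, $\eps_1\equiv1$ for the cusp $\delta=D$), the terms with $1\le n<m\delta$ assemble into the displayed sum of \eqref{eq:dihedral-general}. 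For $\delta<D$ the boundary term $n=m\delta$ vanishes because $\overline{\eps_{\delta'}}(0)=0$ ($\eps_{\delta'}$ is primitive nontrivial); the sole surviving boundary term is $\delta=D$, $n=mD$, for which $I_{1/2}(1)=i^{l-k+1}/2$, $\sigma[\eps](0)=L(\eps,0)$, and $b_{mD}^D=\eps(-1)\tau(\eps)D^{-1/2}\overline{b_Db_{mD}}=-i\,\overline{b_D^2\,b_m}$ by Theorem~\ref{thm:eta-value} (using that, $g$ being a newform of primitive nebentypus at squarefree level, $b_{mD}=b_mb_D$). Applying $\tau(\eps)^2=\eps(-1)D$, the functional equation $L(\eps,0)=(\pi i)^{-1}\tau(\eps)L(\eps,1)$, $i^{2(l-k)+1}=-i$, and $\eps(-N)=-\eps(N)$ (as $l$ is odd), this boundary term collapses to $\eps(-N)\overline{b_D^2}\,\overline{b_m}\,m^{-1/2}L(\eps,1)$; combined with the leading term $b_mm^{-1/2}L(\eps,1)$ arising from $b_mm^{-s}L(\eps,2s)$, this is precisely \eqref{eq:dihedral-general}, and $b_n^D=-i\,\overline{b_{nD}}$ drops out along the way. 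The main obstacle is the reduction to newforms: the hypothesis as stated only controls forms literally of strictly lower level, so one genuinely needs the Hecke-relation bootstrap above (or, as in the applications where $\eps(-N)=1$, a functional-equation sign argument) to see that $L_{1/2}$ kills all of $S_k(N)^{\mathrm{old}}$; the rest is bookkeeping with Gauss sums and the sign conventions in the $n=m\delta$ term.
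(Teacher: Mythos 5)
Your proposal is correct and follows the same route as the paper: specialize Theorem \ref{thm:real-simplification} at $s=1/2$, use the vanishing hypothesis to discard $v$ and the oldform contributions, truncate the $n$-sum via the vanishing of $1/\Gamma(\tfrac{l-k}{2}+s)$ at $s=\tfrac12$, and evaluate the sole surviving boundary term ($\delta=D$, $n=mD$) using Theorem \ref{thm:eta-value}, $I_{1/2}(1)=i^{l-k+1}/2$, $\tau(\eps)^2=\eps(-1)D$, and the functional equation $\pi i\,L(\eps,0)=\tau(\eps)L(\overline{\eps},1)$. The one point where you go beyond the paper is the reduction to newforms: the paper merely asserts that $L_{1/2}$ kills all oldforms, whereas the stated hypothesis literally covers only forms lying in $S_k(\otherN)$ for a proper divisor $\otherN\mid N$ and not the images $V_dh$ with $dM=N$, so your Hecke-relation bootstrap $L_{1/2}(V_ph)=\eps(p)p^{-1}L_{1/2}(V_ph)$ is a legitimate (and needed) patch rather than a deviation.
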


\begin{proof}
  Under the given conditions, we have $L_{1/2}(v) = 0$ for all
  oldforms $v \in V$,
  so that
  Theorem \ref{thm:real-simplification}
  implies that $\mathcal{M}_{1/2}^\#(k,N,\chi,m,g)$ is equal to
  the sum of 
  \[
  \frac{b_m}{m^{1/2}} L(\eps,1)
  + 2 \pi i^{-k}
  \eps(N)
  \frac{i^l b_{m D}^D}{(m D)^{1/2}}
  I_{1/2}(1) L(\eps,0)
  \]
  and the second line of (\ref{eq:dihedral-general}).
  We have $I_{1/2}(1) =
  i^{l-k+1}/2$ and $b_{m
    D}^D = - \tau(\eps) \overline{b_{m D^2}} D^{-1/2}$
  (see Theorem \ref{thm:eta-value}).
  The functional equation
  $\pi i L(\eps,0) = \tau(\eps) L(\overline{\eps},1)$
  shows that
  \[
  2 \pi i^{-k} \eps(N) \frac{i^l b_{m D}^D}{(m D)^{1/2}}
  I_{1/2}(1) L(\eps,0) = \frac{\tau(\eps)^2}{D} \eps(N)
  \frac{\overline{b_{m D^2}}}{m^{1/2}}
  L(\overline{\eps},1).
  \]
  We have $\tau(\eps)^2 = D \eps(-1)$ and $\overline{\eps} = \eps$ since $\eps$ is primitive
  quadratic, so the formula (\ref{eq:dihedral-general})
  follows.
\end{proof}

\begin{corollary} \label{cor:realDihedral} With conditions
  and assumptions as in Theorem \ref{thm:realDihedral},
  suppose furthermore that $N > m D$.
  Then
  \begin{equation} \label{eq:dihedral}
    \mathcal{M}_{1/2}^\#(k,N,\chi,m,g)
    =
    \frac{b_m}{m^{1/2}}
    L(\eps,1)
    + \eps(-N)
    \overline{b_D^2}
    \frac{\overline{b_{m}}}{m^{1/2}}
    L(\eps,1).
  \end{equation}
\end{corollary}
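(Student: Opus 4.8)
The plan is to obtain Corollary \ref{cor:realDihedral} as an immediate specialization of Theorem \ref{thm:realDihedral}: the extra hypothesis $N > mD$ kills the entire ``error term'' on the second and third lines of \eqref{eq:dihedral-general}, leaving only the two main terms, which is exactly the assertion \eqref{eq:dihedral}. This is the ``stable range'' phenomenon recalled as assertion (II) in the introduction and flagged in Remark \ref{rmk:why-is-this-theorem-relevant}.

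First I would invoke Theorem \ref{thm:realDihedral}, whose hypotheses are inherited verbatim under the assumptions of Corollary \ref{cor:realDihedral}, to express $\mathcal{M}_{1/2}^\#(k,N,\chi,m,g)$ as the sum of the two terms $\frac{b_m}{m^{1/2}} L(\eps,1) + \eps(-N)\overline{b_D^2}\frac{\overline{b_m}}{m^{1/2}} L(\eps,1)$ on the first line of \eqref{eq:dihedral-general} together with the double sum on the remaining lines, whose inner sum ranges over integers $n$ with $1 \le n < m\delta$ subject to the congruence condition $N \mid (m\delta - n)$.

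Next I would observe that, once $N > mD$, this inner sum is empty for every divisor $\delta$ of $D$. Indeed, fix such a $\delta$ and an admissible index $n$, so that $1 \le n < m\delta$. Then $m\delta - n \ge 1$ (the two integers satisfy a strict inequality), while $m\delta - n < m\delta \le mD < N$ since $\delta \le D$ and $N > mD$ by hypothesis. Hence $0 < m\delta - n < N$, so $N \nmid (m\delta - n)$ and no term of the inner sum survives. Consequently the whole double sum on the second and third lines of \eqref{eq:dihedral-general} vanishes, and \eqref{eq:dihedral} follows.

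There is no real obstacle here: the substantive work is entirely contained in Theorem \ref{thm:realDihedral}, and what remains is the elementary divisibility observation above. The only point requiring a moment's care is to combine the strict inequality $n < m\delta$ in the summation range with $\delta \le D$ and $mD < N$ so as to confine $m\delta - n$ strictly between $0$ and $N$; with that in hand the conclusion is immediate.
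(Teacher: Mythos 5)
Your proposal is correct and matches the paper's proof, which simply observes that the sum over $n$ in \eqref{eq:dihedral-general} is empty when $N > mD$; you have merely spelled out the elementary inequality $0 < m\delta - n < m\delta \le mD < N$ that makes the divisibility condition $N \mid (m\delta - n)$ unsatisfiable.
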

\begin{proof}
  The sum over $n$ in (\ref{eq:dihedral-general})
  is empty when $N > m D$.
\end{proof}

\begin{proof}[Proof of Theorem \ref{thm:real-concrete}]
  Let $g = g_\xi \in S_1(D',\eps)$, $D' = D \cdot
  N_{K/\mathbb{Q}}(\mathfrak{m})$ be as in the statement of Theorem
  \ref{thm:real-concrete}.  Then $g$ is a normalized Hecke
  eigenform because $\xi$ is a character and a newform because
  $\xi$ (or $\eps$) is primitive.  Since $\eps$ is primitive and
  $g$ is a normalized newform, Theorem \ref{thm:eta-value}
  implies that $|b_D| = 1$.
  Since $\eps$ is quadratic, we have
  $\eps = \overline{\eps}$.
  Since $b_D \in \mathbb{R}$, we have $\overline{b_D^2} = 1$.
  Since by assumption $\eps(-N) = 1$,
  we see that Theorem
  \ref{thm:real-concrete} follows from (\ref{eq:dihedral})
  provided that we can justify the hypothesis that
  $L(f \otimes g, 1/2) = 0$ for all forms
  $f \in S_k(N)$ coming from a lower level.
  Since $N$ is prime, the only possibility is
  that $f \in S_k(1)$.
  We may assume by linearity that $f$ is a normalized Hecke eigenform,
  so that in particular $\overline{f} = f$, $\overline{g} = g$.
  Then \cite[Theorem 2.2, Example 2]{Li79}
  shows that
  \begin{equation}\label{eq:6b}
    L(f \otimes g,s)
    = \eps(f \otimes g)
    (1 ^2 D^2)^{1/2-s} \frac{\gamma(1-s)}{\gamma(s)}
    L(f \otimes g, 1- s),
  \end{equation}
  where
  \[
  \gamma(s) = 
  \Gamma_\mathbb{C}
  \left( s + \frac{|k-l|}{2} \right)
  \Gamma_\mathbb{C} 
  \left( s + \frac{k+l}{2}-1 \right),
  \quad
  \Gamma_\mathbb{C}(s) = 2(2 \pi)^{-s} \Gamma(s),
  \]
  and
  \[
  \eps(f \otimes g)
  =
  \begin{cases}
    \eps(-1) \chi(D) \eps(1) \eta(f)^2 \eta(g)^2 &  k \leq l \\
    \chi(-1) \chi(D) \eps(1) \eta(f)^2 \eta(g)^2 & k > l,
  \end{cases}
  \]
  with $\eta(f), \eta(g)$ defined by
  $f|W_1 = (-1)^k \eta(f) \overline{f}$
  and $g|W_D = (-1)^l \eta(g) \overline{g}$;
  here $W_1,W_D$ are the Fricke involutions as defined in
  \S\ref{sec:main-result}.
  Since $\eps$ is primitive quadratic and $b_D \in \{\pm 1\}$, Theorem
  \ref{thm:eta-value}
  implies that $\eta(g)^2 = \eps(-1)$.
  Since $f$ has trivial level, we have $\eta(f) = 1$.
  Since $k > l$ and $\chi = 1$, we see that
  $\eps(f \otimes g) = \eps(-1) = -1$.
  Evaluating the functional equation (\ref{eq:6b}) at the point
  $s = 1/2$ gives $L(f \otimes g,1/2) = 0$, as desired.
\end{proof}

\begin{remark}
  Note that the argument just given applies also to cuspidal
  imaginary quadratic theta series (in which context our 
  calculation of $b_{m D}^D$ remains valid), so we have
  recovered the cuspidal case of the original stability result
  of \cite{MR07}.
\end{remark}

\subsubsection{Vertical stability}
\label{sec:appl-vert-stab}
\begin{theorem}\label{thm:vert-stab}
Preserve the assumptions \eqref{eq:assumptions}.
  Suppose that $k > l$, $k - l \equiv 1 \pmod{2}$, and that $N$
  is chosen so that for each divisor $\delta |D$
  with $(\delta, N) = 1$
  we have $N_1 / (N_1,q) \geq \max(m \delta,2)$.
  Then $\mathcal{M}_{1/2}(k,N,\chi,m,g)
  = L(\chi \eps,1)
  b_m m^{-1/2}$.
\end{theorem}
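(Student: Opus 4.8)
The plan is to invoke Theorem \ref{thm:finite-formula} at the point $s = \tfrac{1}{2}$ and to show that all terms of the resulting double sum over $\delta$ and $n$ vanish. First I would check that the hypotheses of Theorem \ref{thm:finite-formula} apply: since $k - l$ is odd, $2s = 1$ has the same parity as $k \pm l$, and since $k > l$ with $k - l$ odd we have $k - l \geq 1$, whence $1 - \tfrac{k-l}{2} \leq \tfrac{1}{2} \leq \tfrac{k-l}{2}$; note also that $\chi\eps(-1) = (-1)^{k+l} = -1$, so $\chi\eps$ is nonprincipal and $L(\chi\eps,1)$ is finite. Theorem \ref{thm:finite-formula} then yields
\[
  \mathcal{M}_{1/2}(k,N,\chi,m,g)
  = L(\chi\eps,1)\,\frac{b_m}{m^{1/2}}
  + 2\pi i^{-k}\sum_{\delta\mid D} T_{1/2}^\delta
    \mathop{\sum_{n=1}^{m\delta}}_{N_1 \mid (m\delta - n)_1 q}
    \frac{b_n^\delta}{n^{1/2}}\,
    I_{1/2}\!\left(\frac{m\delta}{n}\right)
    S_{1/2}^\delta(m\delta - n),
\]
and it remains to prove that the inner sum over $n$ is zero for every divisor $\delta \mid D$.

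I would dispose of the divisors $\delta \mid D$ with $(\delta, N) > 1$ first: $\chi$ is a Dirichlet character modulo $N$, so $\chi(\delta) = 0$, and since $T_s^\delta$ carries the factor $\chi(\delta)$ while $s \mapsto T_s^\delta$ is entire, we get $T_{1/2}^\delta = 0$. This reduces matters to the divisors $\delta \mid D$ with $(\delta, N) = 1$, which are exactly those covered by the hypothesis $N_1/(N_1,q) \geq \max(m\delta, 2)$ (recall $N_1$ and $q$ both depend on $\delta$).

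For such a $\delta$, the point is to show that \emph{no} index $n$ with $1 \leq n \leq m\delta$ satisfies the constraint $N_1 \mid (m\delta - n)_1 q$ from \eqref{eq:7}, so that the inner sum is empty. For $1 \leq n < m\delta$, set $x = m\delta - n$, a positive integer $\leq m\delta - 1$; working prime by prime over $p \mid q$ (the common support of $q$ and $N_1$), the constraint $N_1 \mid x_1 q$ forces $v_p(x) \geq \max(0, v_p(N) - v_p(q))$, and since $x_1 = \prod_{p \mid q} p^{v_p(x)}$ while $N_1/(N_1,q) = \prod_{p \mid q} p^{\max(0,\,v_p(N) - v_p(q))}$, this gives $x_1 \geq N_1/(N_1,q) \geq m\delta$; but $x_1$ divides $x \leq m\delta - 1 < m\delta$, a contradiction. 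The boundary case $n = m\delta$ has $(m\delta - n)_1 = 1$ by convention, so the constraint becomes $N_1 \mid q$, i.e. $N_1/(N_1,q) = 1$, which is excluded by $N_1/(N_1,q) \geq 2$ (alternatively $N_1/(N_1,q) \geq 2$ forces $q \neq 1$, hence $\xi$ nonprincipal and $S_{1/2}^\delta(0) = 0$, so that term vanishes anyway). Thus the inner sum is empty for each $\delta$ coprime to $N$, the double sum vanishes, and the theorem follows.

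The only real work here is bookkeeping — keeping straight that $N_1$, $N_2$, $q$, $M$, and the factorization $m\delta - n = (m\delta - n)_1 (m\delta - n)_2$ all depend on $\delta$, and matching the hypothesis on $N_1/(N_1,q)$ against the divisibility condition in \eqref{eq:7}. There is no analytic difficulty: given Theorem \ref{thm:finite-formula}, the argument is an elementary finite divisibility check.
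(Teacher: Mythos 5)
Your proof is correct and follows the same route as the paper's: apply Theorem \ref{thm:finite-formula} at $s=\tfrac{1}{2}$, kill the divisors $\delta$ with $(\delta,N)>1$ via $\chi(\delta)=0$ in $T_{1/2}^\delta$, and show the inner sum over $n$ contributes nothing for the remaining $\delta$ using the constraint $N_1\mid(m\delta-n)_1 q$ together with the convention $\xi(0)=0$ when $q>1$. Your prime-by-prime verification that $x_1\geq N_1/(N_1,q)$ is exactly the divisibility check the paper leaves implicit, and your two alternative treatments of the boundary term $n=m\delta$ both match (the paper uses the $S_{1/2}^\delta(0)=0$ one).
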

\begin{proof}
  Fix a divisor $\delta$ of $D$.
  If $(\delta,N) > 1$,
  then $\chi(\delta) = 0$,
  so $T_{1/2}^\delta = 0$.
  If $N_1/(N_1,q) \geq m
  \delta$, then the sum over $n$ in
  \eqref{eq:7}
  is empty
  with the possible exception of the term indexed by $n = m \delta$, which
  vanishes because $N_1 / (N_1,q) \geq \max(m \delta,2)$ implies $q > 1$
  implies $\eta(0) = 0$, so that $S_s^\delta(m \delta - m
  \delta) = 0$.
  Thus the claim follows from
  Theorem \ref{thm:finite-formula}.
\end{proof}

\begin{theorem}\label{sec:vert-stab-2}
Preserve the assumptions \eqref{eq:assumptions}.
  Suppose that $k$ is even, $\chi = 1$,
  $l$ is odd, $\eps$ is primitive,
  and $k > l$.
  Let $(N,D^\infty) = \lim_{\alpha \to \infty}
  (N,D^\alpha)$,
  and suppose that
  \[
  \frac{(N,D^\infty)}{(N,D)} \geq \max(m D,2).
  \]
  Then $\mathcal{M}_{1/2}(k,N,\chi,m,g) =
  L^N(\eps,1) b_m m^{-1/2}$.
\end{theorem}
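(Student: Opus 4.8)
The plan is to deduce this from Theorem~\ref{thm:vert-stab}, whose conclusion at the point $s=1/2$ is precisely $\mathcal{M}_{1/2}(k,N,\chi,m,g) = L(\chi\eps,1)\,b_m m^{-1/2}$. First I would record the two easy points: the parity condition $k-l\equiv 1\pmod 2$ required by that theorem holds since $k$ is even and $l$ is odd, and $L(\chi\eps,1)=L^N(\eps,1)$ because $\chi$ is the principal character modulo $N$ (recall the discussion following~\eqref{eq:4}). So everything reduces to verifying the arithmetic hypothesis of Theorem~\ref{thm:vert-stab}: that $N_1/(N_1,q)\ge\max(m\delta,2)$ for every divisor $\delta\mid D$ with $(\delta,N)=1$, where $q$ and $N_1$ carry their usual $\delta$-dependent meanings.

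The heart of the argument is to make $q$, $N_1$ and $(N_1,q)$ explicit for such $\delta$. Since $\chi$ is principal modulo $N$ and $\eps$ is primitive modulo the squarefree integer $D$, the factor $\eps_{\delta'}$ is primitive modulo $\delta'=D/\delta$, and $\chi\eps_{\delta'}$ agrees with $\eps_{\delta'}$ on $(\mathbb{Z}/[N,\delta']\mathbb{Z})^*$; hence the primitive character $\xi$ inducing $\chi\eps_{\delta'}$ is $\eps_{\delta'}$ itself, so $q=\delta'$. Next I would observe that the coprimality $(\delta,N)=1$ forces every prime dividing both $N$ and $D$ to divide $\delta'$, since such a prime cannot divide $\delta$. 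Thus the primes dividing both $N$ and $q=\delta'$ are exactly those dividing $(N,D)$, so $N_1=\prod_{p\mid(N,D)}p^{v_p(N)}=(N,D^\infty)$, while $(N_1,q)=(N_1,\delta')=\prod_{p\mid(N,D)}p=(N,D)$, using that $\delta'$ and $(N,D)$ are squarefree. Therefore $N_1/(N_1,q)=(N,D^\infty)/(N,D)$, uniformly in $\delta$.

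To finish, note that since $\delta\mid D$ we have $\max(m\delta,2)\le\max(mD,2)$, so the standing hypothesis $(N,D^\infty)/(N,D)\ge\max(mD,2)$ supplies exactly the inequality $N_1/(N_1,q)\ge\max(m\delta,2)$ needed by Theorem~\ref{thm:vert-stab} for each admissible $\delta$. Invoking that theorem and rewriting $L(\chi\eps,1)=L^N(\eps,1)$ gives the claimed identity $\mathcal{M}_{1/2}(k,N,\chi,m,g)=L^N(\eps,1)\,b_m m^{-1/2}$.

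The step I expect to require the most care is the prime-by-prime identification of $N_1$ with $(N,D^\infty)$ and of $(N_1,q)$ with $(N,D)$, valid simultaneously for all relevant $\delta$: the whole point is that $(\delta,N)=1$ pushes every common prime of $N$ and $D$ into $\delta'$, and that squarefreeness of $D$ collapses the various greatest common divisors to their radicals. Once that bookkeeping is in place, the result is an immediate corollary of Theorem~\ref{thm:vert-stab}.
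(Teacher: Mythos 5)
Your proposal is correct and follows essentially the same route as the paper: verify the hypothesis of Theorem~\ref{thm:vert-stab} by identifying $q=\delta'$, $N_1=(N,D^\infty)$ and $(N_1,q)=(N,\delta')=(N,D)$ for each $\delta\mid D$ with $(\delta,N)=1$, then conclude. Your write-up just makes the prime-by-prime bookkeeping (and the conversion $L(\chi\eps,1)=L^N(\eps,1)$) more explicit than the paper does.
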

\begin{proof}
  Let $\delta$ be a divisor of $D$ for which $(\delta,N) = 1$.
  Then $q = \delta'$ and $(N,D) = (N, \delta ') = (N,q)
  = (N_1,q)$, $N_1 = (N,D^\infty)$, so that
  $N_1/(N_1,q) = (N,D^\infty)/(N,D) \geq
  \max(m D,2) \geq \max(m \delta,2)$; the claim then follows from
  the criterion of Theorem \ref{thm:vert-stab}.
\end{proof}

\begin{proof}[Proof of Theorem \ref{thm:stab-vert-sense}]
  The conditions of Theorem \ref{sec:vert-stab-2} are satisfied
  when there exists a prime divisor $p$ of $D$
  and $\alpha \geq 1$ for which
  $p^{\alpha+1} | N$ and $p^{\alpha} \geq m D$.
\end{proof}

\bibliography{refs}{}
\bibliographystyle{plain}
\end{document}